\documentclass[twoside]{article}


\usepackage[margin=2cm]{geometry}
\usepackage{amsmath,amsthm,amssymb,latexsym}
\usepackage[v2,tips,curve]{xy}

\theoremstyle{plain}
\newtheorem{proposition}{Proposition}[section]
\newtheorem{theorem}[proposition]{Theorem}
\newtheorem{corollary}[proposition]{Corollary}
\newtheorem{lemma}[proposition]{Lemma}
\theoremstyle{definition}
\newtheorem{definition}[proposition]{Definition}
\newtheorem{remark}[proposition]{Remark}
\newtheorem{example}[proposition]{Example}
\newtheorem{conjecture}[proposition]{Conjecture}

\newcommand{\vnten}{{\overline\otimes}}
\newcommand{\mc}{\mathcal}
\newcommand{\mf}{\mathfrak}

\newcommand{\G}{\mathbb{G}}
\renewcommand{\H}{\mathbb{H}}
\newcommand{\K}{\mathbb{K}}

\newcommand{\ip}[2]{\langle{#1},{#2}\rangle}
\newcommand{\op}{{\operatorname{op}}}
\newcommand{\sap}{{\operatorname{SAP}}}
\newcommand{\kac}{{\operatorname{KAC}}}
\newcommand{\lin}{\operatorname{lin}}
\newcommand{\proten}{\widehat\otimes}
\newcommand{\CAP}{\operatorname{CAP}}

\begin{document}

\large
\title{Remarks on the Quantum Bohr Compactification}
\author{Matthew Daws}
\maketitle

\begin{abstract}
The category of locally compact quantum groups can be described as either
Hopf $*$-homomorphisms between universal quantum groups, or as bicharacters
on reduced quantum groups.  We show how So{\l}tan's quantum Bohr compactification
can be used to construct a ``compactification'' in this category.
Depending on the
viewpoint, different C$^*$-algebraic compact quantum groups are produced, but
the underlying Hopf $*$-algebras are always, canonically, the same.  We show
that a complicated range of behaviours, with C$^*$-completions between the
reduced and universal level, can occur even in the cocommutative case, thus
answering a question of So{\l}tan.  We also study such compactifications from
the perspective of (almost) periodic functions.  We give a definition of a
periodic element in $L^\infty(\G)$, involving the antipode, which allows one
to compute the Hopf $*$-algebra of the compactification of $\G$; we later study
when the antipode assumption can be dropped.  In the
cocommutative case we make a detailed study of Runde's notion of a completely
almost periodic functional-- with a slight strengthening, we show that for
[SIN] groups this does recover the Bohr compactification of $\hat G$.

2010 \emph{Mathematics Subject Classification:}
   Primary 43A60, 46L89; Secondary 22D25, 43A20, 43A30, 43A95, 47L25.

\emph{Keywords:} Bohr compactification, Completely almost periodic functional,
Locally compact quantum group.

\end{abstract}

\section{Introduction}

The Bohr, or strongly almost periodic, compactification of a topological group
$G$ is the maximal compact group $G^\sap$ containing a dense homomorphic image
of $G$.  One can construct $G^\sap$ by looking at the finite-dimensional
unitary representation theory of $G$, but when $G$ is locally compact, there
is an intriguing link with Banach and C$^*$-algebra theory.  Let $AP(G)$
denote the collection of $f\in C^b(G)$ whose orbits, under the left- or
right-translation actions of $G$ on $C^b(G)$, form relatively compact subsets
of $C^b(G)$ (the collection of almost periodic functions).
Then $AP(G)$ is a commutative unital C$^*$-algebra, the character
space is $G^\sap$, and the group structure of $G^\sap$ can be ``lifted'' from
the group structure of $G$.  In this picture, we never go near representation
theory!

In the framework of noncommutative topology, one replaces spaces by algebras--
we think of $G$ as being represented by $C_0(G)$, and the product on $G$ being
given by a coproduct $\Delta:C_0(G)\rightarrow M(C_0(G)\otimes C_0(G))
= C^b(G\times G); \Delta(f)(s,t) = f(st)$.  Then $\Delta$ is coassociative,
and one can think of a ``quantum semigroup'' (or, more prosaically, a
C$^*$-bialgebra) as being a pair $(A,\Delta)$ where $A$ need no longer be
commutative.  When $A$ is unital, and we have the ``cancellation'' conditions
that
\[ \lin\{ \Delta(a)(b\otimes 1) : a,b\in A\}, \quad
\lin\{ \Delta(a)(1\otimes b) : a,b\in A\} \]
are dense in $A\otimes A$, then we have a compact quantum group.
The pioneering work of Woronowicz, \cite{woro2}, shows that such objects have
a remarkable amount of structure, and generalise completely the theory of
compact groups.  So{\l}tan in \cite{soltan} studied how to ``compactify'' a
C$^*$-bialgebra, and produced a very satisfactory theory, very much paralleling
(and generalising) the representation--theoretic approach to constructing
the classical Bohr compactification.

Going back to a locally compact group $G$, more abstractly, we can work with
the convolution algebra $L^1(G)$, turn the dual space $L^1(G)^* = L^\infty(G)$
into an $L^1(G)$-bimodule, and look at the functionals
$F\in L^\infty(G)$ such that the orbit map
$L^1(G)\rightarrow L^\infty(G); a\mapsto a\cdot F$ is a compact linear map.
Then we also recover $AP(G) \subseteq C^b(G) \subseteq L^\infty(G)$.
This theory has been generalised to general Banach algebras, and in particular
to the Fourier algebra (firstly in \cite{dr1}).  However, links here with any
notion of a ``compactification'' are very tentative.

In this paper, we have two major goals, both centred around understanding
further So{\l}tan's construction as applied to locally compact quantum groups.
These are C$^*$-bialgebras with additional, ``group-like'', structure.
Firstly, in a category theoretic sense, we have the inclusion functor from the
category of compact groups to the category of (say) locally compact groups.
The Bohr compactification is the universal arrow to this functor
(see Section~\ref{sec:cat} below).  Building on work of Kustermans and Ng,
the recent paper \cite{mrw} gave a very satisfactory picture for what
morphisms between locally compact quantum groups should be.  In 
Section~\ref{sec:comp_lcqg} we show how to construct a compactification
as a ``universal object'' in this category, see
Proposition~\ref{prop:compact_lcqg}.  A major technical stumbling
block is that we think of a single locally compact quantum group as being
represented by a number of different algebras, this paralleling the fact
that for a non-amenable $G$, the universal group C$^*$-algebra $C^*(G)$, and
the reduced algebra $C^*_r(G)$, are different.  Working at the ``universal''
level, the morphisms for locally compact quantum groups are just Hopf
$*$-homomorphisms, but So{\l}tan's construction may fail to give a universal
compact quantum group.  Similarly, compactifying at the reduced level 
may give a different compact quantum group, but we show that the underlying
Hopf $*$-algebras are always the same (in a canonical way), see
Proposition~\ref{prop:same_red_uni}.

In Section~\ref{sec:ba_sec} we study our other major goal, and look at how
the quantum Bohr compactification could be constructed without reference to
(co)representations (thus paralleling the ``almost periodic'' construction
of the classical Bohr compactification).  For a locally compact quantum
group $\G$, the philosophy is that the ``group structure'' of $\G$ should be
enough to allow us to construct the compactification $\G^\sap$ without 
explicitly looking at representations.  We define a notion of a ``periodic''
element, and show how to recover this from just knowledge of the convolution
algebra $L^1(\G)$, see Proposition~\ref{prop:one}.  We then show that for
Kac algebras, or under a further hypothesis involving the antipode, this
notion of periodic element allows one to construct $\G^\sap$, see
Section~\ref{sec:pap}.

In Section~\ref{sec:cocomcase} we study the Fourier algebra is further detail.
In \cite{runde} Runde used Operator Space theory to define the notion of
a ``completely almost periodic functional''.  Under an injectivity hypotheses,
we end up looking at the C$^*$-algebra
\[ \{ x\in L^\infty(\G) : \Delta(x) \in L^\infty(\G)
\otimes L^\infty(\G) \}, \]
where $\otimes$ here denotes the C$^*$-algebraic spacial tensor product.
In the fully quantum case, we show in Section~\ref{sec:e2} that the quantum
$E(2)$ group gives an example to show that there is little hope of such a
definition capturing the Bohr compactification.  However, in
Theorem~\ref{thm:cap_cmpt_kac_case} we show, in particular, that for a
discrete group $G$ this definition, when applied to the Fourier algebra
$A(G)$, does recover $C^*_r(G)$ as we might hope; for the classical almost
periodic definition, this was only known in the amenable case.
We then study [SIN] groups, and show that a slight further strengthening of
Runde's definition does allow us to recover the quantum Bohr compactification,
see Theorem~\ref{thm:cap_partial_case}.

Finally, in Section~\ref{sec:egs}, we study further examples.  By looking
again at the Fourier algebra, we answer (negatively) some conjectures of
So{\l}tan, showing in particular that finding the quantum Bohr compactification
of $C^*(G)$ and $C^*_r(G)$ may yield different completions of the same
underlying Hopf $*$-algebra, and that even for the reduced $C^*_r(G)$, the
resulting compact quantum group might fail to be itself reduced.  This also
shows that we did indeed need to be careful in Section~\ref{sec:comp_lcqg}!
In the special cases of discrete and compact quantum groups, we show how
the ``extra hypotheses'' which appeared in previous sections can be removed.

We start the paper in Section~\ref{sec:cat} with an introduction to the
quantum groups we are interested in, and the categories they form.  We
finish the paper with some open problems.

\subsection{Acknowledgements}

We thank Yemon Choi for helpful comments on Section~\ref{sec:cat},
Biswarup Das for useful commments on an early version of this article,
Piotr So{\l}tan for bringing \cite{woro4} to our attention, and the anonymous
referee for careful proof-reading.
The author was partly supported by EPSRC grant EP/I026819/1.

\section{Categories}\label{sec:cat}

We take a slightly general approach to compactifications.
Let $\mf B$ be a category, and let $\mf C$ be a full subcategory of $\mf B$.
We shall think of the objects of $\mf C$ as being the ``compact'' objects
of $\mf B$ (but be aware that this has nothing to do with the, somewhat more
specific, category--theoretic notion of a ``compact object'').  Given an
object $B\in\mf B$, a ``compactification'' of $B$ is an object $C\in\mf C$
and an arrow $B\overset{f}{\rightarrow}C$ which satisfies the following
universal property: for any $C'\in\mf C$ and any arrow $B\rightarrow C'$, then
there is a \emph{unique} arrow $C\rightarrow C'$ making the diagram commute:
\[ \xymatrix{ B \ar[r] \ar[rd] & C' \\ & C \ar[u]_{!} } \]
In particular, taking $C=C'$, uniqueness ensures that the identity morphism
on $C$ is the only arrow $g:C\rightarrow C$ with $gf=f$.
This property ensures that compactifications, if they exist, are unique
up to isomorphism.  Indeed, suppose that $B$ has two compactifications,
$B\overset{f_0}{\rightarrow} C_0$ and $B\overset{f_1}{\rightarrow} C_1$.
Then applying the universal property
of $C_0$ to the arrow $f_1$ yields a unique $g_0:C_0\rightarrow C_1$ with
$g_0 f_0 = f_1$.  Similarly we get a unique $g_1:C_1\rightarrow C_0$ with
$g_1 f_1 = f_0$:
\[ \xymatrix{ & C_0 \ar[d]^{g_0} \\
B \ar[ru]^{f_0} \ar[r]^{f_1} \ar[rd]_{f_0} & C_1 \ar[d]^{g_1} \\
& C_0 } \]
Then the composition $g_1g_0$ satisfies the relation $g_1g_0f_0 = f_0$, and
so $g_1g_0$ is the identity.  Similarly, $g_0g_1$ is the identity, and so
$C_0$ and $C_1$ are isomorphic, as claimed.

Suppose now that every object in $\mf B$ has a compactification; so by
uniqueness, we get a map $\mc F:\mf B\rightarrow\mf C$.  Given any arrow
$B_0 \overset{f}{\rightarrow} B_1$ in $\mf B$, we have the composition
$B_0 \overset{f}{\rightarrow} B_1 \rightarrow \mc FB_1$, where $\mc FB_1\in
\mf C$, and so the the universal property of $\mc FB_0$ gives
a unique arrow $\mc F B_0 \overset{\mc F f}{\rightarrow} \mc F B_1$ making
the following diagram commute
\[ \xymatrix{ B_0 \ar[r]^-{f} \ar[d] & B_1 \ar[d] \\
\mc F B_0 \ar[r]^-{\mc F f} & \mc F B_1 } \]
It is a simple exercise in drawing diagrams, and using uniqueness again, that
$\mc F(f\circ g) = \mc Ff\circ \mc Fg$, that is,
$\mc F$ is a functor $\mf B\rightarrow\mf C$.

Of course all this is well-known: our notion of
a compactification is just the ``universal arrow from $B$ to the inclusion
functor $\mf C\rightarrow\mf B$'' (see \cite[Chapter~III]{ml}).  Indeed,
if compactifications exist, then we have that $\mf C$ is ``reflective''
in $\mf B$ and the compactification is simply the ``reflection''
(see \cite[Section~IV.3]{ml}).  This sort of ``categorical'' approach to
defining the classical Bohr compactification of a group was stuided in
\cite{holm,holm1}, and for a similar treatment of the quantum case,
see the recent paper \cite{chir} (which essentially gives a treatment of
So{\l}tan's work via abstract categorical arguments, but which does not
consider the category {\sf LCQG} described below).

We next introduce the two categories which shall interest us in this paper.

\subsection{C$^*$-bialgebras}

Recall that a ``morphism'' (in the sense of Woronowicz) between C$^*$-algebras
$B$ and $A$ is a non-degenerate $*$-homomorphism $\theta:B\rightarrow M(A)$.
Such a non-degenerate $*$-homomorphism has a unique
extension to a unital, strictly continuous $*$-homomorphism $M(B)
\rightarrow M(A)$, the \emph{strict extension} of $\theta$,
which in this paper we shall tend to denote by the same
symbol $\theta$.  As such, morphisms can be composed.  We also tend to be
slightly imprecise, and to speak of a morphism from $B$ to $A$ (when really
the map is to $M(A)$) especially when drawing commutative diagrams.

The motivation comes from the commutative situation: if $A$ and $B$ are
commutative, then there are locally compact Hausdorff spaces $X_A,X_B$
with $A\cong C_0(X_A)$ and $B\cong C_0(X_B)$.  Furthermore, there is a
bijection between morphisms $\theta:B\rightarrow A$ and continuous maps
$\phi:X_A\rightarrow X_B$ given by $\theta(f) = f\circ\phi$.  If we did not
consider the multiplier algebra $M(A)$, then we would have to restrict
attention to proper continuous maps.  See \cite{lance,woro3} and perhaps
especially \cite[Chapter~2 exercises]{wo} for further details.

Let ${\sf CSBa}$ be the category of C$^*$-bialgebras $(A,\Delta)$,
here thought of in the general sense as $A$ being a (not necessarily
unital) C$^*$-algebra and $\Delta$ a non-degenerate $*$-homomorphism
$A\rightarrow M(A\otimes A)$ which is coassociative in the sense that
$(\Delta\otimes\iota)\Delta = (\iota\otimes\Delta)\Delta$.  An arrow
$(A,\Delta_A)\rightarrow (B,\Delta_B)$ is then a non-degenerate
$*$-homomorphism $\theta:B\rightarrow M(A)$ with $\Delta_A \theta =
(\theta\otimes\theta)\Delta_B$.  A non-degenerate $*$-homomorphism which
intertwines the coproducts in this fashion is
termed a ``Hopf $*$-homomorphism'' in \cite{mrw}.
Here we have ``reversed'' the arrows to generalise better from the commutative
situation, as if $(A,\Delta)\in{\sf CSBa}$ with $A$ commutative, then
$A=C_0(S)$ for some locally compact semigroup $S$ with $\Delta$ induced in
the usual way, $\Delta(f)(s,t) = f(st)$.  Given the discussion above, morphisms
restrict to the usual notion of a continuous semigroup homomorphism.

In ${\sf CSBa}$, we shall define the ``compact'' objects to be the compact
quantum groups in the Woronowicz sense, see the introduction and of course
\cite{woro2}.

\subsection{Locally compact quantum groups}

Let ${\sf LCQG}$ be the category of locally compact quantum groups,
with morphisms in the sense of \cite{mrw}.  Let us remark briefly on
definitions.  We shall follow the Kustermans and Vaes definition, see
\cite{kusbook, kv, kvvn, vaesphd}.

A \emph{locally compact quantum group} in the von Neumann algebraic setting
is a Hopf-von Neumann algebra $(M,\Delta)$ equipped with left and right
invariant weights.  As usual, we use $\Delta$ to turn $M_*$ into a Banach
algebra, and we write the product by juxtaposition.
We shall ``work on the left''; so using the left
invariant weight, we build the GNS space $H$, and a ``multiplicative''
unitary $W$ acting on $H\otimes H$ (of course, the existence
of a right weight is needed to show that $W$ is unitary).  There is a
(in general unbounded) antipode $S$ which admits a ``polar decomposition''
$S=R \tau_{-i/2}$, where $R$ is the unitary antipode, and $(\tau_t)$ is
the scaling group.  There is a nonsingular positive operator $P$ which
implements $(\tau_t)$ as $\tau_t(x) = P^{it} x P^{-it}$.  Then $W$ is
\emph{manageable} with respect to $P$.
One can develop a slightly more general theory of quantum group from
such manageable (or related, ``modular'') multiplicative unitaries,
see \cite{sw2, sw, woro}.  Many of the results of this paper work in this
more general setting; see remarks later.

Given such a $W$, the space $\{ (\iota\otimes\omega)W : \omega\in\mc B(H)_* \}$
is an algebra, and its closure is a C$^*$-algebra, say $A$.  There is a
coassociative map $\Delta:A\rightarrow M(A\otimes A)$ given by $\Delta(a)
= W^*(1\otimes a)W$.  If we formed $W$ from $(M,\Delta)$ with invariant weights,
then $A$ is $\sigma$-weakly dense in $M$, and the two definitions of $\Delta$
agree.  Similarly, $\{ (\omega\otimes\iota)W : \omega\in\mc B(H)_* \}$ is norm
dense in a C$^*$-algebra $\hat A$, and defining $\hat\Delta(\hat a) =
\hat W^*(1\otimes \hat a)\hat W$, we get a non-degenerate
$*$-homomorphism $\hat\Delta:\hat A \rightarrow M(\hat A\otimes \hat A)$,
where here $\hat W = \Sigma W^*\Sigma$, and $\Sigma$ is the tensor swap map
$H\otimes H \rightarrow H\otimes H$.  If we started with $(M,\Delta)$ having
invariant weights, then we can construct invariant weights on $(\hat A'',
\hat\Delta)$.  The unitary $W$ is in the multiplier algebra $M(A\otimes \hat A)
\subseteq \mc B(H\otimes H)$.

We write $\G$ for an abstract object to be thought of as a quantum group.
We write $C_0(\G), L^\infty(\G)$ and $L^1(\G)$ for $A,M$ and $M_*$.
We also write $L^2(\G)$ for $H$.

Again, the commutative examples always arise from locally compact groups with
their Haar measures, and again, morphisms between C$^*$-alegbras, which
intertwine coproducts, correspond to continuous group homomorphisms.  However,
the cocommutative examples are of the form $C^*_r(G)$, the \emph{reduced}
group C$^*$-algebras (and at the von Neumann algebra level, $VN(G)$) as we
work with faithful weights.  Then, for example, the trivial group homomorphism
should correspond to trivial representation $C^*(G)\rightarrow\mathbb C$,
but this remains bounded on $C^*_r(G)$ only for amenable $G$ (see
\cite[Theorem~4.21]{pat}, \cite[Section~6]{hul}).

The passage from $C^*_r(G)$ to $C^*(G)$ can analogously be performed for
quantum groups, see \cite{kus1}.  We shall write $C_0^u(\G)$ for the
\emph{universal} C$^*$-algebraic form of $\G$.
A similar object can be found for manageable multiplicative unitaries, see the
second part of \cite{sw}.

One possible definition for a morphism in ${\sf LCQG}$ is then a non-degenerate
$*$-homomorphism $C_0^u(\G)\rightarrow C_0^u(\H)$ which intertwines the
coproduct.  This was explored in \cite[Section~12]{kus1} where links with
certain coactions of the associated $L^\infty$ algebras was established.
Previously (before the canonical definition of a locally compact quantum
group had been given) Ng studied similar ideas in \cite{ng}.  Unifying and
extending these ideas, the paper \cite{mrw} shows that
Hopf $*$-homomorphisms at the universal level correspond bijectively to various
other natural notions of ``morphism'', and we take this as a working definition
of an arrow in ${\sf LCQG}$.

In summary, an object $\G$ in ${\sf LCQG}$ is
a locally compact quantum group, thought of as either being in reduced
form $C_0(\G)$, or in universal form $C_0^u(\G)$.  A morphism $\G\rightarrow
\H$ can be described in a number of equivalent ways:
\begin{itemize}
\item As a non-degenerate $*$-homomorphism $C_0^u(\H)\rightarrow
C_0^u(\G)$ which intertwines the coproduct (that is, a Hopf $*$-homomorphism
between universal quantum groups).
\item As a \emph{bicharacter} which is a unitary $U\in M(C_0(\G) \otimes
C_0(\hat\H))$ which satisfies $(\Delta_{\G} \otimes \iota)(U) = U_{13} U_{23}$
and $(\iota\otimes\Delta_{\hat\H})(U) = U_{13} U_{12}$.
\end{itemize}
As above, we ``reverse'' the arrows from \cite{mrw}, so as to better
generalise from the commutative situation.  Furthermore, for bicharacters,
we have translated ``to the left'', as we are working with left Haar weights,
and thus left multiplicative unitaries (which form the identity morphisms in
this category).

In ${\sf LCQG}$ we shall define the ``compact objects'' to be those $\G$ with
$C_0(\G)$ (or, equivalently, $C_0^u(\G)$) being unital.  Those $C_0(\G)$
thus arising are precisely the reduced compact quantum groups,
\cite[Section~2]{bmt}. Let ${\sf CQG}$ be the full subcategory of compact
quantum groups.  Henceforth, we shall write $C(\G)$ and $C^u(\G)$ to stress
that the algebra is unital.

To finish, notice that the relation between ${\sf LCQG}$ and ${\sf CSBa}$ is
slightly involved.  If we take the concrete realisation of ${\sf LCQG}$ as
having objects of the form $C_0^u(\G)$ and morphisms described by Hopf
$*$-homomorphisms, then ${\sf LCQG}$ becomes a full subcategory of
${\sf CSBa}$, and the compact objects agree.  However, this viewpoint is
slightly misleading, as for example $C_0(\G)$ will be an object in
${\sf CSBa}$ different from $C_0^u(\G)$, whereas we would generally regard
these as being ``the same'' quantum group.  Furthermore, of course,
${\sf CSBa}$ contains a great many objects which don't arise from ${\sf LCQG}$
in any fashion.  Of interest from the viewpoint of Section~\ref{sec:cat}
is that the ``compact'' objects do correspond, all be it in a many-to-one
fashion.

\subsection{So\l tan's Bohr compactification}

In \cite{soltan}, So\l tan showed that in ${\sf CSBa}$, compactifications
always exist.  We shall shortly give a full account of his theory, but for
now let us make some brief comments.  Given an object $\mc G=(A,\Delta_A)$ in
${\sf CSBa}$, we construct a certain unital C$^*$-subalgebra
$\mathbb{AP}(\mc G)$ in $M(A)$ such that (the strict extension of) $\Delta_A$
restricts to a coproduct $\Delta_{\mathbb{AP}(\mc G)}$ on $\mathbb{AP}(\mc G)$.
Then $\mf b\mc G = (\mathbb{AP}(\mc G),\Delta_{\mathbb{AP}(\mc G)})$ is the
compactification of $\mc G$ (termed the \emph{quantum Bohr compactification}
of $\mc G$ in \cite[Definition~2.14]{soltan}).

Given the concrete realisation of ${\sf LCQG}$ as a full subcategory of
${\sf CSBa}$, given $\G$ we can form $\mathbb{AP}(\G)$ by applying So\l tan's
theory to $C_0^u(\G)$.  However, the obvious problem is that
$(\mathbb{AP}(\G),\Delta_{\mathbb{AP}(\G)})$, while a compact
quantum group, might fail to be a \emph{universal} quantum group, and hence
would not be a member of ${\sf LCQG}$ (viewed as a subcategory of
${\sf CSBa}$).  Indeed, this can even occur when
$\G$ is cocommutative, see Section~\ref{sec:cocomm_red} below.

In the next section, we instead show how to adapt So\l tan's ideas to
construct a compactification in ${\sf LCQG}$.  We will also show that while
the resulting C$^*$-algebra picture is slightly complicated, the underlying
(unique) dense Hopf $*$-algebra can be constructed in a number of equivalent
ways, starting from either $C_0^u(\G)$ or from $C_0(\G)$.

\section{Compactification in ${\sf LCQG}$}
\label{sec:comp_lcqg}

In this section, we shall show how to construct compactifications in
${\sf LCQG}$, by somewhat directly applying So\l tan's construction.
Thus we first summarise So\l tan's work in \cite{soltan}.

Let $\mc G=(A,\Delta)$ be an object of ${\sf CSBa}$.  A (finite-dimensional)
\emph{bounded representation} of $\mc G$ is an element $T$ of 
$M(A) \otimes \mc B(H)$, where $H$ is a finite-dimensional Hilbert space,
with $(\Delta\otimes\iota)T = T_{12} T_{13}$, and such that $T$ is invertible.
Equivalently, we could
term such an object a \emph{invertible corepresentation} of $A$ (with the
$\Delta$ being clear from context) and we shall mostly stick to this
latter convention (to avoid confusing $C_0(\G)$ with $C_0^u(\G)$, and because
we want to stress the ``invertible'' aspect).
There are obvious notions of taking the direct sum, and tensor product,
of invertible corepresentations.

If we take a basis of $\mc B(H)$, then we establish an isomorphism
$\mc B(H) \cong \mathbb M_n$, and thus identify $T$ with $(T_{ij}) \in
\mathbb M_n(M(A))$.  Then $T$ needs to be invertible, and to satisfy
\[ \Delta(T_{ij}) = \sum_{k=1}^n T_{ik} \otimes T_{kj}
\quad\text{for all $i,j$.} \]
Let $T^\top = (T_{ji})$ be the ``transpose'' of $T$; this is always an
``anti-corepresentation''.  We shall say that $T$ is \emph{admissible} if
$T^\top$ is invertible.

\begin{remark}
If $A$ is commutative, then $T$ is invertible if and only if $T^\top$ is,
but in general the admissible corepresentations form a strict subset of the
collection of invertible corepresentations.  In ${\sf CSBa}$ there are
counter-examples (\cite[Remark~2.10]{soltan} which references
\cite[Section~4]{wang}) but we do not know the answer for
$C_0(\G)$ (and/or $C_0^u(\G)$) for $\G\in{\sf LCQG}$; see
Conjecture~\ref{conj:two} at the end.
\end{remark}

The linear span of the elements $T_{ij}$ form the collection of
\emph{matrix elements} of $T$.  By taking direct sums and tensor products,
and using the trivial representation, one can show that the collection of
matrix elements of invertible corepresentations forms a unital
subalgebra of $M(A)$, see \cite[Proposition~2.5]{soltan}.

Harder to show (as we now use Woronowicz's work in \cite{woro1}) is that
when $T$ is an admissible corepresentation, and $B_T$ denotes the C$^*$-algebra
generated by the matrix elements of $T$, then $\Delta$ restricts to
a map $B_T\rightarrow B_T\otimes B_T$, and $(B_T, \Delta|_{B_T})$ is
a compact quantum group, see \cite[Proposition~2.7]{soltan}.  It follows,
see \cite[Corollary~2.9]{soltan}, that $T$ is similar to a unitary
corepresentation (again, in ${\sf CSBa}$ the converse is not true, see
\cite[Remark~2.10]{soltan}).  One can now show that admissible
corepresentations are stable under tensor product, and it follows that the
set of all matrix elements of admissible corepresentations of $A$,
say $\mc{AP}(A)$, forms a unital $*$-subalgebra of $M(A)$,
see \cite[Proposition~2.12]{soltan}.

\begin{remark}\label{rem:uni_add_rep}
Let $T$ be a bounded corepresentation which is similar to a unitary
representation $U$.  Then it is elementary to see that $T^\top$ is invertible
if and only if $U^\top$ is invertible.  It follows that if we are
interested in the matrix elements of admissible corepresentations, it is
no loss of generality to study only admissible, \emph{unitary}
corepresentations.
\end{remark}

We shall again abuse notation slightly  (the map $\Delta$ being implicit)
and write $\mathbb{AP}(A)$ for the closure of $\mc{AP}(A)$.
Thus $\mathbb{AP}(A)$ is a
unital C$^*$-algebra and $\Delta$ restricts to $\mathbb{AP}(A)$ to give a
compact quantum group $\mf b A = (\mathbb{AP}(A), \Delta_{\mathbb{AP}(A)})$.
By \cite[Appendix~A]{bmt} we know that a compact quantum group admits a
unique dense Hopf $*$-algebra.
By combining this with \cite[Corollary~2.15]{soltan} we see that for
$\mathbb{AP}(A)$, this Hopf $*$-algebra is simply $\mc{AP}(A)$.
Finally, \cite[Theorem~3.1]{soltan} shows that $\mathbb{AP}(A)$ satisfies the
correct universal property to be, in the sense of Section~\ref{sec:cat},
the compactification of $(A,\Delta)$ in ${\sf CSBa}$.

\begin{remark}\label{rem:why_univ_prop}
Let $(A,\Delta_A)$ and $(B,\Delta_B)$ be C$^*$-bialgebras and $\theta:A
\rightarrow M(B)$ a Hopf $*$-homomorphism.  If $U\in M(A)\otimes \mathbb M_n$
is an admissible corepresentation, then $V=(\theta\otimes\iota)(U)$ will also
be admissible (as, for example, $\overline{V}=(\theta\otimes\iota)(\overline U)$
will have inverse $(\theta\otimes\iota)(\overline{U}^{-1})$).  It follows that
$\theta(\mathcal{AP}(A)) \subseteq \mathcal{AP}(B)$, and it is from this
observation that we see that $\mathbb{AP}(A)$ has the correct universal
property.
\end{remark}

\subsection{For locally compact quantum groups}

Let $\G$ be an object of ${\sf LCQG}$.  Apply So\l tan's theory to the
universal quantum group $C_0^u(\G)$, to yield a compact quantum group
$\mathbb{AP}(C_0^u(\G))$.  This defines an object $\mathbb K$ of
${\sf LCQG}$ which is compact.  Indeed, $C(\K)$ is the \emph{reduced version}
of $\mathbb{AP}(C_0^u(\G))$, see \cite[Theorem~2.1]{bmt}.  This can be
formed as the quotient of $\mathbb{AP}(C_0^u(\G))$ by the null-ideal of the
Haar state.  Alternatively, we can start with the Hopf $*$-algebra
$\mc{AP}(C_0^u(\G))$, which also carries a Haar state.
Using this we can form the Hilbert space $L^2(\K)$, and then we can identify
$\mc{AP}(C_0^u(\G))$ with a $*$-algebra of operators on $L^2(\K)$.
Then $C(\K)$ is the closure of $\mc{AP}(C_0^u(\G))$ in $\mc B(L^2(\K))$.
See \cite[Theorem~2.11]{bmt}.

We can now form $C^u(\K)$, the universal version of $\K$, by following
\cite{kus1}.  Alternatively, one can follow \cite[Section~3]{bmt}; a little
work shows that these are equivalent constructions.  In particular, we can
think of $C^u(\K)$ as being the universal enveloping C$^*$-algebra of
$\mc{AP}(C_0^u(\G))$, and so there is a surjective $*$-homomorphism
$\Lambda^u_{\mathbb{AP}} : C^u(\K) \rightarrow \mathbb{AP}(C_0^u(\G))$
which intertwines the coproducts.  The composition
\[ \xymatrix{ C^u(\K) \ar[r]^-{\Lambda^u_{\mathbb{AP}}}
& \mathbb{AP}(C_0^u(\G)) \ar@{^{(}->}[r]
& M(C_0^u(\G)) } \]
is hence a Hopf $*$-homomorphism, and so defines an arrow $\G \rightarrow
\K$ in ${\sf LCQG}$.  As we might hope, we have the following result:

\begin{proposition}\label{prop:compact_lcqg}
With the arrow $\G\rightarrow\K$ just defined, $\K$ is the compactification
of $\G$ in ${\sf LCQG}$.
\end{proposition}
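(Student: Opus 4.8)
The plan is to verify the universal property of Section~\ref{sec:cat} directly: given any compact object $\H'\in{\sf CQG}$ and any arrow $\G\to\H'$ in ${\sf LCQG}$, I must produce a unique arrow $\K\to\H'$ making the triangle commute. It is most convenient to work in the ``Hopf $*$-homomorphism between universal algebras'' picture, so an arrow $\G\to\H'$ is a Hopf $*$-homomorphism $\phi:C_0^u(\H')\to M(C_0^u(\G))$, and since $\H'$ is compact, $C_0^u(\H')=C^u(\H')$ is unital, so $\phi$ lands in $M(C_0^u(\G))$ as a unital map. The first step is to observe that $C^u(\H')$ is densely spanned by matrix coefficients of its admissible (indeed unitary) corepresentations — this is exactly Woronowicz's theory for compact quantum groups — and that by Remark~\ref{rem:why_univ_prop}, $\phi$ carries each such matrix coefficient into $\mathcal{AP}(C_0^u(\G))$, because the image of a unitary (hence admissible) corepresentation under $(\phi\otimes\iota)$ is again admissible. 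Hence $\phi$ restricts to a unital $*$-homomorphism from the dense Hopf $*$-algebra $\mathcal O(\H')$ of $\H'$ into $\mathcal{AP}(C_0^u(\G))$, intertwining the coproducts — that is, a Hopf $*$-algebra morphism.

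The second step is to lift this Hopf $*$-algebra morphism to the level of $C^u(\K)$. Recall $C^u(\K)$ is the universal enveloping C$^*$-algebra of $\mathcal{AP}(C_0^u(\G))$, and $\Lambda^u_{\mathbb{AP}}:C^u(\K)\to\mathbb{AP}(C_0^u(\G))$ realises $\mathcal O(\K)=\mathcal{AP}(C_0^u(\G))$ densely inside $C^u(\K)$. So I have a Hopf $*$-algebra map $\mathcal O(\H')\to\mathcal O(\K)$; I then need to promote it to a Hopf $*$-homomorphism $C^u(\H')\to M(C^u(\K))=C^u(\K)$ (unital, both being compact). This is where the universal property of $C^u(\H')$ as the enveloping C$^*$-algebra of $\mathcal O(\H')$ enters: any $*$-homomorphism from $\mathcal O(\H')$ into a C$^*$-algebra (here $C^u(\K)$) that is suitably bounded extends; for compact quantum groups, every $*$-representation of the Hopf $*$-algebra on a Hilbert space is automatically bounded (matrix coefficients of unitary corepresentations have norm at most their dimension, or one uses that the enveloping norm is finite on each coefficient), so the map extends uniquely to $C^u(\H')\to C^u(\K)$. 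That the extension intertwines the coproducts follows by continuity and density from the Hopf-algebra level, and composing with the canonical map $\G\to\K$ recovers $\phi$ on the dense subalgebra, hence everywhere.

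The third step is uniqueness. Any arrow $\psi:\K\to\H'$ with the right composition, read as $\psi:C^u(\H')\to M(C^u(\G))$ factoring the original $\phi$ through $C^u(\G)\to C^u(\K)$, is again a Hopf $*$-homomorphism; restricting to matrix coefficients, it must send $\mathcal O(\H')$ into the span of matrix coefficients of corepresentations of $C^u(\K)$, i.e.\ into $\mathcal O(\K)$, and the requirement that composition with $\G\to\K$ gives $\phi$ pins it down on $\mathcal O(\H')$. Since $\mathcal O(\H')$ is dense in $C^u(\H')$ and $\psi$ is norm-continuous, $\psi$ is determined. Here one uses the injectivity of $C^u(\G)\to C^u(\K)$-type faithfulness only mildly — really one just uses that $\mathcal O(\H')\to\mathcal O(\K)$ is forced, and both Hopf $*$-algebras embed canonically, as recorded after \cite[Appendix~A]{bmt}.

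The main obstacle I anticipate is the careful bookkeeping at the junction between the C$^*$-level and the Hopf $*$-algebra level: one must check that ``admissible corepresentation of $C_0^u(\G)$'' and ``admissible corepresentation of the compact quantum group $\K$'' interact correctly — i.e.\ that the matrix coefficients landing in $\mathcal{AP}(C_0^u(\G))=\mathcal O(\K)$ really do generate all of $C^u(\K)$ and nothing more, which is where \cite[Section~3]{bmt} and \cite[Corollary~2.15]{soltan} do the real work — and that the various universal properties (of $\mathbb{AP}(C_0^u(\G))$ in ${\sf CSBa}$ from \cite[Theorem~3.1]{soltan}, and of $C^u(\K)$ as enveloping algebra) are being invoked in a compatible direction. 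Everything else is diagram-chasing of the kind already illustrated in Section~\ref{sec:cat}.
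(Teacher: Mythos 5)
Your argument is correct, but it takes a genuinely different route from the paper's. The paper passes to the \emph{reduced} algebra $C(\K)$: it composes $\theta$ with the reducing morphism $\Lambda^r_{\mathbb{AP}}$, invokes Proposition~\ref{prop:uni_lifts} to lift the resulting map $C^u(\H)\rightarrow C(\K)$ back up to $C^u(\K)$, and then verifies commutativity of the triangle via the cancellation Lemma~\ref{lem:uni_lift_cqg}; both ingredients are proved with bicharacters and multiplicative unitaries and are stated for general locally compact $\H$. You instead stay entirely at the universal/algebraic level: you restrict $\phi$ to the dense Hopf $*$-algebra $\mathcal{O}(\H')$ (using Remark~\ref{rem:why_univ_prop} together with the fact that unitary corepresentations of a genuine compact quantum group are automatically admissible), observe the image lies in $\mathcal{AP}(C_0^u(\G))=\mathcal{O}(\K)$, and extend to $C^u(\H')\rightarrow C^u(\K)$ by the universal property of the enveloping C$^*$-algebra; commutativity and uniqueness then follow from norm-density of $\mathcal{O}(\H')$ in $C^u(\H')$ together with the fact that $\Lambda^u_{\mathbb{AP}}$ restricts to the identity (in particular, an injection) on $\mathcal{O}(\K)$ --- which is precisely what sidesteps the possible non-injectivity of $\Lambda^u_{\mathbb{AP}}$ on all of $C^u(\K)$. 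Your approach buys a shorter, more self-contained proof that fully exploits compactness of $\H'$, so that Proposition~\ref{prop:uni_lifts} and Lemma~\ref{lem:uni_lift_cqg} are not needed here; the paper's approach buys machinery that is reused elsewhere, for instance in the proof of Proposition~\ref{prop:same_red_uni}. Two small points to tidy: the bound on a matrix coefficient of a unitary corepresentation in an arbitrary $*$-representation is $\|\pi(U_{ij})\|\leq 1$ (from $\sum_k U_{ki}^*U_{ki}=1$), not ``at most their dimension''; and in the uniqueness step you should state explicitly that a Hopf $*$-homomorphism between compact quantum groups carries the canonical dense Hopf $*$-algebra into the canonical dense Hopf $*$-algebra, since that is what reduces the comparison of two candidate arrows to a comparison on $\mathcal{O}(\H')$, where the composition with $\G\rightarrow\K$ really does pin the map down.
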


Before we can prove this result, we need to study in detail the ideas of
\cite{mrw} and \cite{kus1}, as applied to compact quantum groups.

\subsection{Morphisms and lifts}\label{sec:mor_lifts}

Let us recall some notions related to one and two-sided ``universal
bicharacters'' (to use the language of \cite{mrw}).  We shall follow
\cite{kus1}, but analogous results are shown in \cite{mrw,sw}.  Let $\G$ be
a compact quantum group and form the ``universal'' algebra $C_0^u(\G)$
together with the reducing morphism $\Lambda_\G:C_0^u(\G)\rightarrow
C_0(\G)$ (which is denoted by $\pi$ in \cite{kus1}).
There is a unitary $\mc V_{\G} = \mc V \in M(C_0^u(\G)
\otimes C_0(\hat\G))$ such that $(\Delta_u\otimes\iota)(\mc V)
= \mc V_{13}\mc V_{23}$ and $(\iota\otimes\hat\Delta)(\mc V) = \mc V_{13}
\mc V_{12}$; we think of $\mc V$ as being a variant of $W$ with its
left-leg in $C_0^u(\G)$; indeed, $(\Lambda_\G\otimes\iota)(\mc V)=W$.
Similarly, there is $\mc U\in M(C_0^u(\G)\otimes C_0^u(\hat\G))$, a 
fully universal version of $W$.

We now recall some results from \cite{mrw}.
Let $(A,\Delta)$ be a C$^*$-bialgebra, and let $\G$ be a locally compact
quantum group.  We shall say that $U\in M(A\otimes C_0(\hat\G))$ is a
\emph{bicharacter} if $U$ is unitary, $(\Delta\otimes\iota)(U) = U_{13}U_{23}$
and $(\iota\otimes\Delta_{\hat\G})(U) = U_{13}U_{12}$.  Then
\cite[Proposition~4.2]{mrw} shows that there is a bijection between
such bicharacters $U$, and Hopf $*$-homomorphisms $\phi:C_0^u(\G)\rightarrow
A$, the link being that $U = (\phi\otimes\iota)(\mc V_{\G})$.

For locally compact quantum groups $\G,\H$, we can similarly define the
notion of a bicharacter in $M(C_0^u(\G)\otimes C_0^u(\hat\H))$.  Then
\cite[Proposition~4.7]{mrw}
shows that for any bicharacter $U\in M(C_0(\G)\otimes C_0(\hat\H))$, there
is a unique bicharacter $V \in M(C_0^u(\G)\otimes C_0^u(\hat\H))$ with
$(\Lambda_{\G} \otimes \Lambda_{\hat\H})(V) = U$.  We call $V$ the ``lift''
of $U$; hence $\mc U$ is the lift of $W$.

The following is only implicit in \cite{mrw} (having been rather more explicit
in an early preprint\footnote{See \texttt{http://arxiv.org/abs/1011.4284v1}}
of that paper) so we give the short argument.

\begin{proposition}\label{prop:uni_lifts}
Given locally compact quantum groups $\G,\H$ and a Hopf $*$-homomorphism
$\theta:C_0^u(\G) \rightarrow C_0(\H)$, there is a unique
Hopf $*$-homomorphism $\theta_0:C_0^u(\G) \rightarrow C_0^u(\H)$ with
$\theta = \Lambda_{\H} \theta_0$.
\end{proposition}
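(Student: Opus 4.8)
The plan is to use the bicharacter dictionary from \cite{mrw} twice. Starting from the Hopf $*$-homomorphism $\theta:C_0^u(\G)\rightarrow C_0(\H)$, I would first apply \cite[Proposition~4.2]{mrw} to obtain the associated bicharacter $U=(\theta\otimes\iota)(\mc V_\G)\in M(C_0(\H)\otimes C_0(\hat\G))$ — wait, more carefully: the dictionary says bicharacters in $M(A\otimes C_0(\hat\G))$ correspond to Hopf $*$-homomorphisms $C_0^u(\G)\rightarrow A$, so here with $A=C_0(\H)$ we get a bicharacter $U\in M(C_0(\H)\otimes C_0(\hat\G))$. Swapping the roles (or rather noting that a bicharacter $U\in M(C_0(\H)\otimes C_0(\hat\G))$ is the same type of object as appears in \cite[Proposition~4.7]{mrw} with the pair $(\H,\G)$), I would then invoke \cite[Proposition~4.7]{mrw} to lift $U$ uniquely to a bicharacter $V\in M(C_0^u(\H)\otimes C_0^u(\hat\G))$ satisfying $(\Lambda_\H\otimes\Lambda_{\hat\G})(V)=U$.

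Next I would re-interpret $V$. A bicharacter $V\in M(C_0^u(\H)\otimes C_0^u(\hat\G))$ is, by the universal version of the dictionary (again \cite[Proposition~4.2]{mrw}, or the remarks in the excerpt about $\mc U$ and two-sided universal bicharacters), precisely the data of a Hopf $*$-homomorphism $\theta_0:C_0^u(\G)\rightarrow C_0^u(\H)$, via $V=(\theta_0\otimes\iota)(\mc U_\G)$ — here one uses that the left leg of $\mc U_\G$ lives in $C_0^u(\G)$ and the right leg in $C_0^u(\hat\G)$, matching the right leg of $V$. So $\theta_0$ is the candidate map. It remains to check $\theta=\Lambda_\H\theta_0$ and uniqueness.

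For the identity $\theta=\Lambda_\H\theta_0$, the idea is to chase the bicharacters through the reducing morphisms. Applying $\Lambda_\H\otimes\iota$ (in the appropriate slot) to $V=(\theta_0\otimes\iota)(\mc U_\G)$ and using the compatibility $(\Lambda_\H\otimes\iota)\circ(\text{universal }\mc U)=\mc V$-type relations (i.e. $(\iota\otimes\Lambda_{\hat\G})(\mc U_\G)=\mc V_\G$, and $(\Lambda_\H\otimes\iota)(\theta_0\otimes\iota)(\mc U_\G)=(\Lambda_\H\theta_0\otimes\iota)(\mc U_\G)$), I should recover the bicharacter associated to $\Lambda_\H\theta_0$; comparing with the fact that $(\Lambda_\H\otimes\Lambda_{\hat\G})(V)=U=(\theta\otimes\iota)(\mc V_\G)$ and then invoking the injectivity half of the bicharacter–homomorphism bijection forces $\Lambda_\H\theta_0=\theta$. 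Uniqueness of $\theta_0$ follows the same way: any two such maps produce, via $(\cdot\otimes\iota)(\mc U_\G)$, bicharacters in $M(C_0^u(\H)\otimes C_0^u(\hat\G))$ whose images under $\Lambda_\H\otimes\Lambda_{\hat\G}$ both equal $U$; by the uniqueness clause of \cite[Proposition~4.7]{mrw} these bicharacters coincide, and hence so do the homomorphisms.

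The main obstacle I anticipate is purely bookkeeping: keeping straight which algebra each leg of each bicharacter lives in, and verifying the intertwining relations between $\mc U_\G$, $\mc V_\G$, $W$ and the reducing morphisms $\Lambda_\G,\Lambda_{\hat\G},\Lambda_\H,\Lambda_{\hat\H}$ survive applying $\theta_0$ and $\theta$ — there is no deep content beyond the cited propositions, but one must be careful that the ``lift'' in \cite[Proposition~4.7]{mrw} is applied to a bicharacter of the correct form (right leg already reduced, left leg reduced), which is exactly the situation since $\theta$ already lands in $C_0(\H)$ and $\mc V_\G$ already has reduced right leg $C_0(\hat\G)$. A secondary point worth a sentence is confirming that the composition $\Lambda_\H\theta_0$ is again a Hopf $*$-homomorphism (clear, as a composition of such), so that the uniqueness statement in the dictionary legitimately applies.
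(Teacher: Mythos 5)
Your argument is correct and follows essentially the same route as the paper's proof: associate to $\theta$ its bicharacter $U\in M(C_0(\H)\otimes C_0(\hat\G))$ via \cite[Proposition~4.2]{mrw}, lift it uniquely to $V\in M(C_0^u(\H)\otimes C_0^u(\hat\G))$ via \cite[Proposition~4.7]{mrw}, and read off $\theta_0$ from $V$ --- the paper reduces the right leg first, using $(\iota\otimes\Lambda_{\hat\G})(V)=(\theta_0\otimes\iota)(\mc V_\G)$, which is equivalent to your use of $\mc U_\G$ because $(\iota\otimes\Lambda_{\hat\G})(\mc U_\G)=\mc V_\G$. Your uniqueness argument, via the uniqueness clause of the lift in \cite[Proposition~4.7]{mrw} together with density of the slices of $\mc V_\G$, is a clean self-contained version of what the paper only sketches by reference to \cite[Lemma~50]{mrw}.
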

\begin{proof}
Let $U \in M(C_0(\H) \otimes C_0(\hat\G))$ be the unique bicharacter
associated with $\theta$, and let $V \in M(C_0^u(\H) \otimes C_0^u(\hat\G))$
be the unique ``lift''.  Then $(\iota\otimes\Lambda_{\hat\G})(V)$ is
a bicharacter, and so gives a unique $\theta_0:C_0^u(\G) \rightarrow C_0^u(\H)$.
Then observe that $(\iota\otimes\Lambda_{\hat\G})(V) =
(\theta_0\otimes\iota)(\mc V_{\G})$.  It follows that
\[ (\Lambda_{\H} \theta_0\otimes\iota)(\mc V_{\G})
= (\Lambda_{\H}\otimes\Lambda_{\hat\G})(V) = U
= (\theta\otimes\iota)(\mc V_{\G}). \]
As $\{ (\iota\otimes\omega)(\mc V_{\G}) : \omega\in L^1(\hat\G) \}$
is dense in $C_0^u(\G)$, it follows that $\Lambda_{\H} \theta_0 = \theta$.
Uniqueness follows in a similar way to \cite[Lemma~50]{mrw}, 
\cite[Lemma~6.1]{kus1}, compare Lemma~\ref{lem:uni_lift_cqg} below.
\end{proof}

Now let $A$ be a compact quantum group, not assumed to be
reduced or universal.  Let $\K$ be the abstract compact quantum group
determined by $A$, so that $C(\K)$ is the reduced version of $A$, and
$C^u(\K)$ is the universal version of $A$.  As above, if $\mc A$ is the
unique dense Hopf $*$-algebra of $A$, then $C(\K)$ is the completion of
$\mc A$ determined by the Haar state, and $C^u(\K)$ is the universal
C$^*$-algebra completion of $\mc A$.  Let $\Lambda^u_A:C^u(\K) \rightarrow
A$ and $\Lambda^r_A:A\rightarrow C(\K)$ be the surjective Hopf
$*$-homomorphisms which make the following diagram commute:
\[ \xymatrix{ C^u(\K) \ar@{->>}[r]^-{\Lambda^u_A} &
A \ar@{->>}[r]^-{\Lambda^r_A} & C(\K) \\
& \mc A \ar@{^{(}->}[lu] \ar@{^{(}->}[u] \ar@{^{(}->}[ru] } \]
We also have the surjective Hopf $*$-homomorphism $\Lambda_{\K}:
C^u(\K) \rightarrow C(\K)$.  As this also respects the inclusion of
$\mc A$ into $C_0^u(\K)$ and $C_0(\K)$, it follows that we have a further
commutative diagram:
\[ \xymatrix{ C^u(\K) \ar[r]^-{\Lambda^u_A}
\ar@/_1pc/[rr]_-{\Lambda_\K}
& A \ar[r]^-{\Lambda^r_A} & C(\K) } \]

The following lemma uses similar techniques to \cite[Result~6.1]{kus1}.

\begin{lemma}\label{lem:uni_lift_cqg}
Let $(A,\Delta_A)$ and $\Lambda^r_A$ be as above.  Let $\H$ be a locally
compact quantum group.  If $\pi_1,\pi_2:C_0^u(\H) \rightarrow A$ are Hopf
$*$-homomorphisms such that $\Lambda^r_A\pi_1 = \Lambda^r_A\pi_2$, then
$\pi_1=\pi_2$.
\end{lemma}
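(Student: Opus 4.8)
The plan is to recast the statement in terms of bicharacters and then to deduce it from the uniqueness already available at the universal level, namely Proposition~\ref{prop:uni_lifts}.

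First I would invoke \cite[Proposition~4.2]{mrw}, applied to the C$^*$-bialgebra $A$: each Hopf $*$-homomorphism $\pi_i\colon C_0^u(\H)\to A$ is encoded by the bicharacter $U_i:=(\pi_i\otimes\iota)(\mc V_\H)\in M(A\otimes C_0(\hat\H))$, via $\pi_i\big((\iota\otimes\omega)(\mc V_\H)\big)=(\iota\otimes\omega)(U_i)$ for $\omega\in C_0(\hat\H)^*$. Since $\{(\iota\otimes\omega)(\mc V_\H):\omega\in C_0(\hat\H)^*\}$ is dense in $C_0^u(\H)$, we have $\pi_1=\pi_2$ exactly when $U_1=U_2$. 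Applying $\Lambda^r_A\otimes\iota$ to the hypothesis $\Lambda^r_A\pi_1=\Lambda^r_A\pi_2$ gives $(\Lambda^r_A\otimes\iota)(U_1)=(\Lambda^r_A\otimes\iota)(U_2)$; write $V\in M(C(\K)\otimes C_0(\hat\H))$ for this common bicharacter, which by \cite[Proposition~4.2]{mrw} for the reduced algebra $C(\K)$ corresponds to the Hopf $*$-homomorphism $\rho:=\Lambda^r_A\pi_i\colon C_0^u(\H)\to C(\K)=C_0(\K)$.

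Now $C(\K)$ is the reduced algebra $C_0(\K)$ of the compact locally compact quantum group $\K$, so Proposition~\ref{prop:uni_lifts}, with $\K$ in the role of the target quantum group, produces a \emph{unique} Hopf $*$-homomorphism $\rho_0\colon C_0^u(\H)\to C_0^u(\K)=C^u(\K)$ with $\Lambda_\K\rho_0=\rho$. The goal then becomes to prove $\pi_i=\Lambda^u_A\circ\rho_0$ for $i=1,2$, which — the right-hand side being independent of $i$ — is exactly $\pi_1=\pi_2$. For this it suffices to show that each $\pi_i$ lifts through the quotient $\Lambda^u_A\colon C^u(\K)\to A$, i.e.\ that there is a Hopf $*$-homomorphism $\sigma_i\colon C_0^u(\H)\to C^u(\K)$ with $\Lambda^u_A\sigma_i=\pi_i$: granting this, $\Lambda_\K\sigma_i=\Lambda^r_A\Lambda^u_A\sigma_i=\Lambda^r_A\pi_i=\rho$, so the uniqueness clause of Proposition~\ref{prop:uni_lifts} forces $\sigma_i=\rho_0$, and hence $\pi_i=\Lambda^u_A\sigma_i=\Lambda^u_A\rho_0$.

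The main obstacle is producing the factorisation $\pi_i=\Lambda^u_A\sigma_i$, and this is where compactness of $A$ enters essentially and where the argument runs parallel to \cite[Result~6.1]{kus1} and \cite[Lemma~50]{mrw}. Passing once more to bicharacters, $\sigma_i$ amounts to a bicharacter $\tilde U_i\in M(C^u(\K)\otimes C_0(\hat\H))$ with $(\Lambda^u_A\otimes\iota)(\tilde U_i)=U_i$, that is, one must lift the left leg of $U_i$ from the intermediate completion $A$ up to the universal completion $C^u(\K)$ of the common dense Hopf $*$-algebra $\mc A$. A candidate is obtained from the lifting result \cite[Proposition~4.7]{mrw}: since the left leg of $V$ lies in the \emph{reduced} algebra $C_0(\K)$, that result gives a bicharacter $\tilde V\in M(C^u(\K)\otimes C_0(\hat\H))$ with $(\Lambda_\K\otimes\iota)(\tilde V)=V$, and then $(\Lambda^u_A\otimes\iota)(\tilde V)$ is a bicharacter in $M(A\otimes C_0(\hat\H))$ whose image under $\Lambda^r_A\otimes\iota$ is again $V$. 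The delicate point — which I expect to be the only substantial computation, and which I would carry out by the slice-density argument underlying the proof of Proposition~\ref{prop:uni_lifts} — is to verify that a bicharacter in $M(A\otimes C_0(\hat\H))$ is determined by its image under $\Lambda^r_A\otimes\iota$, so that $U_i=(\Lambda^u_A\otimes\iota)(\tilde V)$ for each $i$; note that one cannot simply appeal to injectivity of $\Lambda^r_A$, which fails when $A$ is not reduced, so the bicharacter identities (in particular unitarity of $U_i$) have to be used, exactly as in \cite{mrw} and \cite{kus1}. Once this is in place, $U_1=U_2$ and therefore $\pi_1=\pi_2$.
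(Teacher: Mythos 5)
Your reduction of the problem is sound as far as it goes: you correctly identify that everything comes down to showing that a bicharacter $U_i\in M(A\otimes C_0(\hat\H))$ is determined by $(\Lambda^r_A\otimes\iota)(U_i)$, and you correctly observe that injectivity of $\Lambda^r_A$ is unavailable. But that determination claim \emph{is} the lemma --- once you have it, $U_1=U_2$ and hence $\pi_1=\pi_2$ follow at once, so the detour through Proposition~\ref{prop:uni_lifts} and the factorisations $\pi_i=\Lambda^u_A\sigma_i$ is redundant (as your own final sentence concedes); moreover, the uniqueness clause of Proposition~\ref{prop:uni_lifts} is itself justified in the paper by reference to this very lemma, so leaning on it here is at best uncomfortable. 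The genuine gap is that you defer the one substantial step to ``the slice-density argument underlying the proof of Proposition~\ref{prop:uni_lifts}'' and to \cite{mrw}, \cite{kus1}; but those arguments treat the reducing morphism $\Lambda_\H:C_0^u(\H)\rightarrow C_0(\H)$ from the universal to the reduced completion, and they do not apply verbatim to the quotient $\Lambda^r_A:A\rightarrow C(\K)$ from an arbitrary intermediate completion $A$. The adaptation is precisely what has to be proved, and your proposal does not supply the mechanism.

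The missing ingredient is an implementing unitary at the level of $A$. Pushing the universal relation $(\iota\otimes\Lambda_\K)\Delta^u_\K(x)=\mc V^*(1\otimes\Lambda_\K(x))\mc V$ down through the surjection $\Lambda^u_A$ gives $U:=(\Lambda^u_A\otimes\iota)(\mc V)\in M(A\otimes C_0(\hat\K))$ with
\[ U^*(1\otimes\Lambda^r_A(x))U=(\iota\otimes\Lambda^r_A)\Delta_A(x)\qquad(x\in A). \]
Applying $\iota\otimes\Lambda^r_A\otimes\iota$ to the corepresentation identity $(\Delta_A\otimes\iota)(V)=V_{13}V_{23}$, where $V=(\pi\otimes\iota)(\mc U)$ and $\mc U$ is the universal bicharacter of $\H$, then yields
\[ V_{13}=U^*_{12}\big((\Lambda^r_A\otimes\iota)(V)\big)_{23}\,U_{12}\,\big((\Lambda^r_A\otimes\iota)(V)\big)_{23}^*, \]
which exhibits $V$, and hence $\pi$ (via slices of $\mc V_\H$), as a function of $\Lambda^r_A\pi$ alone. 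Without constructing $U$ --- which uses the surjectivity of $\Lambda^u_A$ and is where the compact quantum group structure of $A$ genuinely enters --- there is no way to recover the unreduced leg of the bicharacter, so your argument as written does not close.
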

\begin{proof}
Consider the universal left regular representation $\mc V$ for $\K$.  By
\cite[Proposition~6.2]{kus1} we have that
\[ (\iota\otimes\Lambda_\K)\Delta^u_\K(x)
= \mc V^*(1\otimes\Lambda_\K(x))\mc V   \qquad (x\in C^u(\K)). \]
Set $U = (\Lambda^u_A\otimes\iota)(\mc V) \in M(A \otimes C_0(\hat\K))$,
so that
\[ U^*(1\otimes\Lambda_\K(x)) U
= (\Lambda^u_A \otimes\Lambda_\K)\Delta^u_\K(x)
= (\iota\otimes\Lambda^r_A)\Delta_A(\Lambda^u_A(x))
\qquad (x\in C^u(\K)). \]
It follows that
\[ U^*(1\otimes \Lambda^r_A(x)) U = (\iota\otimes\Lambda^r_A)\Delta_A(x)
\qquad (x\in A). \]
We remark that we could construct $U$ purely using compact quantum group
techniques, compare equation (5.10) in \cite{woro2} (and remember that we
work with left multiplicative unitaries).

Now let $\pi:C_0^u(\H)\rightarrow A$ be a Hopf $*$-homomorphism.
Let $\mc U$ be the universal bicharacter of $\H$
and set $V = (\pi\otimes\iota)(\mc U) \in M(A\otimes C_0^u(\hat\H))$.
Then
\[ (\Delta_A\otimes\iota)(V) = (\pi\otimes\pi\otimes\iota)
(\Delta^u_\H\otimes\iota)(\mc U)
= V_{13} V_{23}. \]
By combining the previous two displayed equations, we see that
\[ V_{13} ((\Lambda^r_A\otimes\iota)(V))_{23}
= ((\iota\otimes\Lambda^r_A)\Delta_A \otimes \iota)(V)
= U^*_{12} ((\Lambda^r_A\otimes\iota)(V))_{23} U_{12}, \]
and so
\[ V_{13} = U^*_{12} ((\Lambda^r_A\otimes\iota)(V))_{23} U_{12}
((\Lambda^r_A\otimes\iota)(V))_{23}^*. \]
It follows that $V$ is determined by $(\Lambda^r_A\otimes\iota)(V)
= (\Lambda^r_A\pi\otimes\iota)(\mc U)$, that is, $V$ is determined
by $\Lambda^r_A\pi$.
By \cite[Corollary~6.1]{kus1}, $(\iota\otimes\Lambda_{\hat\H})(\mc U)
= \mc V_\H$, and by the remarks after \cite[Proposition~5.1]{kus1}, we
know that $\{ (\iota\otimes\omega)(\mc V_\H) : \omega\in L^1(\hat\H) \}$
is dense in $C_0^u(\H)$.  Thus $\pi$ is determined by knowing
\[ \pi( (\iota\otimes\omega)(\mc V_\H) ) = 
\pi( (\iota\otimes\omega \Lambda_{\hat\H})(\mc U) )
= (\iota\otimes\omega \Lambda_{\hat\H})(V). \]
We conclude that $\pi$ is determined uniquely by knowing $V$, and
in turn $V$ is uniquely determined by knowing $\Lambda^r_A \pi$.
The result follows.
\end{proof}

\subsection{Back to compactifications}

We are now in a position to prove Proposition~\ref{prop:compact_lcqg}.
Let $\K$ be the compact quantum defined by $\mathbb{AP}(C_0^u(\G))$.
Let $\Lambda^\K_{\mathbb{AP}}:C^u(\K)\rightarrow M(C_0^u(\G))$ be the
Hopf $*$-homomorphism defining the arrow $\G\rightarrow\K$, and let
$\Lambda^r_{\mathbb{AP}}:\mathbb{AP}(C_0^u(\G)) \rightarrow C(K)$ be
the ``reducing morphism'' considered in the previous section.

\begin{proof}[Proof of Proposition~\ref{prop:compact_lcqg}]
Let $\H$ be compact in ${\sf LCQG}$, and let $\G\rightarrow\H$ be an arrow.
We have to show that this factors through $\G\rightarrow\K$.
Let the arrow $\G\rightarrow\H$ correspond
to the Hopf $*$-homomorphism $\theta:C^u(\H) \rightarrow C_0^u(\G)$.
As $\mathbb{AP}(C_0^u(\G))$ is a compactification, compare
Remark~\ref{rem:why_univ_prop}, it follows that $\theta(C^u(\H)) \subseteq
\mathbb{AP}(C_0^u(\G)) \subseteq M(C_0^u(\G))$.  So the composition
\[ \xymatrix{ C^u(\H) \ar[r]^-{\theta} &
\mathbb{AP}(C_0^u(\G)) \ar[r]^-{\Lambda^r_{\mathbb{AP}}} &
C(\K) } \]
makes sense, and is a Hopf $*$-homomorphism $\theta_0:C^u(\H) \rightarrow
C(\K)$.  Let $\theta_1:C^u(\H)\rightarrow C^u(\K)$ be the
unique lift given by Proposition~\ref{prop:uni_lifts}.  This defines
an arrow $\K\rightarrow\H$ in ${\sf LCQG}$.

We now claim that the following diagrams commute:
\[ \xymatrix{ \G \ar[r] \ar[rd] & \H \\
& \K \ar[u] } \qquad
\xymatrix{ C_0^u(\G) & C^u(\H) \ar[l]_-{\theta} \ar[d]^-{\theta_1} \\
& C^u(\K) \ar[lu]^-{\Lambda^{\K}_{\mathbb{AP}}} } \]
By the definition of arrows in ${\sf LCQG}$, one diagram commutes if and only
if the other does.  However, we calculate that
\[ \Lambda^r_{\mathbb{AP}} \theta = \theta_0
= \Lambda_{\K} \theta_1
= \Lambda^r_{\mathbb{AP}} \Lambda^{\K}_{\mathbb{AP}} \theta_1. \]
By Lemma~\ref{lem:uni_lift_cqg}, it follows that
$\theta = \Lambda^{\K}_{\mathbb{AP}} \theta_1$, as required.
\end{proof}

\begin{definition}
Given $\G$, let the resulting compact quantum group $\K$ be denoted by
$\G^{\sap}$, the \emph{strongly almost periodic} compactification of $\G$.
\end{definition}

One could equally well call this the ``Bohr compactification'' of $\G$, but
this terminology would clash with that used by So\l tan in \cite{soltan}
(because $\G^\sap$ is an abstract quantum group, not in general a concrete
sub-C$^*$-bialgebra of $M(C_0(\G))$).
Our terminology is inspired by that for semigroups, see
\cite[Section~4.3]{berg} (for ``reasonable'' semigroups, the ``strongly almost
periodic compactification'' is the universal compact group compactification,
while the ``almost periodic compactification'' is the universal compact
\emph{semigroup} compactification.  For topological groups, the notions
coincide: it would be interesting to investigate analogous ideas for
C$^*$-bialgebras).

\begin{remark}\label{rem:what_is_cmpt_functor}
Recall from Section~\ref{sec:cat} that for an arrow $\G\rightarrow\H$ in
${\sf LCQG}$ we have a unique arrow $\G^\sap \rightarrow \H^\sap$.
If $\G\rightarrow\H$ is given by a Hopf $*$-homomorphism $\theta:
C_0^u(\H)\rightarrow C_0^u(\G)$, and $\G^\sap \rightarrow \H^\sap$ is
given by $\theta^\sap:C^u(\H^\sap) \rightarrow C^u(\G^\sap)$, then by
construction, this is the unique Hopf $*$-homomorphism making the following
diagram commute:
\[ \xymatrix{ C^u(\H^\sap) \ar@{->>}[r]^{\Lambda^u_{\mathbb{AP}}}
\ar[d]^-{\theta^\sap} &
\mathbb{AP}(C_0^u(\H)) \ar@{^{(}->}[r] &
M(C_0^u(\H)) \ar[d]^-{\theta} \\
C^u(\G^\sap) \ar@{->>}[r]^{\Lambda^u_{\mathbb{AP}}} &
\mathbb{AP}(C_0^u(\G)) \ar@{^{(}->}[r] &
M(C_0^u(\G)) } \]
Now, the composition $\mathbb{AP}(C_0^u(\H)) \rightarrow
M(C_0^u(\H)) \overset{\theta}{\rightarrow} M(C_0^u(\G))$ is a
Hopf $*$-homomorphism, and so,
again by \cite[Theorem~3.1]{soltan}, it follows that the image is a subset
of $\mathbb{AP}(C_0^u(\G))$.  So actually $\theta^\sap$ drops to a
Hopf $*$-homomorphism $\mathbb{AP}(C_0^u(\H)) \rightarrow
\mathbb{AP}(C_0^u(\G))$; that is, provides an arrow in the middle
vertical in the diagram above.

Indeed, by Remark~\ref{rem:why_univ_prop} we know
that $\theta(\mathcal{AP}(C_0^u(\H))) \subseteq \mathcal{AP}(C_0^u(\G))$,
and so $\theta$ restricts to give a map $\mathbb{AP}(C_0^u(\H)) \rightarrow
\mathbb{AP}(C_0^u(\G))$; this is the map considered in the previous paragraph.
By composing with the inclusion $\mathcal{AP}(C_0^u(\G))\rightarrow
C^u(\G^\sap)$ we obtain a Hopf $*$-homomorphism $\mathcal{AP}(C_0^u(\H))
\rightarrow C^u(\G^\sap)$.  As $C^u(\H^\sap)$ is the universal C$^*$-algebra
generated by $\mathcal{AP}(C_0^u(\H))$, we hence obtain a map
$C^u(\H^\sap) \rightarrow C^u(\G^\sap)$, and by tracing the construction in
the proof of Proposition~\ref{prop:compact_lcqg}, we find that this map is
indeed $\theta^\sap$.
\end{remark}

We next investigate what would happen if we used $\mathbb{AP}(C_0(\G))$
instead of $\mathbb{AP}(C_0^u(\G))$.

\begin{proposition}\label{prop:same_red_uni}
Let $\G$ be a locally compact quantum group.  Consider the Hopf $*$-algebras
$\mc{AP}(C_0(\G))$ and $\mc{AP}(C_0^u(\G))$, which we can consider as
subalgebras of $M(C_0^u(\G))$ and $M(C_0(\G))$, respectively.
Then the strict extension of
$\Lambda:C_0^u(\G) \rightarrow C_0(\G)$ restricts to form a bijection
$\mc{AP}(C_0^u(\G)) \rightarrow \mc{AP}(C_0(\G))$.  In particular,
$C(\G^\sap)$ is also the reduced version of the compact
quantum group $\mathbb{AP}(C_0(\G))$.
\end{proposition}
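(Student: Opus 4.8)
The plan is to show that $\Lambda:C_0^u(\G)\to C_0(\G)$ sets up a bijection between the matrix elements of admissible corepresentations on the two levels. The forward direction is easy: as noted in Remark~\ref{rem:why_univ_prop} (applied to the Hopf $*$-homomorphism $\Lambda$), if $U\in M(C_0^u(\G))\otimes\mathbb M_n$ is an admissible corepresentation then $V=(\Lambda\otimes\iota)(U)$ is admissible, so the strict extension of $\Lambda$ maps $\mc{AP}(C_0^u(\G))$ into $\mc{AP}(C_0(\G))$. The content is to produce an inverse. Given an admissible corepresentation $U\in M(C_0(\G))\otimes\mathbb M_n$, I want to ``lift'' it to an admissible corepresentation $\tilde U\in M(C_0^u(\G))\otimes\mathbb M_n$ with $(\Lambda\otimes\iota)(\tilde U)=U$. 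By Remark~\ref{rem:uni_add_rep} it is no loss of generality to assume $U$ is a \emph{unitary} admissible corepresentation.

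First I would observe that a unitary corepresentation $U\in M(C_0(\G)\otimes\mathbb M_n)$ is exactly the same data as a unitary $U\in M(C_0(\G)\otimes \mc B(H))$ with $H=\mathbb C^n$ satisfying $(\Delta\otimes\iota)(U)=U_{12}U_{13}$; thinking of $\mc B(H)$ as (the multiplier algebra of) a C$^*$-algebra and rearranging legs, this is a bicharacter-type object, and the representation theory of $C_0(\G)$ in the Kustermans sense gives a bijection between such corepresentations of $C_0(\G)$ and those of $C_0^u(\G)$ — concretely, every corepresentation of $C_0(\G)$ is of the form $(\Lambda\otimes\iota)(\tilde U)$ for a unique corepresentation $\tilde U$ of $C_0^u(\G)$, because finite-dimensional corepresentations are pointwise compositions of the (universal) half-lifted left regular representation $\mc V_\G$ with representations of the dual (this is the standard correspondence behind \cite{kus1}, and for the compact situation is spelled out in \cite{bmt}). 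Since admissibility of $U=(\Lambda\otimes\iota)(\tilde U)$ is equivalent to admissibility of $\tilde U$ — again by the argument of Remark~\ref{rem:uni_add_rep}/Remark~\ref{rem:why_univ_prop}, noting $\Lambda$ intertwines transposes and that $(\Lambda\otimes\iota)$ carries an inverse of $\tilde U^\top$ to an inverse of $U^\top$, while conversely an inverse of $U^\top$ pulls back along the splitting — this sets up a bijection between admissible corepresentations on the two levels, hence between their matrix elements, i.e. a linear bijection $\mc{AP}(C_0^u(\G))\to\mc{AP}(C_0(\G))$ implemented by the strict extension of $\Lambda$.

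The main obstacle is making the lifting of a general (not a priori unitary, and possibly only \emph{invertible}) finite-dimensional corepresentation of $C_0(\G)$ to $C_0^u(\G)$ rigorous, and in particular checking injectivity of $\Lambda$ on $\mc{AP}(C_0^u(\G))$. For surjectivity I rely on the structure theory: the half-lifted multiplicative unitary $\mc V_\G\in M(C_0^u(\G)\otimes C_0(\hat\G))$ has $(\Lambda_\G\otimes\iota)(\mc V_\G)=W$, and finite-dimensional unitary corepresentations of $C_0(\G)$ correspond to finite-dimensional $*$-representations of the dual algebra applied to the appropriate leg, all of which factor through the universal object $C_0^u(\hat\G)$ automatically — so they lift. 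For injectivity, if $x\in\mc{AP}(C_0^u(\G))$ with $\Lambda(x)=0$, write $x$ as a matrix element of an admissible corepresentation $\tilde U$; then $\Lambda(x)=0$ forces the corresponding matrix entry of $U=(\Lambda\otimes\iota)(\tilde U)$ to vanish, but the bijection on corepresentations together with the fact that the matrix elements of $\tilde U$ and of $U$ span vector spaces of the same dimension (being identified under $\Lambda\otimes\iota$) forces $x=0$; alternatively one can invoke that $\mc{AP}(C_0^u(\G))$ is the unique dense Hopf $*$-algebra of the compact quantum group $\mathbb{AP}(C_0^u(\G))$ and that $\Lambda$ restricted to it is a Hopf $*$-algebra map onto $\mc{AP}(C_0(\G))$ which is also the unique dense Hopf $*$-algebra of $\mathbb{AP}(C_0(\G))$, and a surjective Hopf $*$-algebra morphism between the Hopf $*$-algebras of compact quantum groups that is compatible with the Haar states and the $C_0$-level maps must be injective here because the counit/antipode structure is rigid. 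Once the bijection $\mc{AP}(C_0^u(\G))\to\mc{AP}(C_0(\G))$ is established, it is automatically a Hopf $*$-isomorphism (it intertwines the coproducts, being the restriction of the intertwiner $\Lambda$), so the two compact quantum groups $\mathbb{AP}(C_0^u(\G))$ and $\mathbb{AP}(C_0(\G))$ have the same dense Hopf $*$-algebra; by \cite[Appendix~A]{bmt} and \cite[Theorem~2.1]{bmt} this common Hopf $*$-algebra has a unique reduced completion, namely $C(\G^\sap)$, giving the final assertion.
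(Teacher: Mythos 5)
Your skeleton matches the paper's (forward inclusion via the Hopf $*$-homomorphism property of $\Lambda$, lifting unitary corepresentations to the universal level, plus a separate injectivity check), but the two load-bearing steps are asserted rather than proved, and the assertions as stated do not hold up. First, the admissibility of the lift. You claim that admissibility of $U=(\Lambda\otimes\iota)(\tilde U)$ is equivalent to admissibility of $\tilde U$ because ``an inverse of $U^\top$ pulls back along the splitting''. There is no splitting: $\Lambda:C_0^u(\G)\rightarrow C_0(\G)$ is a (typically non-injective) surjection with no $*$-homomorphic section, so an inverse of $U^\top$, which lives in $\mathbb M_n(M(C_0(\G)))$, cannot be transported to $\mathbb M_n(M(C_0^u(\G)))$. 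Only the easy direction (admissible upstairs implies admissible downstairs) follows from Remark~\ref{rem:why_univ_prop}; the converse is precisely the hard content of the proposition. The paper's proof handles it by observing that $U_0^\top$ is invertible iff $\overline{U_0}$ is, using compact matrix quantum group theory to write $\overline{U_0}=FV_0F^{-1}$ with $V_0$ unitary and $F$ scalar, lifting $V_0$ to a unitary corepresentation $V$ of $C_0^u(\G)$, and then proving by an explicit computation with $\mc V^*(1\otimes\,\cdot\,)\mc V$ that $\overline{U}=FVF^{-1}$. That the unique unitary lifting is compatible with conjugation (up to the similarity $F$) is a genuine statement requiring proof; it is not formal, since $\overline{U_0}$ is not unitary and the lifting correspondence you invoke is only available for unitary corepresentations.

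Second, injectivity. Your primary argument is circular: saying the matrix elements of $\tilde U$ and of $U$ ``span vector spaces of the same dimension (being identified under $\Lambda\otimes\iota$)'' presupposes that $\Lambda$ is injective on those matrix elements, which is what is to be shown. Your fallback---that a surjective Hopf $*$-algebra morphism between the dense Hopf $*$-algebras of compact quantum groups must be injective because the counit/antipode structure is rigid---is false in general: restriction to a proper closed quantum subgroup gives a surjective, non-injective Hopf $*$-morphism of CQG algebras. The paper proves injectivity concretely, on the larger space of \emph{all} matrix elements of unitary corepresentations: any such element of $M(C_0^u(\G))$ has the form $a=(\iota\otimes\mu)(\mc U)$ with $\mu=\omega\circ\phi\in C_0^u(\hat\G)^*$, and $\Lambda(a)=(\iota\otimes\mu)(\hat{\mc V})$; since $C_0^u(\hat\G)$ is the closed linear span of the slices $(\tau\otimes\iota)(\hat{\mc V})$ with $\tau\in L^1(\G)$, $\mu\neq 0$ forces $\Lambda(a)\neq 0$. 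You would need to supply arguments of this kind (or equivalents) for both steps before the proposal constitutes a proof.
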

\begin{proof}
By construction (see after Definition~\ref{rem:uni_add_rep})
$\mc{AP}(C_0(\G))$ is merely the set of elements of admissible
corepresentations of $C_0(\G)$; and 
similarly for $\mc{AP}(C_0^u(\G))$.  As $\Lambda$ is a Hopf-$*$-homomorphism,
it is clear that $\Lambda(\mc{AP}(C_0^u(\G))) \subseteq \mc{AP}(C_0(\G))$.

Conversely, and with reference to Remark~\ref{rem:uni_add_rep}, let
$U_0$ be an admissible unitary corepresentation of $C_0(\G)$, and let $U$ be
the unique lift to a unitary corepresentation of $C_0^u(\G)$.  Our aim is to
show that $U$ is admissible, from which it will follow that 
$\Lambda(\mc{AP}(C_0^u(\G))) = \mc{AP}(C_0(\G))$.

It is easy to see that $U_0^T$ is invertible if and only if $\overline{U_0}
= (U_{0,ij}^*)_{i,j=1}^n$ is invertible.  Now, $\overline{U_0}$ is a
corepresentation (and not an anti-corepresentation), and from the theory of
compact quantum (matrix) groups
we know that $\overline{U_0}$ is similar to a unitary, so there is a scalar
matrix $F$ with $V_0=F^{-1} \overline{U_0} F$ unitary.  Let $V$ be the 
unique lift to $C_0^u(\G)$.  We shall show that $\overline{U} =
F V F^{-1}$, which is invertible, showing that $U$ is admissible as claimed.

We now argue as in the proof of Lemma~\ref{lem:uni_lift_cqg}.
For $i,j$, we have that
\begin{align*} \sum_k U_{ik} \otimes U_{0,kj}
&= (\iota\otimes\Lambda)\Delta_u(U_{ij})
= \mc V^*(1\otimes U_{0,ij})\mc V
= \mc V^*(1\otimes (F V_0 F^{-1})_{ij}^*)\mc V \\
&= \sum_{s,t} \overline{F_{is}} \overline{F^{-1}_{tj}}
   \mc V^*(1\otimes V_{0,st}^*) \mc V
= \sum_{s,t} \overline{F_{is}} \overline{F^{-1}_{tj}}
   (\iota\otimes\Lambda)\Delta_u(V_{st}^*) \\
&= \sum_{s,t,k} \overline{F_{is}} \overline{F^{-1}_{tj}}
   V_{sk}^* \otimes V_{0,kt}^*
= \sum_k (FV)_{ik}^* \otimes (V_0 F^{-1})_{kj}^* \\
&= \sum_k (FV)_{ik}^* \otimes (F^{-1}\overline{U_0})_{kj}^*
= \sum_k (FV)_{ik}^* \otimes (\overline{F^{-1}} U_0)_{kj}.
\end{align*}
It then follows that as $U_0$ is unitary, for each $i,r$,
\begin{align*} \sum_{k,j} U_{ik} \otimes U_{0,kj} U_{0,rj}^*
&= \sum_k U_{ik} \otimes \delta_{k,r} 1 = U_{ir} \otimes 1 \\
&= \sum_{k,j} (FV)_{ik}^* \otimes (\overline{F^{-1}} U_0)_{kj}
  (U_{0}^*)_{jr}
= \sum_k (FV)_{ik}^* \otimes \overline{F^{-1}}_{kr}
= (FVF^{-1})^*_{ir} \otimes 1. \end{align*}
Hence $\overline{U} = FVF^{-1}$ as claimed.

So we have shown that admissible unitary corepresentations of $C_0(\G)$
lift to admissible unitary corepresentations of $C_0^u(\G)$.
Finally, we argue as in \cite[Section~1.2]{dkss}.
Let $\mc B_u\subseteq M(C_0^u(\G))$ denote the space of elements
of all unitary corepresentations of $C_0^u(\G)$.  By the universal
property of $\mc U$, it follows that
\[ \mc B_u = \{ (\iota\otimes\omega\circ\phi)(\mc U) :
\phi:C_0^u(\hat\G) \rightarrow \mc B(H) \text{ is a non-degenerate
$*$-homomorphism }, \omega\in\mc B(H)_* \}. \]
Similarly define $\mc B\subseteq M(C_0(\G))$, so
\[ \mc B = \{ (\iota\otimes\omega\circ\phi)(\hat{\mc V}) :
\phi:C_0^u(\hat\G) \rightarrow \mc B(H) \text{ is a non-degenerate
$*$-homomorphism }, \omega\in\mc B(H)_* \}. \]
Then $\Lambda$ restricts to a surjection $\mc B_u\rightarrow\mc B$,
because $(\Lambda\otimes\iota)(\mc U) = \hat{\mc V}$.  We claim that
$\Lambda:\mc B_u\rightarrow\mc B$ is an injection, from which it will follow
certainly that $\Lambda : \mc{AP}(C_0^u(\G)) \rightarrow \mc{AP}(C_0(\G))$
is injective, as required.

Let $a=(\iota\otimes\omega\circ\phi)(\mc U) \in \mc B_u$ be non-zero.
So $\mu = \omega\circ\phi\in C_0^u(\hat\G)^*$ is non-zero.  Now,
$C_0^u(\hat\G)$ is the closed linear span
of $\{ (\tau\otimes\iota)(\hat{\mc V}) : \tau\in L^1(\G) \}$, and as
$\mu\not=0$, there is $\tau\in L^1(\G)$ with
$\ip{\mu}{(\tau\otimes\iota)(\hat{\mc V})} \not=0$.  Thus
$\Lambda(a) = (\iota\otimes\mu)(\hat{\mc V}) \not =0$, as required.

As $C(\G^\sap)$ is the completion of $\mc{AP}(C_0^u(\G))$ for the norm coming
from the action on $L^2(\G^\sap)$, it follows that $C(\G^\sap)$ is also the
completion of $\mc{AP}(C_0(\G))$, namely the reduced version of
$\mathbb{AP}(C_0(\G))$, and so the second claim follows.
\end{proof}

Consequently, it is enough to work in $C_0(\G)$.  Combining this proposition
with the observations of Section~\ref{sec:mor_lifts}, we have the following
commutative diagram, where now $\mc A$ denotes the Hopf $*$-algebra
associated with $\G^\sap$, which can now be identified the space of elements
of admissible representations of $C_0^u(\G)$, or equivalently, $C_0(\G)$,
\[ \xymatrix{ C^u(\G^\sap) \ar@{->>}[r] &
\mathbb{AP}(C_0^u(\G)) \ar@{->>}[rr]^-{\text{Restriction of }\Lambda_{\G}} &&
\mathbb{AP}(C_0(\G)) \ar@{->>}[r] &
C(\G^\sap) \\
&& \mc A \ar[ull] \ar[ul] \ar[ur] \ar[urr]
} \]
In Section~\ref{sec:cocomm_red} below, we shall see that in the cocommutative
case, it is possible to say when the horizontal surjections are actually
isomorphisms.

\begin{definition}\label{defn:curlyap_of_g}
For a locally compact quantum group $\G$, we write $\mc{AP}(\G)$ for the
unique dense Hopf $*$-algebra of $C(\G^\sap)$.  By the above, equivalently
this is the unique dense Hopf $*$-algebra of $\mathbb{AP}(C_0(\G))$, or
of $\mathbb{AP}(C_0^u(\G))$.
\end{definition}

We finish this section by showing a simple link between admissible
representations and the antipode $S$, thought of here as
a strictly-closed (unbounded) operator on $M(C_0(\G))$.

\begin{proposition}\label{prop:admiss_iff_dsinv}
Let $U=(U_{ij})\in \mathbb M_n(M(C_0(\G)))$ be a unitary corepresentation.
Then $U$ is admissible if and only if $U_{ij}^* \in D(S)$ for each $i,j$.
\end{proposition}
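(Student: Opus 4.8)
The plan is to relate admissibility of a unitary corepresentation $U$ to the behaviour of the antipode on the matrix elements $U_{ij}^*$, using the standard interplay between corepresentations, the antipode, and the convolution algebra. Recall that for a unitary corepresentation $U\in \mathbb M_n(M(C_0(\G)))$, we have $\Delta(U_{ij}) = \sum_k U_{ik}\otimes U_{kj}$, and so applying a functional $\omega\in L^1(\G)$ in the first leg, each $U_{ij}$ lies in the "nice" subspace where the antipode is well-behaved in a weak sense. In fact, the key classical fact (valid for any locally compact quantum group, see e.g.\ the Kustermans--Vaes theory) is that elements of the form $(\omega\otimes\iota)(W)$ — equivalently, matrix elements of unitary corepresentations — satisfy: $U_{ij}\in D(S)$ for all $i,j$, and $S(U_{ij}) = U_{ji}^*$. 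This is the quantum analogue of the fact that for a unitary representation $\pi$ of a group, $\pi(g^{-1}) = \pi(g)^*$. So for the corepresentation $U$ itself, $S$ always "inverts" it, giving $S(U_{ij}) = (U^*)_{ji} = \overline{U}_{ij}$, i.e.\ $S$ applied entrywise to $U$ yields $\overline{U}^{\top}$ (up to indexing conventions), which is a two-sided inverse to $U$ as a matrix. That handles $U$; the content of the proposition concerns $\overline{U} = (U_{ij}^*)$.

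The idea is: $U$ is admissible iff $U^{\top}$ is invertible iff $\overline{U}=(U_{ij}^*)$ is invertible (this equivalence was already noted in the proof of Proposition~\ref{prop:same_red_uni}, since $\overline{U}$ and $U^{\top}$ differ by the always-invertible operation of taking $U\mapsto S(U)$ entrywise on the unitary $U$, combined with $*$). Now $\overline{U}$ is itself a corepresentation (not an anti-corepresentation): indeed $\Delta(U_{ij}^*) = \sum_k U_{ik}^*\otimes U_{kj}^*$. Applying the antipode machinery to $\overline{U}$: if $U_{ij}^*\in D(S)$ for all $i,j$, then one expects $S(U_{ij}^*) = $ the $(j,i)$-entry of a matrix which is an inverse to $\overline{U}$. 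More precisely, the standard "corepresentation implies antipode inverts it" argument should show that whenever the entries of a corepresentation $V=(V_{ij})$ all lie in $D(S)$, the matrix $(S(V_{ji}))_{ij}$ is a two-sided inverse of $V$ — this follows from the fundamental identity $(S\otimes\iota)(\text{corep}) = (\text{corep})^{-1}$ applied formally, made rigorous by testing against functionals and using that $S$ is the generator of the relevant structure. Apply this to $V = \overline{U}$ to get invertibility of $\overline{U}$, hence admissibility of $U$. Conversely, if $U$ is admissible, then $\overline{U}$ is invertible, so by Woronowicz's theory (cited in the excerpt via \cite[Corollary~2.9]{soltan}, \cite[Proposition~2.7]{soltan}) $\overline{U}$ is similar to a unitary corepresentation $V_0 = F^{-1}\overline{U}F$; the entries of a unitary corepresentation always lie in $D(S)$, and since $S$ is strictly closed and linear, $U_{ij}^* = \sum_{s,t} F_{is} (V_0)_{st} (F^{-1})_{tj}$ is a scalar combination of elements of $D(S)$, hence in $D(S)$.

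The main obstacle I anticipate is making the "antipode inverts a corepresentation" step rigorous at the right level of generality, i.e.\ for a general (not a priori unitary) corepresentation whose entries merely happen to lie in $D(S)$, working in $M(C_0(\G))$ with $S$ an unbounded strictly-closed operator. The clean way is: for a unitary corepresentation $U$ one has $S((\omega\otimes\iota)(W)) = (\omega\otimes\iota)(W^*)$ and then transport along the corepresentation via its defining intertwining property, but for $\overline{U}$ one must argue directly from $\Delta(U_{ij}^*)=\sum_k U_{ik}^*\otimes U_{kj}^*$ together with the characterization of $D(S)$ via slices, $(\iota\otimes\omega)\Delta$, and the identity $(\iota\otimes\omega)((\iota\otimes\omega')\Delta(x)) \cdots$; applying $S$ and using $S((\iota\otimes\omega)\Delta(x)) = $ the appropriate slice should collapse the double sum to $\delta$. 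Once one trusts the bookkeeping, the computation is essentially the one already displayed in the proof of Proposition~\ref{prop:same_red_uni} (with $\mc V$ replaced by $W$, or directly with $S$), and the rest is routine.
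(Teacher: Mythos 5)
Your ``only if'' direction is fine, and is a legitimate variant of the paper's: where the paper invokes \cite[Proposition~5.45]{kv} to see that the inclusion $\mathbb{AP}(C_0(\G))\rightarrow M(C_0(\G))$ intertwines the Hopf $*$-algebraic antipode with $S$, you instead use that admissibility of $U$ makes $\overline{U}$ an invertible (indeed admissible, since $\overline{U}^{\top}=U^*$ is invertible) corepresentation, hence similar to a unitary one whose entries lie in $D(S)$ by the first part; linearity of $D(S)$ then finishes. Both routes work.

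The problem is the ``if'' direction, which is the substantive half and which you explicitly leave as ``the main obstacle''. The general principle you want --- that for any corepresentation $V$ with all entries in $D(S)$ the matrix obtained by applying $S$ entrywise is a two-sided inverse of $V$ --- is \emph{false} as stated (take $V=0$, or a direct sum with a zero block); it needs a non-degeneracy hypothesis on the associated representation of $L^1(\G)$, as in \cite[Theorem~4.7]{bds}, which the paper does invoke for exactly this purpose later (in the proof of Theorem~\ref{thm:pss_ap}) and which does hold for $\overline{U}$ here --- but you neither state that hypothesis nor verify it. Your pointer to ``the computation in the proof of Proposition~\ref{prop:same_red_uni}'' is also a red herring: that computation concerns lifting along $\mc V$ and never touches $S$. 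The paper closes the gap with a short direct argument that avoids any such general principle: from $U_{ij}^*\in D(S)$ one gets $U_{ij}\in D(S^{-1})$, and setting $V_{ij}=S^{-1}(U_{ji})$ one computes, using only anti-multiplicativity of $S^{-1}$, the identity $S(U_{jk})=U_{kj}^*$ (valid for any unitary corepresentation, and which you correctly established) and unitarity of $U$,
\[ \sum_k V_{ik}(U^\top)_{kj} = \sum_k S^{-1}(U_{ki})U_{jk}
= S^{-1}\Big(\sum_k S(U_{jk})U_{ki}\Big)
= S^{-1}\Big(\sum_k U_{kj}^*U_{ki}\Big) = \delta_{i,j}1, \]
and similarly $U^\top V=1$. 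No counit and no ``antipode inverts corepresentations'' identity is needed; you should compare this with your sketch and note that the inverse is built from $S^{-1}$ applied to the entries of $U$ itself, not from $S$ applied to the entries of $\overline{U}$.
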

\begin{proof}
For fixed $i,j$, as $U$ is a corepresentation
and is unitary,
\begin{align*} \sum_k \Delta(U_{ik}) (1\otimes U^*_{jk})
= \sum_{k,l} U_{il} \otimes U_{lk} U^*_{jk}
= \sum_l U_{il} \otimes (UU^*)_{lj} = U_{ij} \otimes 1.
\end{align*}
Similarly,
\begin{align*} \sum_k (1\otimes U_{ik}) \Delta(U^*_{jk})
= \sum_{k,l} U^*_{jl} \otimes U_{ik} U^*_{lk}
= \sum_l U^*_{jl} \otimes (UU^*)_{il} = U^*_{ji} \otimes 1. \end{align*}
It follows from \cite[Corollary~5.34, Remark~5.44]{kv} that $U_{ij} \in D(S)$
with $S(U_{ij}) = U_{ji}^*$.

Suppose now that $U_{ij}^*\in D(S)$ for all $i,j$.  Then
$U_{ij}\in D(S^{-1})$, so we may set $V_{ij} = S^{-1}(U_{ji})$.  Then
\[ \sum_k V_{ik} (U^\top)_{kj} = \sum_k S^{-1}(U_{ki}) U_{jk}
= S^{-1}\Big( \sum_k S(U_{jk}) U_{ki} \Big)
= S^{-1}\Big( \sum_k U_{kj}^* U_{ki} \Big) = \delta_{i,j} 1, \]
so that $VU^\top = 1$.  Similarly, $U^\top V=1$, so $U^\top$ is invertible.

Conversely, if $U$ is admissible, then the elements of $U$ will belong
to the Hopf $*$-algebra $\mc A$ associated to $\mathbb{AP}(C_0(\G))$.
By applying \cite[Proposition~5.45]{kv} (compare \cite[Proposition~4.11]{soltan})
to the inclusion Hopf $*$-homomorphism $\mathbb{AP}(C_0(\G)) \rightarrow
M(C_0(\G))$ we see that this inclusion will intertwine the antipode on
$\mc A$ and $S$.  In particular, $S$ will restrict to a bijection
$\mc A\rightarrow\mc A$, and so the ``only if'' claim follows.
\end{proof}

\begin{remark}
Thanks to \cite[Proposition~9.6]{kus1}, the same result holds for unitary
corepresentations of $C_0^u(\G)$ if we use the universal antipode $S_u$.

We also remark that the proof that matrix elements of unitary
corepresentations of a compact quantum group form a Hopf $*$-algebra
ultimately relies upon the fact that if $U$ is a unitary corepresentation,
then $\overline{U}$, or equivalently $U^\top$, is similar to a unitary
corepresentation (equivalently, anti-corepresentation).  Indeed, we
used this fact in the proof of Proposition~\ref{prop:same_red_uni} above.
This point was, we feel, slightly skipped in \cite[Remark~2.3(2)]{soltan}
and explains why the argument given there does not work (directly) for
\emph{locally} compact quantum groups.
\end{remark}

\begin{remark}
If $\G$ is a Kac algebra (or, more generally, if $S=R$) then
the previous proposition gives a simple proof that any unitary corepresentation
is admissible.  In particular, this answers the implicit question before
\cite[Proposition~4.6]{soltan}.
\end{remark}

\begin{remark}
Let us just remark that everything in this section applies equally well to
quantum groups coming from manageable multiplicative unitaries-- simply
replace references to \cite{kus1} by the appropriate results to be found
in \cite{sw} and \cite{mrw}.
\end{remark}

\section{Representation free, and Banach algebraic, techniques}\label{sec:ba_sec}

When $G$ is a locally compact group, the algebra $\mathbb{AP}(C_0(G))$
coincides with the classical algebra of \emph{almost periodic functions},
namely those functions $f\in C^b(G)$ such that the collection of left
(or right) translates of $f$ forms a relatively compact subset of
$C^b(G)$, see \cite[Section~4.3]{berg} for example.

There is a classical and well-studied link with Banach algebras here.
Consider the algebra $L^1(G)$ and turn $C^b(G)$ into an $L^1(G)$ bimodule
in the usual way; we shall denote the module actions by $\star$.  Then
a simple argument using the bounded approximate identity for $L^1(G)$,
together with the fact that a subset of a Banach space is relatively
compact if and only if its absolutely convex hull is, shows that
a function $f$ is almost periodic if and only if the orbit map $L^1(G)
\rightarrow C^b(G); a\mapsto a\star f$ (or $f\star a$) is a compact linear
map.  In fact, identifying $C^b(G)$ with a subalgebra of $L^\infty(G)$, we
obtain the same class by looking at those $f\in L^\infty(G)$ with
$L^1(G)\rightarrow L^\infty(G); a\mapsto a\star f$ being compact
(compare with the arguments of \cite{ulger} or \cite[Lemma~5.1]{chou}
for example).

We are hence lead to consider the following definition.

\begin{definition}
Let $\mf A$ be a Banach algebra, and turn $\mf A^*$ into an $\mf A$-bimodule
in the usual way.  A functional $\mu\in\mf A^*$ is \emph{almost periodic}
if the orbit map $\mf A\rightarrow\mf A^*; a\mapsto a\star \mu$ is
compact.  We write $AP(\mf A)$ for the collection of such functionals.
\end{definition}

For $\mu\in\mf A^*$, define $L_\mu,R_\mu:\mf A\rightarrow\mf A^*$ by
$R_\mu(a)=a\star\mu$ and $L_\mu(a)=\mu\star a$.  With $\kappa:\mf A\rightarrow
\mf A^{**}$ being the canonical map, we find that $L_\mu^*\circ\kappa =
R_\mu$ and $R_\mu^*\circ\kappa = L_\mu$, and so $L_\mu$ is compact if and only
if $R_\mu$ is compact.  We remark that some authors write $AP(\mf A^*)$ instead
of $AP(\mf A)$.

When $\mf A=L^1(G)$, we hence recover the classical notion of an almost
periodic function.  It is thus very tempting to use the same definition
for any locally compact quantum group.  Indeed, early on in the development
of the Fourier Algebra, the definition of $AP(\hat G) = AP(A(G))$ was
made (we believe for the first time in \cite[Chapter~7]{dr1}).

Let us quickly recall some notation.  For a locally compact group $G$
we define $L^\infty(\hat G)$ to be the group von Neumann algebra $VN(G)$,
which is generated by the left translation maps $\{\lambda(s):s\in G\}$ acting
on $L^2(G)$.  In the setup of Kac algebras or locally compact quantum groups,
this is the dual to the commutative algebra $L^\infty(G)$.  The predual of
$VN(G)$ is the Fourier algebra $A(G)$, as defined by Eymard, \cite{eymard}
(compare also \cite[]{tak2}).  Generally one thinks of $A(G)$ as being a
commutative Banach algebra, in fact, a subalgebra of $C_0(G)$.  We remark that
the locally compact quantum group convention would be to consider multiplicative
unitary $W$ for $VN(G)$, and then to consider the ``left-regular representation'',
the map $A(G)=VN(G)_*\rightarrow C_0(G); \omega\mapsto (\omega\otimes\iota)(W)$
(which is also used in \cite[Section~3, Chapter~VII]{tak2}).
Concretely, we identify the functional
$\omega$ with the continuous function $G\rightarrow\mathbb C; t\mapsto
\ip{\lambda(t^{-1})}{\omega}$.  We warn the reader that Eymard instead considers
the map $t\mapsto \ip{\lambda(t)}{\omega}$.

It was recognised that it ``should be'' the case that $AP(\hat G) =
C^*_\delta(G)$, the C$^*$-subalgebra of $VN(G)$ generated by the left
translation operators.  This is indeed quantum Bohr compactification,
see \cite[Section~4.2]{soltan} and Section~\ref{sec:cocomcase} below.
An excellent reference here is Chou's paper \cite{chou}.

\begin{theorem}\label{thm:chou}
We have the following facts:
\begin{enumerate}
\item $AP(\hat G) = C^*_\delta(G)$ if $G$ is abelian \cite{dr1},
or discrete and amenable (which follows easily from \cite[Proposition~2]{gr1}).
\item there exists a compact group $G$ such that $C^*_\delta(G)
\not= AP(\hat G)$.  This is \cite[Theorem~3.5]{chou}, but be aware of some
errors in preliminary results; these errors are partly corrected in
\cite{rindler}; in particular \cite[Proposition~1]{rindler} shows the result
we are interested in.
\item if $H$ is an open normal subgroup of $G$ with $G/H$ amenable, and
with $C^*_\delta(H) = AP(\hat H)$, then also $C^*_\delta(G) = AP(\hat G)$,
\cite[Theorem~4.4]{chou}.
\end{enumerate}
\end{theorem}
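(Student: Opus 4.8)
Since the statement assembles results already in the literature, the plan is to spell out how each clause follows, the only real work being the translation between the Banach-algebraic description $AP(\hat G)=AP(A(G))$ and the concrete C$^*$-algebra $C^*_\delta(G)\subseteq VN(G)$. Recall from Section~\ref{sec:ba_sec} that, identifying $A(G)=VN(G)_*$ with coefficient functions, a functional $\mu\in VN(G)$ lies in $AP(\hat G)$ exactly when the orbit map $A(G)\to VN(G)$, $a\mapsto a\star\mu$, is compact, and that $C^*_\delta(G)$ is the closed linear span of $\{\lambda(s):s\in G\}$.

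For (1), the abelian case is a translation exercise: the Fourier transform carries $A(G)$ onto $L^1(\hat G)$ and $VN(G)$ onto $L^\infty(\hat G)$, sending $\lambda(s)$ to evaluation at $s$, so that $C^*_\delta(G)$ becomes the closed span of the characters of $\hat G$, i.e. the classical algebra of almost periodic functions on the group $\hat G$ (equivalently $C((\hat G)^\sap)$); on the other side $AP(A(G))=AP(L^1(\hat G))$ is this same algebra, by the classical equivalence recalled above. This is \cite{dr1}. For $G$ discrete and amenable one first notes $C^*_\delta(G)=C^*_r(G)$, since the $\lambda(s)$ already span a unital $*$-subalgebra of $VN(G)$; I would then deduce $AP(\hat G)=C^*_r(G)$ from \cite[Proposition~2]{gr1}, using that amenability furnishes a bounded approximate identity for $A(G)$ consisting of finitely supported normalised positive-definite functions, against which one checks that each orbit map $a\mapsto a\star\lambda(s)$ is compact, while conversely any almost periodic $\mu$ is forced into the norm-closure of the span of the $\lambda(s)$.

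For (2) the plan is simply to present Chou's construction \cite[Theorem~3.5]{chou}: for a suitable compact group $G$ the decomposition of $VN(G)$ over irreducibles makes $C^*_\delta(G)$ visibly small, and one exhibits $\mu\in VN(G)$ whose orbit map on $A(G)$ is compact yet which is not in $C^*_\delta(G)$; heeding the author's warning, one should run this through the corrected statement \cite[Proposition~1]{rindler} rather than the flawed intermediate lemmas of \cite{chou}. For (3), the plan is the hereditary argument of \cite[Theorem~4.4]{chou}: since $H$ is open, restriction of coefficient functions is a surjection $A(G)\to A(H)$, $VN(H)$ sits inside $VN(G)$, and $VN(G)$ has a crossed-product-type structure over $VN(H)$ by the discrete amenable group $G/H$; amenability of $G/H$ (which gives $A(G/H)$ a bounded approximate identity and makes the reduced and full crossed products coincide) lets one reduce compactness of an orbit map on $A(G)$ to the corresponding statement over $A(H)$ twisted by finitely many cosets, whereupon the hypothesis $C^*_\delta(H)=AP(\hat H)$ finishes the job. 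The genuine content, and the main obstacle, is concentrated in (2) and (3): (2) is an honest construction rather than a formal deduction, and the averaging and coefficient-function bookkeeping behind (3) are delicate — these are precisely the points where one leans on \cite{chou,rindler} instead of rederiving everything.
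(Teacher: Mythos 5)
The paper offers no proof of this theorem at all: it is a summary of literature results, with the relevant citations (\cite{dr1}, \cite{gr1}, \cite{chou}, \cite{rindler}) built into the statement itself. Your proposal takes essentially the same approach --- deferring the substantive content of parts (2) and (3) to Chou and Rindler and deducing part (1) by the standard Fourier-transform translation and the bounded-approximate-identity argument --- and the routine deductions you do spell out (the identification of $C^*_\delta(G)$ with the span of characters of $\hat G$ in the abelian case, the rank-one orbit maps of the $\lambda(s)$, and the use of a finitely supported b.a.i.\ in the discrete amenable case) are correct.
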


Remarkably, in full generality, it is still unknown if $AP(\hat G)$ is
even a C$^*$-algebra, never-mind whether it satisfies any obvious
interpretation as a ``compactification''.
Recently Runde suggested a new definition of ``almost periodic'' which
takes account of the Operator Space structure of $A(G)$.  We use standard
notions from the theory of Operator Spaces, see \cite{er} for example.
In particular, if $M$ is a von Neumann algebra and $M_*$ its predual, then
the space of completely bounded maps $M_*\rightarrow M$, denoted
$\mc{CB}(M_*,M)$, can be identified with the dual space of the operator
space projective tensor product, $M_* \proten M_*$, and with the von Neumann
tensor product $M\vnten M$.  That is, we identify $T\in \mc{CB}(M_*,M)$,
$\mu\in (M_* \proten M_*)^*$ and $y\in M\vnten M$ by the relations
\[ \ip{T(\omega)}{\tau} = \ip{\mu}{\omega\otimes\tau}
= \ip{y}{\omega\otimes\tau} \qquad (\omega,\tau\in M_*). \]
There are analogous constructions given by slicing $y$ on the right;
see \cite[Chapter~7]{er}.

For a map between operator spaces, there are a number of notions of being
``compact'', namely \emph{completely compact} and \emph{Gelfand compact}, see
\cite[Section~1]{runde} and references therein for a discussion.  In general
these are distinct, but when mapping into a dual, \emph{injective} operator
space, they coincide with the notation of being the completely-bounded-norm
limit of finite-rank operators (much as, in the presence of the approximation
property, a compact map between Banach spaces is the norm limit of finite-rank
maps).  For a completely contractive Banach algebra $\mf A$, Runde makes the
following definitions:

\begin{definition}
A completely bounded map $T:E\rightarrow F$ between operator spaces is
\emph{completely compact} if for each $\epsilon>0$ there is a
finite-dimensional subspace $Y$ of $F$ such that, with $Q:F\rightarrow F/Y$
the quotient map, $\|QT\|_{cb} <\epsilon$.

For $\mu\in\mc A^*$ say that $\mu$ is \emph{completely almost periodic},
denoted by $\mu\in\CAP(\mf A)$, if both $L_\mu$ and $R_\mu$ are completely
compact.
\end{definition}

Consider the case when $(M,\Delta)$ is a Hopf von-Neumann algebra and
$\mf A=M_*$ with the canonical operator space structure.  Then, if $M$ is
an injective von Neumann algebra, \cite[Theorem~2.4]{runde} shows that
$\CAP(\mf A)$ is a C$^*$-subalgebra of $M$.  Indeed, the proof shows that
\[ \CAP(\mf A) = \{ x\in M : \Delta(x) \in M \otimes M \}, \]
where here $\otimes$ denotes the minimal C$^*$-algebra tensor product,
namely the norm closure of $M\odot M$ in $M\vnten M$.  In particular, this
applies to $\mf A=A(G)$ when $G$ is an amenable or connected locally compact
group, \cite[Corollary~2.5]{runde}.

Hence for ``nice'' $\G$, Runde's algebra $\CAP(L^1(\G))$ agrees with those
$x\in L^\infty(\G)$ such that
\[ L_x : L^1(\G)\rightarrow L^\infty(\G); \quad \omega \mapsto
x \star \omega = (\omega\otimes\iota)\Delta(x) \]
can be cb-norm approximated by finite-rank maps.  Equivalently, this means
that $\Delta(x)$ can be norm approximated by finite-rank tensors in
$M\vnten M$.

\subsection{Counter-examples}\label{sec:e2}

The counter-example considered by Chou is as follows: for a compact group $G$,
let $E$ be the rank-one orthogonal projection onto the constant functions in
$L^2(G)$.  Then $E\in VN(G)$ and it is possible to analyse closely the orbit
map $A(G)\rightarrow VN(G); \omega\mapsto\omega\cdot E$, in particular,
$E\in AP(\hat G)$ if and only if $G$ is tall.  However, a careful
calculation shows that Chou's argument \cite[Proposition~3.1]{chou} (compare
also \cite{dr2}) does extend to $\CAP(\hat G)$.  We plan to explore in future
work if we can characterise when $E\in\CAP(\hat G)$; compare also with
Theorem~\ref{thm:cap_partial_case} below.

Instead, we now turn our attention to the fully quantum case, and explore
a remarkable result of Woronowicz in \cite{woro4}.  The quantum $E(2)$ group
was defined in \cite{woro5}, see also \cite{ps,vw}.

\begin{theorem}
Let $q\in (0,1)$ and let $\G$ be the quantum $E(2)$ group with parameter $q$.
Then $\CAP(\G)$ strictly contains $\mathbb{AP}(C_0(\G))\cong C(\G^\sap)$.
\end{theorem}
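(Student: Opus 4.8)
The plan is to identify $\mathbb{AP}(C_0(\G))$ concretely for the quantum $E(2)$ group, and then exhibit an element $x \in L^\infty(\G)$ with $\Delta(x) \in L^\infty(\G)\otimes L^\infty(\G)$ (minimal C$^*$-tensor product) which does not lie in $\mathbb{AP}(C_0(\G))$. First I would recall the structure of the quantum $E(2)$ group from \cite{woro5,woro4}: $C_0(\G)$ is generated by a normal operator $v$ and a unitary $n$ with the commutation relation $vn = q^2 nv$ (so $n$ normalises the spectral projections of $|v|$), and $\Delta$ is determined by $\Delta(v) = v\otimes v$ and $\Delta(n) = n\otimes v + 1\otimes n$ (up to the usual conventions). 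The unitary $v$, being grouplike, is a one-dimensional unitary corepresentation, but it is \emph{not} admissible: the antipode does not behave well on $v^*$, and in fact one checks via Proposition~\ref{prop:admiss_iff_dsinv} that $v^* \notin D(S)$, because the scaling group acts nontrivially and unboundedly. This is precisely Woronowicz's observation in \cite{woro4} that $\mathbb{AP}(C_0(\G))$ is trivial (or very small — just $\mathbb{C}1$, since $\G$ is non-compact, non-discrete, "irreducible" in the relevant sense, with no nontrivial admissible corepresentations at all).

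Next I would show that $\CAP(\G)$ is strictly larger by producing at least one nontrivial element. The natural candidate is a suitable bounded function of $v$ — say a spectral projection, or a bounded Borel function $f(v)$ — chosen so that $\Delta(f(v)) = (f\circ m)(v\otimes v)$, where $m$ denotes multiplication, actually lies in the minimal tensor product $C_0(\G)\otimes C_0(\G)$ rather than merely in $L^\infty(\G)\vnten L^\infty(\G)$. Since $\Delta(v) = v\otimes v$ and $v$ is normal with spectrum an annulus (scaled by powers of $q$), one can use the Borel functional calculus: for an appropriate $f$ vanishing at infinity on the spectrum of $v\otimes v$, the element $f(v\otimes v)$ will sit in the C$^*$-algebra generated by $v\otimes v$, hence in $C_0(\G)\otimes C_0(\G) \subseteq L^\infty(\G)\otimes L^\infty(\G)$. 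The key point, which is really where Woronowicz's paper \cite{woro4} does the work, is that there exist genuinely nontrivial such $f$ — this is a delicate fact about the geometry of the spectrum of $v$ and of $v\otimes v$ under the quantum $E(2)$ relations, and it is the heart of why $\CAP$ fails to be a reasonable notion of compactification in the fully quantum setting. Granting that, $f(v) \in \CAP(\G)$ by the characterisation $\CAP(L^1(\G)) = \{x : \Delta(x) \in L^\infty(\G)\otimes L^\infty(\G)\}$ recalled before Section~\ref{sec:e2}, provided $L^\infty(\G)$ is injective — which it is, being a direct integral / crossed-product type von Neumann algebra for this amenable-type quantum group, so \cite[Theorem~2.4]{runde} applies.

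Finally, I would assemble the strict inclusion: $\mathbb{AP}(C_0(\G)) = \mathbb{C}1$ (no nontrivial admissible unitary corepresentations, by the antipode obstruction above and Proposition~\ref{prop:admiss_iff_dsinv}), whereas $\CAP(\G)$ contains the nontrivial element $f(v)$, so $\mathbb{AP}(C_0(\G)) \subsetneq \CAP(\G)$. The isomorphism $\mathbb{AP}(C_0(\G)) \cong C(\G^\sap)$ is immediate from the discussion after Definition~\ref{defn:curlyap_of_g} (here $\mathbb{AP}(C_0(\G))$ is already reduced, being just $\mathbb{C}1$). The main obstacle is the middle step: verifying that there really is a nontrivial bounded function $f$ with $f(v\otimes v) \in C_0(\G)\otimes C_0(\G)$. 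This is not a routine functional-calculus exercise — it depends on the precise description of how the annular spectrum of $v$ interacts with the tensor product and the $q^2$-commutation with $n$, and it is exactly the content imported from \cite{woro4}; I would cite that result rather than reprove it, and spend the bulk of the written proof on translating it into the operator-algebraic language of this paper and on confirming the injectivity of $L^\infty(\G)$ so that Runde's characterisation is available.
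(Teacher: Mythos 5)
Your proposal has two genuine errors, and they interact badly. First, the claim that $\mathbb{AP}(C_0(\G))=\mathbb{C}1$ because the grouplike unitary $v$ fails to be admissible is wrong. A one-dimensional unitary corepresentation is automatically admissible: its transpose is itself, hence invertible (equivalently, a grouplike element is automatically unitary and lies in $D(S)\cap D(S)^*$ with $S(v)=v^*$, as noted in the discussion of Conjecture~\ref{conj:one}). So $v$ and all its powers are matrix elements of admissible corepresentations, and the real content on the $\mathbb{AP}$ side is to show there is nothing \emph{else}: one must invoke Woronowicz's classification \cite[Theorem~2.1]{woro5} of unitary corepresentations, observe that the relation $u^*Nu=N+2$ on $(\ker b)^\perp$ is impossible for bounded operators in finite dimensions, conclude $b=0$, and hence that every finite-dimensional unitary corepresentation diagonalises with entries powers of $v$. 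This gives $\mathbb{AP}(C_0(\G))=\overline{\lin}\{v^k:k\in\mathbb{Z}\}\cong C(\mathbb{T})$, so $\G^\sap$ is the circle group -- small, but certainly not trivial. (You have also swapped the generators: $v$ is the unitary and $n$ is the normal element with annular spectrum; bounded Borel functions of the latter are not grouplike, since $\Delta(n)\neq n\otimes n$.)

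Second, because of the first error, your proposed witness for strictness fails. For continuous $f$ on $\mathbb{T}$, the element $f(v)$ lies in the norm-closed span of $\{v^k\}$, which is exactly $\mathbb{AP}(C_0(\G))$, so it cannot separate $\CAP(\G)$ from $\mathbb{AP}(C_0(\G))$; for merely Borel $f$ there is no reason $\Delta(f(v))=f(v\otimes v)$ should be norm-approximable by finite-rank tensors at all. The correct witness, and the actual point of \cite{woro4}, is the whole of $C_0(\G)$: Woronowicz proves that $\Delta(x)\in C_0(\G)\otimes C_0(\G)$ (no multiplier algebra) for every $x\in C_0(\G)$, whence $C_0(\G)\subseteq\CAP(\G)$ -- and this inclusion needs no injectivity hypothesis, since $\Delta(x)\in L^\infty(\G)\otimes L^\infty(\G)$ already forces $L_x$ and $R_x$ to be cb-limits of finite-rank maps, hence completely compact. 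Since the noncommutative algebra $C_0(\G)$ is not contained in the commutative algebra $C^*(v)\cong C(\mathbb{T})$, strictness follows. So the overall architecture of your argument (compute $\mathbb{AP}$, then exhibit extra elements of $\CAP$) is right, but both halves need to be replaced as above.
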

\begin{proof}
We use two results of Woronowicz.  Firstly, \cite{woro4} (see especially
Section~4) shows that for all $x\in C_0(\G)$, we have that $\Delta(x)
\in C_0(\G) \otimes C_0(\G)$; notice the lack of a multiplier algebra!
So immediately we see that $C_0(\G) \subseteq \CAP(\G)$.

Secondly, we use the classification of unitary corepresentations of
$C_0(\G)$ given in \cite[Theorem~2.1]{woro5}.  In finite-dimensions, the
classification is very restrictive.  Indeed, let $U\in M(C_0(\G))\otimes
\mathbb M_n$ be a finite-dimensional unitary corepresentation.  Then
there exist matrices $N,b\in\mathbb M_n$ with $N$ self-adjoint, $b$ normal,
and $N,|b|$ commuting.  Furthermore, if $b$ has
polar decomposition $b=u|b|$, then on $(\ker b)^\perp$, we have that
$u^*Nu=N+2$.  Clearly this cannot hold for \emph{bounded} operators, so
in this finite-dimensional setting, $(\ker b)^\perp=\{0\}$ so $b=0$.
Then \cite[Theorem~2.1]{woro5} further gives an expression for $U$ in
terms of $b$ and $N$.  However, as $b=0$, it follows that actually $U \in
\mathbb M_n(M(C_0(\G)))$ diagonalises, with diagonal entries powers of
$v\in M(C_0(\G))$.  Here $v$ is a unitary, one of the operators which
``generates'' $C_0(\G)$, compare \cite[Theorem~1.1]{woro5}.  We know
that $\Delta(v)=v\otimes v$ by \cite[Theorem~1.2]{woro5}, and so $v$ is
a one-dimensional, admissible, corepresentation.

It follows from this discussion that $\mc{AP}(\G)$ is spanned by
$\{ v^k : k\in\mathbb Z \}$ and so $\G^\sap$ is isomorphic to the circle
group (and hence is a classical compact group).  In particular,
$\mathbb{AP}(C_0(\G))$ is (much) smaller than $\CAP(\G)$.
\end{proof}

Let us draw one further conclusion from this.  Let $A\subseteq M(C_0(\G))$
be the maximal compact quantum semigroup; so $A$ is the maximal unital
C$^*$-subalgebra with $\Delta(A)\subseteq A\otimes A$.  That $A$ exists follows
from a free-product argument, compare \cite{wang}: if $B,C\subseteq
M(C_0(\G))$ are two such unital algebras, then the image of the free-product
$B*C$ in $M(C_0(\G))$ will be a unital C$^*$-bialgebra containing both $B$ and
$C$.  If $\G=G$ is a classical (semi)group then $A$ will be commutative, with
character space $K$ say, and $\Delta$ restricted to $A$ will induce a
continuous semigroup structure on $K$.  It follows that $K$ is actually the
``almost periodic'' compactification of $G$, compare \cite[Chapter~4.1]{berg}.
If $G$ were a group, then this agrees with the Bohr compactification.
However, if now $\G$ is again the quantum $E(2)$ group, then this maximal
$A$ will certainly contain $C_0(\G)\oplus\mathbb C 1$; in particular, again
$A$ will be (much) larger than $\mathbb{AP}(C_0(\G))$.

\subsection{Stronger notions}

Thus it appears that, in complete generality, even the notion of
``completely almost periodic'' is not strong enough to single out
$\mathbb{AP}(C_0(\G))$.  We shall make a stronger definition; the link with
Runde's definition is clarified in Proposition~\ref{prop:one} below.

\begin{definition}
Say that $x\in L^\infty(\G)$ is \emph{periodic} if $\Delta(x)$ is
a finite-rank tensor in $L^\infty(\G) \vnten L^\infty(\G)$.  Denote the
collection of periodic elements of $L^\infty(\G)$ by $\mc{P}^\infty(\G)$,
and denote the norm closure, in $L^\infty(\G)$, by $\mathbb{P}^\infty(\G)$.
\end{definition}

Note that similar definitions would hold for $M(C_0(\G))$ (or indeed for
$M(C_0^u(\G))$) and at this level of generality, it is not clear if they would
yield the same objects; thus we choose to emphasis this by using the notation
$\mc{P}^\infty$ not $\mc P$, and so forth.  Compare with Remark~\ref{rem:one}
and the comment after Theorem~\ref{thm:ss_okay_cmpt}, which suggest that
working with $M(C_0(\G))$ should probably give the same result, but that
$M(C_0^u(\G))$ might not.

\begin{remark}
In the purely algebraic setting, working with multiplier Hopf algebras,
So{\l}tan studied a very similar ideas for discrete quantum groups in
\cite{soltan1} (see also \cite[Proposition~4.6]{soltan}).
\end{remark}

\begin{lemma}
Let $\G$ be a locally compact quantum group.  Then
\[ \mc{P}^\infty(\G)
= \{ x\in L^\infty(\G) : L_x\text{ is finite-rank}\}
= \{ x\in L^\infty(\G) : R_x\text{ is finite-rank}\}. \]
Furthermore, $\mathbb{P}^\infty(\G)$ is a C$^*$-subalgebra of $L^\infty(\G)$,
and an $L^1(\G)$-submodule of $L^\infty(\G)$.
\end{lemma}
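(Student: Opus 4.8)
The plan is to prove the three assertions in order: first the characterisation of $\mc{P}^\infty(\G)$ in terms of finite-rank orbit maps, then that $\mathbb{P}^\infty(\G)$ is a C$^*$-subalgebra, then that it is an $L^1(\G)$-submodule.

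For the first assertion, recall the identification of $\mc{CB}(L^1(\G),L^\infty(\G))$ with $L^\infty(\G)\vnten L^\infty(\G)$ via $\ip{L_x(\omega)}{\tau} = \ip{\Delta(x)}{\omega\otimes\tau}$. Under this identification, an element $y\in L^\infty(\G)\vnten L^\infty(\G)$ is a finite-rank tensor $\sum_{i=1}^n a_i\otimes b_i$ precisely when the associated map $L^1(\G)\to L^\infty(\G)$, $\omega\mapsto \sum_i \ip{a_i}{\omega} b_i$, has finite rank; and conversely a finite-rank normal completely bounded map $L^1(\G)\to L^\infty(\G)$ corresponds to a finite-rank tensor (one expands the range over a basis and uses that the coordinate functionals are weak$^*$-continuous, hence given by elements of $L^\infty(\G)$). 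So $\Delta(x)$ is a finite-rank tensor iff $L_x$ is finite-rank. For the symmetry between $L_x$ and $R_x$, I would invoke the general fact noted just after the definition of $AP(\mf A)$: with $\kappa$ the canonical embedding, $L_x^*\kappa = R_x$ and $R_x^*\kappa = L_x$, so one is finite-rank iff the other is (the rank is preserved under taking adjoints and precomposing with an injection). Alternatively, and perhaps more transparently, one uses the unitary antipode $R$: applying $R\otimes R$ and flipping legs intertwines $\Delta$ with $\Delta^{\op}$, so $\Delta(x)$ finite-rank iff $\Delta^{\op}(x)$ finite-rank, which is the statement about $R_x$.

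For the algebra claim, first note $\mc P^\infty(\G)$ is a $*$-subalgebra of $L^\infty(\G)$: it is a linear subspace since $\Delta$ is linear and the finite-rank tensors form a subspace; it is closed under $*$ since $\Delta$ is a $*$-homomorphism and $(\sum a_i\otimes b_i)^* = \sum a_i^*\otimes b_i^*$; and it is closed under products since $\Delta$ is multiplicative and the product of two finite-rank tensors is again finite-rank. Then $\mathbb{P}^\infty(\G)$, being the norm closure of a $*$-subalgebra, is automatically a C$^*$-subalgebra. The only subtlety is that one should check the product and $*$ operations behave well under norm limits, but that is immediate because multiplication and involution are norm-continuous on $L^\infty(\G)$.

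For the module claim, I would show $\mc P^\infty(\G)$ is already an $L^1(\G)$-submodule, whence the closure is too. Given $x\in\mc P^\infty(\G)$ with $\Delta(x) = \sum_{i=1}^n a_i\otimes b_i$ and $\omega\in L^1(\G)$, I compute $\Delta(\omega\star x)$. Using coassociativity, $\Delta(\omega\star x) = \Delta((\omega\otimes\iota)\Delta(x)) = (\omega\otimes\iota\otimes\iota)(\Delta\otimes\iota)\Delta(x) = (\omega\otimes\iota\otimes\iota)(\iota\otimes\Delta)\Delta(x) = (\omega\otimes\iota\otimes\iota)\big(\sum_i a_i\otimes\Delta(b_i)\big) = \sum_i \ip{a_i}{\omega}\,\Delta(b_i)$, which is a finite sum of elements of $L^\infty(\G)\vnten L^\infty(\G)$, hence still a finite-rank tensor; so $\omega\star x\in\mc P^\infty(\G)$. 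The same argument on the other side (using $(\Delta\otimes\iota)\Delta = (\iota\otimes\Delta)\Delta$ and slicing in the other leg, or invoking the $R_x$ description) handles $x\star\omega$. Taking norm closures, and using that the module actions are norm-continuous (indeed contractive) in the $L^\infty(\G)$-variable, gives that $\mathbb{P}^\infty(\G)$ is an $L^1(\G)$-submodule. The main obstacle, such as it is, is purely bookkeeping: making the identification "finite-rank tensor $\leftrightarrow$ finite-rank cb map" precise, in particular verifying that a finite-rank normal map is genuinely represented by a tensor lying in $L^\infty(\G)\vnten L^\infty(\G)$ (rather than some larger completion) — but this follows directly from the duality $\mc{CB}(L^1(\G),L^\infty(\G)) \cong L^\infty(\G)\vnten L^\infty(\G)$ recalled in the text.
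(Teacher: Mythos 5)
Your proposal is correct and follows essentially the same route as the paper: the finite-rank map characterisation is obtained by expanding the image of $L_x$ over a basis and noting the coordinate functionals lie in $L^\infty(\G)$, the C$^*$-algebra claim comes directly from $\Delta$ being a $*$-homomorphism, and the module claim from coassociativity (the paper phrases this as $L_{\omega\star x}=T_\omega\circ L_x$ and $L_{x\star\omega}=L_x\circ S_\omega$, which is your tensor computation in operator form). The symmetry between $L_x$ and $R_x$ via $R_x=L_x^*\circ\kappa$ is exactly the alternative the paper also records.
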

\begin{proof}
It follows immediately from the definition that $\mathbb{P}^\infty(\G)$ is a
C$^*$-algebra.  As $L_x(\omega) = x\star\omega =
(\omega\otimes\iota)\Delta(x)$, it is easy to see that if $\Delta(x)$ is a
finite-rank tensor, then $L_x$ is a finite-rank map.  Conversely, if
$L_x$ is finite-rank, then let $\{ x_i \}$ be a basis for the image of
$L_x$.  For each $\omega\in L^1(\G)$, there hence exist unique scalars
$\{a_i\}$ with $x\star\omega = \sum_i a_i x_i$.  Then the map $\omega\mapsto
a_i$ is bounded and linear, so there are $y_i$ in $L^\infty(\G)$ with
$x\star\omega = \sum_i \ip{y_i}{\omega} x_i$.  Equivalently,
$\Delta(x) = \sum_i y_i \otimes x_i$ is a finite-rank tensor.  An analogous
argument holds for $R_x$; or use again that $R_x = L_x^*\circ\kappa$ and
$L_x = R_x^*\circ\kappa$ (where $\kappa$ is the inclusion
$L^1(\G)\rightarrow L^1(\G)^{**}$).

To show that $\mathbb{P}^\infty(\G)$ is an $L^1(\G)$-submodule, it suffices
to show that $\mc{P}^\infty(\G)$ is a submodule.  For $\omega\in L^1(\G)$,
let $T_\omega: L^\infty(\G)\rightarrow L^\infty(\G)$ be the map
$x\mapsto \omega\star x$.  Then $L_{\omega\star x} = T_\omega \circ L_x$,
so $x\in\mc{P}^\infty(\G) \implies \omega\star x\in\mc{P}^\infty(\G)$.
Similarly, let $S_\omega:L^1(\G)\rightarrow L^1(\G)$ be the map $\tau
\mapsto \omega\tau$.  Then $L_{x\star\omega} = L_x \circ S_\omega$, and it
follows that $\mc{P}^\infty(\G)$ is also a right $L^1(\G)$-submodule.
\end{proof}

The following ultimately provides an alternative description of
$\mc{P}^\infty(\G)$.

\begin{theorem}
Let $T:L^1(\G)\rightarrow L^\infty(\G)$ be a completely bounded
right $L^1(\G)$-module homomorphism.  Then there exists $x\in L^\infty(\G)$
with $T = R_x$.
\end{theorem}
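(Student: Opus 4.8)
The plan is to mimic the classical characterisation of (bi)module maps $L^1(\G) \to L^\infty(\G)$ as being implemented by left/right multiplication by an element of $L^\infty(\G)$, but now keeping careful track of complete boundedness via the duality $\mc{CB}(L^1(\G), L^\infty(\G)) \cong L^\infty(\G) \vnten L^\infty(\G)$ recalled in the operator-space preliminaries. So first I would use that duality to associate to the completely bounded map $T$ an element $y \in L^\infty(\G) \vnten L^\infty(\G)$, characterised by $\ip{T(\omega)}{\tau} = \ip{y}{\omega \otimes \tau}$ for all $\omega, \tau \in L^1(\G)$; equivalently $T(\omega) = (\omega \otimes \iota)(y)$, slicing on the first leg.

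Next I would translate the hypothesis that $T$ is a right $L^1(\G)$-module homomorphism into a coassociativity-type condition on $y$. Recall the right module action is $T(\omega) \star \tau = (\tau \otimes \iota)\Delta(T(\omega))$, and also $\omega \cdot \tau$ is the product in $L^1(\G)$ coming from $\Delta$. Writing out $T(\omega\tau) = T(\omega) \star \tau$ and evaluating against a further functional, the module identity should become $(\iota \otimes \Delta)(y) = y_{13}$ applied appropriately — more precisely, that $y$ satisfies $(\Delta \otimes \iota)$ on the correct legs in a way forcing the first leg of $y$ to be ``constant''. Concretely, I expect to show $(\Delta_\G \otimes \iota)(y) = (\iota \otimes \iota \otimes \iota)$-trivial in the first variable, i.e. $y$ lies in $(\mathbb{C} 1) \vnten L^\infty(\G)$ after suitable manipulation — but that is not quite right since $T$ need not be nondegenerate. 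The cleaner route: use the right-module property to show that the functional $\omega \mapsto T(\omega)$ factors appropriately, and then invoke the fixed-point/invariance characterisation, namely that $y \in L^\infty(\G) \vnten L^\infty(\G)$ with $(\iota \otimes \Delta)(y) = y_{12} \cdot (\text{something})$. I would then appeal to the standard fact (essentially from the theory of the multiplicative unitary $W$, via $W \in M(C_0(\G) \otimes C_0(\hat\G))$ and the slice maps) that such $y$ must be of the form $\Delta(x)$ for a unique $x \in L^\infty(\G)$; indeed $x = (\iota \otimes \hat\epsilon)$-type slice, or more robustly, $x$ is recovered as a weak-$*$ limit $x = \lim (\iota \otimes \omega_i)(y)$ along a bounded approximate identity $(\omega_i)$ for $L^1(\G)$, using that $L^1(\G)$ always has a bounded approximate identity (Kustermans–Vaes). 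With such a bai, $T(\omega_i) = (\omega_i \otimes \iota)(y)$ is bounded, has a weak-$*$ cluster point $x$, and the module identity $T(\omega \omega_i) = T(\omega) \star \omega_i \to T(\omega)$ plus $T(\omega\omega_i) \to \omega \star x$ forces $T(\omega) = \omega \star x = (\omega \otimes \iota)\Delta(x) = L_x(\omega)$, i.e. $T = R_x$.

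The main obstacle, I expect, is the very last identification: extracting $x$ and verifying $y = \Delta(x)$. Boundedness of $(T(\omega_i))$ is immediate from $\|T(\omega_i)\| \le \|T\|_{cb}\|\omega_i\|$, so a weak-$*$ cluster point $x$ exists; the delicate part is showing the cluster point is genuinely independent of subnet and that $y = \Delta(x)$ rather than merely that $T$ agrees with $R_x$ on a dense set (which is actually all we need). I would handle this by using that $\{(\omega \otimes \iota)(W) : \omega \in L^1(\G)\}$ is dense in $C_0(\G)$ together with the right-invariance of the setup, or alternatively just observe that proving $T(\omega) = R_x(\omega)$ for all $\omega$ directly (via the bai limit argument above) already gives $T = R_x$ and hence, a posteriori, $y = \Delta(x)$. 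Note complete boundedness is used only to guarantee $y \in L^\infty(\G)\vnten L^\infty(\G)$ exists (rather than living in some larger completion) and to get the norm bound on the $T(\omega_i)$; once we have a bounded $x$ with $T = R_x$, we are done, and the module-map hypothesis is exactly what powers the bai trick.
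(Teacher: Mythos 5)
Your first two steps are fine and match the paper: complete boundedness gives $y\in L^\infty(\G)\vnten L^\infty(\G)$ with $T(\omega)=(\omega\otimes\iota)(y)$, and the right-module property is precisely the identity $(\Delta\otimes\iota)(y)=(\iota\otimes\Delta)(y)$ — you circle around this without quite landing on it (the various guesses about $y_{13}$ and $y$ being ``trivial in the first leg'' are not needed), but it falls straight out of $\ip{y}{\omega_1\omega_2\otimes\omega_3}=\ip{y}{\omega_1\otimes\omega_2\omega_3}$. The genuine gap is in the final step: you assert that $L^1(\G)$ always has a bounded approximate identity, citing Kustermans--Vaes. This is false. $L^1(\G)$ has a bounded approximate identity if and only if $\G$ is coamenable (B\'edos--Tuset), and already in the cocommutative case $L^1(\hat G)=A(G)$ has one if and only if $G$ is amenable, by Leptin's theorem. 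So the bai trick, which is the engine of your argument, is unavailable in general — the paper itself flags exactly this point, remarking that \emph{if} $L^1(\G)$ has a bounded approximate identity then the result is ``a Banach algebraic exercise''.

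What is actually needed is a direct proof that any $y\in L^\infty(\G)\vnten L^\infty(\G)$ with $(\Delta\otimes\iota)(y)=(\iota\otimes\Delta)(y)$ lies in $\Delta(L^\infty(\G))$. This is a non-trivial ``fixed points of a coaction'' statement: for a left action $\alpha:N\rightarrow L^\infty(\G)\vnten N$ one has $\alpha(N)=\{z\in L^\infty(\G)\vnten N : (\iota\otimes\alpha)(z)=(\Delta\otimes\iota)(z)\}$, proved for Kac algebras by Enock--Schwartz and available for general locally compact quantum groups via Vaes's work on the unitary implementation of actions; the theorem follows by applying this to $\alpha=\Delta$ acting on $N=L^\infty(\G)$. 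Your fallback of slicing $y$ against a counit-type functional hits the same wall: a bounded counit on $C_0(\G)$ exists exactly when $\G$ is coamenable, so it cannot be used to extract $x$ in general.
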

\begin{proof}
That $T$ is completely bounded again means that there is $y\in L^\infty(\G)
\vnten L^\infty(\G)$ with $T(\omega) = (\omega\otimes\iota)(y)$ for all
$\omega\in L^1(\G)$.  If we have that $y=\Delta(x)$, then $T(\omega) =
(\omega\otimes\iota)\Delta(x) = \omega\star x = R_x(\omega)$ as required.
Conversely, that $T$ is a right $L^1(\G)$-module homomorphism is equivalent to
\[ \ip{y}{\omega_1\omega_2\otimes\omega_3}
= \ip{T(\omega_1\omega_2)}{\omega_3}
= \ip{T(\omega_1)}{\omega_2\omega_3}
= \ip{y}{\omega_1\otimes\omega_2\omega_3}, \]
that is, $(\Delta\otimes\iota)(y) = (\iota\otimes\Delta)(y)$.  So we wish
to prove that this relation on $y$ forces that $y=\Delta(x)$ for some $x$.

If $L^1(\G)$ has a bounded approximate identity, this is a Banach algebraic
exercise.  For $\G$ cocommutative, we gave a proof in \cite[Theorem~6.5]{daws},
and an early preprint by the author\footnote{See Theorem~2.2 of
\texttt{arXiv:1107.5244v3 [math.OA]}}
shows that this is true for general $\G$ (with a relatively elementary proof).
However, actually a somewhat more general statement already exists in
the literature.  A \emph{(left) action} of $\G$ on a von Neumann algebra $N$
is an injective normal unital $*$-homomorphism $\alpha:N\rightarrow
L^\infty(\G)\vnten N$ with $(\iota\otimes\alpha)\alpha = (\Delta\otimes\iota)
\alpha$.  For actions of Kac algebras, it was shown in
\cite[Th\'eor\`eme~IV.2]{es} (see also \cite[D\'efinition~II.8]{es}) that
\[ \alpha(N) = \{ z\in L^\infty(\G)\vnten N : (\iota\otimes\alpha)(z)
= (\Delta\otimes\iota)(z) \}. \]
In \cite[Section~2]{vaes} the necessary preliminary steps to prove
this result for locally compact quantum groups are given, although a full
proof is not shown.  As $\Delta$ is an action of $\G$ on $L^\infty(\G)$,
our claim immediately follows from this general theory.
\end{proof}

\begin{proposition}\label{prop:one}
Let $\G$ be a locally compact quantum group.  Then
$\mathbb{AP}(C_0(\G)) \subseteq \mathbb{P}^\infty(\G) \subseteq \CAP(L^1(\G))$.
Indeed, $x\in\mathbb{P}^\infty(\G)$ if and only if $L_x$ can be cb-norm
approximated by finite-rank module maps $L^1(\G)\rightarrow L^\infty(\G)$.
\end{proposition}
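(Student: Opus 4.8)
The plan is to prove the displayed ``indeed'' equivalence first; the inclusion $\mathbb{P}^\infty(\G)\subseteq\CAP(L^1(\G))$ then follows formally from it, while $\mathbb{AP}(C_0(\G))\subseteq\mathbb{P}^\infty(\G)$ is a direct computation with corepresentations. The one bookkeeping fact I would set up at the outset is that $x\mapsto L_x$ and $x\mapsto R_x$ are \emph{isometries} from $L^\infty(\G)$, with its norm, into $\mc{CB}(L^1(\G),L^\infty(\G))$, with the cb-norm: under the identification $\mc{CB}(L^1(\G),L^\infty(\G))\cong L^\infty(\G)\vnten L^\infty(\G)$ (and its right-slicing variant) both $L_x$ and $R_x$ correspond to $\Delta(x)$, so $\|L_x\|_{cb}=\|R_x\|_{cb}=\|\Delta(x)\|=\|x\|$, the last equality because $\Delta$ is an injective unital $*$-homomorphism, hence isometric. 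This lets one pass freely between norm convergence in $L^\infty(\G)$ and cb-norm convergence of orbit maps.

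For the equivalence, suppose first that $x\in\mathbb{P}^\infty(\G)$ and pick $x_n\in\mc{P}^\infty(\G)$ with $x_n\to x$ in norm. By the lemma above each $L_{x_n}$ is finite-rank, it is a completely bounded module map by coassociativity, and $\|L_{x_n}-L_x\|_{cb}=\|x_n-x\|\to 0$, so $L_x$ is a cb-norm limit of finite-rank module maps. Conversely, if $L_x=\lim_n T_n$ in cb-norm with each $T_n$ a finite-rank module map $L^1(\G)\to L^\infty(\G)$, then each $T_n$, being completely bounded, satisfies $T_n(\omega)=(\omega\otimes\iota)\Delta(y_n)=L_{y_n}(\omega)$ for a unique $y_n\in L^\infty(\G)$, by the theorem above; finite-rankness of $L_{y_n}=T_n$ forces $y_n\in\mc{P}^\infty(\G)$ by the lemma, and $\|y_n-x\|=\|L_{y_n}-L_x\|_{cb}\to 0$, so $x\in\overline{\mc{P}^\infty(\G)}=\mathbb{P}^\infty(\G)$.

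Granting the equivalence, $\mathbb{P}^\infty(\G)\subseteq\CAP(L^1(\G))$ is immediate: a finite-rank completely bounded map is completely compact (take $Y$ to be its image, so that $Q_Y$ annihilates it), and completely compact maps form a cb-norm-closed subspace since $\|Q_Y\|_{cb}\le 1$; hence $L_x$ is completely compact for $x\in\mathbb{P}^\infty(\G)$, and the same argument with $R$ in place of $L$ (using the description $\mc{P}^\infty(\G)=\{x:R_x\text{ is finite-rank}\}$ and the isometry $x\mapsto R_x$) shows $R_x$ is completely compact too. For the remaining inclusion it suffices, taking norm closures in $L^\infty(\G)$, to see that $\mc{AP}(C_0(\G))\subseteq\mc{P}^\infty(\G)$; by Remark~\ref{rem:uni_add_rep} we may restrict to matrix elements of admissible \emph{unitary} corepresentations $U=(U_{ij})$ of $C_0(\G)$, and for such $U$ the relation $\Delta(U_{ij})=\sum_k U_{ik}\otimes U_{kj}$ exhibits $\Delta(U_{ij})$ as a finite-rank tensor in $L^\infty(\G)\vnten L^\infty(\G)$, so $U_{ij}\in\mc{P}^\infty(\G)$ by definition.

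I expect the only genuinely delicate point to be the converse direction of the equivalence, where one must invoke the preceding theorem — and hence, implicitly, the fixed-point-algebra description for actions used in its proof — to know that the approximating maps $T_n$ really are orbit maps $L_{y_n}$, and then check that finite-rankness survives that identification to give $y_n\in\mc{P}^\infty(\G)$. Everything else is manipulation of the two isometries $x\mapsto L_x$, $x\mapsto R_x$ and of norm closures, together with the corepresentation identity above.
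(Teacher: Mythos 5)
Your proposal is correct and follows essentially the same route as the paper: matrix elements of (finite-dimensional) admissible corepresentations are visibly periodic, the isometry of $\Delta$ (equivalently, of $x\mapsto L_x$ into $\mc{CB}(L^1(\G),L^\infty(\G))$) converts norm closure of $\mc{P}^\infty(\G)$ into cb-norm approximation of $L_x$ by finite-rank module maps, and the converse invokes the preceding theorem to recognise each finite-rank module approximant as $L_{y_n}$ with $y_n\in\mc{P}^\infty(\G)$. The only difference is that you spell out explicitly the isometry bookkeeping that the paper leaves implicit.
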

\begin{proof}
It is clear that a matrix element of an admissible corepresentation is
periodic (as the corepresentation is finite-dimensional) and so
$\mathbb{AP}(C_0(\G)) \subseteq \mathbb{P}^\infty(\G)$.  If $x$ is periodic,
then as $\Delta$ is an isometry, it follows that $\Delta(x)$ can be norm
approximated by elements of form $\Delta(y)$ with $\Delta(y)$ a finite-rank
tensor.  It is immediate that $x\in \CAP(L^1(\G))$, and that $L_x$ can be
cb-norm approximated by finite-rank module maps.  The converse now follows
from the previous theorem, as every finite-rank module map is of the form
$L_y$ for some $y$ with $\Delta(y)$ a finite-rank tensor.
\end{proof}

Thus the collection $\mathbb{P}^\infty$, which can be defined purely in terms
of the Banach algebra $L^1(\G)$, is a weakening of $\mathbb{AP}$ and a
strengthening of $\CAP$.  In the next section we shall provide cases
(in particularly, when $\G$ is a Kac algebra) when
$\mathbb{AP} = \mathbb{P}^\infty$.  In the following section we embark on a
programme to determine $\CAP(A(G))$ for various classes of groups $G$.

\subsection{When periodic implies almost periodic}\label{sec:pap}

Our aim here is to show that if $x\in\mathbb{P}^\infty(\G)$, then under further
assumptions on $x$, also $x\in\mathbb{AP}(C_0(\G))$.  Firstly, we need to
decide upon reasonable ``further assumptions''.
Proposition~\ref{prop:admiss_iff_dsinv} immediately implies the following.

\begin{lemma}\label{lem:in_ap_in_ss}
Let $x\in\mc{AP}(\G)$.  If we consider $\mc{AP}(\G)$ as a subalgebra
of $M(C_0(\G))$, then $x\in D(S) \cap D(S^{-1})$.  Similarly, if we consider
$\mc{AP}(\G) \subseteq M(C_0^u(\G))$, then $\mc{AP}(\G) \subseteq
D(S_u) \cap D(S_u^{-1})$.
\end{lemma}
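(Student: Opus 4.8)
The statement is meant to follow quickly from Proposition~\ref{prop:admiss_iff_dsinv}, and the plan is simply to unwind the identification of $\mc{AP}(\G)$ with matrix elements of admissible corepresentations. First recall, from Definition~\ref{defn:curlyap_of_g} and the discussion preceding it, that when $\mc{AP}(\G)$ is realised inside $M(C_0(\G))$ it is precisely the linear span of the matrix elements $U_{ij}$ of admissible corepresentations $U$ of $C_0(\G)$; by Remark~\ref{rem:uni_add_rep} we may take these $U$ to be unitary, and by \cite[Proposition~2.12]{soltan} this span is a $*$-subalgebra of $M(C_0(\G))$.

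Next I would use the computation already carried out in the proof of Proposition~\ref{prop:admiss_iff_dsinv}: for any \emph{unitary} corepresentation $U=(U_{ij})$ of $C_0(\G)$ one has $U_{ij}\in D(S)$ with $S(U_{ij})=U_{ji}^*$ (this part uses only unitarity and the corepresentation identity, via \cite[Corollary~5.34, Remark~5.44]{kv}). Applying the linear map $S$ to a spanning set of $\mc{AP}(\G)$ then gives both $\mc{AP}(\G)\subseteq D(S)$ and
\[ S(\mc{AP}(\G)) = \lin\{ U_{ji}^* : U\text{ an admissible unitary corepresentation}\} = \mc{AP}(\G)^* = \mc{AP}(\G), \]
the last equality because $\mc{AP}(\G)$ is $*$-closed and $ij\mapsto ji$ is a bijection of index pairs. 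So $S$ restricts to a bijection of $\mc{AP}(\G)$ onto itself --- which is also exactly what the ``only if'' half of Proposition~\ref{prop:admiss_iff_dsinv} records --- and since $S^{-1}$ is defined on the range of $S$, we get $\mc{AP}(\G) = S(\mc{AP}(\G)) \subseteq D(S^{-1})$. This proves the first assertion.

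For the universal version I would repeat the argument verbatim with $C_0^u(\G)$ in place of $C_0(\G)$ and the universal antipode $S_u$ in place of $S$. By the remark immediately after Proposition~\ref{prop:admiss_iff_dsinv} (which invokes \cite[Proposition~9.6]{kus1}), that proposition and the identity $S_u(U_{ij})=U_{ji}^*$ for unitary corepresentations of $C_0^u(\G)$ both hold at the universal level; by Proposition~\ref{prop:same_red_uni} (together with Definition~\ref{defn:curlyap_of_g}) the space of matrix elements of admissible corepresentations of $C_0^u(\G)$ is again $\mc{AP}(\G)$, now sitting inside $M(C_0^u(\G))$. The same computation then gives $\mc{AP}(\G) = S_u(\mc{AP}(\G))$, hence $\mc{AP}(\G)\subseteq D(S_u)\cap D(S_u^{-1})$.

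There is no serious obstacle here; the one point deserving care is to obtain the conclusion ``$x\in D(S^{-1})$'' rather than only the weaker ``$x^*\in D(S)$''. This comes from the \emph{surjectivity} of $S$ on $\mc{AP}(\G)$, which in turn relies on $\mc{AP}(\G)$ being closed under $*$ (so that $\lin\{U_{ji}^*\}$ really is all of $\mc{AP}(\G)$) together with the symmetric shape $S(U_{ij})=U_{ji}^*$ of the matrix-element formula. Everything else is bookkeeping with the cited results.
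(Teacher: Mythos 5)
Your proof is correct and follows essentially the paper's (unstated) argument: the paper simply asserts that Proposition~\ref{prop:admiss_iff_dsinv} immediately implies the lemma, and your unwinding via matrix elements of admissible \emph{unitary} corepresentations (using Remark~\ref{rem:uni_add_rep} and the identity $S(U_{ij})=U_{ji}^*$) is exactly the intended route, in both the reduced and universal settings. One small remark: the detour through surjectivity of $S$ on $\mc{AP}(\G)$ is unnecessary, since $S^{-1}=*\circ S\circ *$ means that $x^*\in D(S)$ is \emph{equivalent} to (not weaker than) $x\in D(S^{-1})$, so the admissibility criterion $U_{ij}^*\in D(S)$ of Proposition~\ref{prop:admiss_iff_dsinv} already places each $U_{ij}$ in $D(S^{-1})$ directly.
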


Our aim will be to show that in fact $\mc{AP}(\G) = \mathbb{P}^\infty(\G)\cap
D(S) \cap D(S^{-1})$.  Notice that if $S$ is actually bounded (if $\G$ is
a Kac algebra) then immediately we have that $\mc{AP}(\G) = \mathbb{P}(\G)$.

We start with some results about the antipode.  As $S$ is unbounded, the
natural candidate for an involution on $L^1(\G)$ is unbounded.  Instead,
following for example \cite[Section~4]{kus1}, we define $L^1_\sharp(\G)$
to be the collection of $\omega\in L^1(\G)$ such that there is $\omega^\sharp
\in L^1(\G)$ satisfying $\ip{x}{\omega^\sharp} = \overline{\ip{S(x)^*}{\omega}}$
for all $x\in D(S)$.  Then $\omega\mapsto \omega^\sharp$ defines an
involution on $L^1_\sharp(\G)$.  For $\omega\in L^1(\G)$ define $\omega^*$
by $\ip{x}{\omega^*} = \overline{\ip{x^*}{\omega}}$ for $x\in L^\infty(\G)$.

\begin{lemma}\label{lem:slices_dom_s}
Let $x\in D(S)\subseteq L^\infty(\G)$ and let $\omega\in L^1_\sharp(\G)$.
Then $x \star \omega \in D(S)$ with $S(x\star\omega)
= \omega^{\sharp *} \star S(x)$, and $\omega\star x\in D(S)$ with
$S(\omega\star x) = S(x) \star \omega^{\sharp *}$.

Let $y\in D(S^{-1})$ and let $\tau\in L^1_\sharp(\G)^*$.  Then $y\star\tau,
\tau\star y\in D(S^{-1})$ with $S^{-1}(y\star\tau)=\tau^{*\sharp}
\star S^{-1}(y)$ and $S^{-1}(\tau\star y) = S^{-1}(y)\star\tau^{*\sharp}$.
\end{lemma}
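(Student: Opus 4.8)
\textbf{Proof plan for Lemma~\ref{lem:slices_dom_s}.}
The plan is to prove the first statement by a direct duality computation and then derive the second by applying it to $S^{-1}$ in place of $S$. Recall the defining relations: $\omega \in L^1_\sharp(\G)$ means $\ip{z}{\omega^\sharp} = \overline{\ip{S(z)^*}{\omega}}$ for all $z \in D(S)$, and $\ip{z}{\omega^*} = \overline{\ip{z^*}{\omega}}$ for all $z$. First I would unwind what $S(x\star\omega) = \omega^{\sharp*}\star S(x)$ should mean: for a generic test functional $\rho$ on $L^\infty(\G)$ we want $\ip{S(x\star\omega)}{\rho}$ to equal $\ip{\omega^{\sharp*}\star S(x)}{\rho} = \ip{S(x)}{\rho\,\omega^{\sharp*}}$ (using the convolution module action). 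The natural way to show $x\star\omega \in D(S)$ with a prescribed value is to use that $S$ is the closure of an operator and check the characterising identity via slices; concretely, one uses the basic fact (compare \cite[Corollary~5.34, Remark~5.44]{kv}, already invoked in the proof of Proposition~\ref{prop:admiss_iff_dsinv}) that $z \in D(S)$ with $S(z) = z'$ iff a suitable coproduct identity holds, or equivalently one works with the strong-left-invariance characterisation of $S$ used by Kustermans and Vaes.

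The cleanest route, which I would take, is to reduce to the case $x \in D(S)$ being a matrix element of (the regular corepresentation, i.e.) something for which $S$ acts by the known formula, and then use normality/closedness to pass to general $x \in D(S)$. More precisely: since $W \in M(C_0(\G)\otimes C_0(\hat\G))$ is the multiplicative unitary and $S$ satisfies $(S\otimes\iota)(W) = W^{-1} = W^*$ (on the appropriate dense domain), and since $x\star\omega = (\omega\otimes\iota)\Delta(x)$, one computes $\Delta(x\star\omega)$ and applies $(S\otimes\iota)$ formally, using that $S$ is an anti-homomorphism on its domain together with $\Delta S = (S\otimes S)\Delta^{\mathrm{op}}$ (i.e. $\sigma(S\otimes S)\Delta$). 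The antipode flips the order, so the left leg $\omega$ gets converted to act on the right of $S(x)$, and the $\sharp$ and $*$ operations are exactly what bookkeeping of the complex conjugations in the defining relations produces. I would write out $\ip{S(x\star\omega)}{\rho}$, substitute the definition of $\omega^\sharp$ and $\omega^{\sharp*}$, and match terms; the identity $S(x\star\omega) = \omega^{\sharp*}\star S(x)$ should fall out of the relation $\ip{S(x)^*}{\tau} = \overline{\ip{x}{\tau^\sharp}}$ applied with $\tau$ built from $\omega$ and $\rho$, after using that $S$ is $\sigma$-strictly closed to legitimise the manipulation for all $x \in D(S)$ rather than just a core. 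The second identity, $S(\omega\star x) = S(x)\star\omega^{\sharp*}$, is entirely parallel (slice on the other leg of $\Delta(x)$).

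For the $S^{-1}$ statements: note $S^{-1}$ is again a $\sigma$-strictly closed anti-automorphism (with polar decomposition $S^{-1} = \tau_{i/2}R = R\tau_{i/2}$), and $y \in D(S^{-1})$ iff $S(y') = y$ for some $y'$, i.e. $D(S^{-1}) = S(D(S))$. Given $\tau \in L^1_\sharp(\G)^*$ — here I read this as the involution $\tau \mapsto \tau^\sharp$ being defined on a subspace containing the relevant functionals, or one simply takes $\tau$ for which $\tau^{*\sharp}$ makes sense — one writes $y = S(x)$ with $x = S^{-1}(y) \in D(S)$, applies the first part of the lemma to get $S(x\star\omega) = \omega^{\sharp*}\star S(x)$, i.e. $S(S^{-1}(y)\star\omega) = \omega^{\sharp*}\star y$, and then rearranges to identify $S^{-1}(y\star\tau)$ by choosing $\omega$ appropriately in terms of $\tau$ so that $\omega^{\sharp*} = \tau$, equivalently $\omega = \tau^{*\sharp}$ (the involutions $\sharp$ and $*$ are each of order two up to the twists, and $(\omega^{\sharp*})$ inverts to $\omega = (\omega^{\sharp*})^{*\sharp}$, which is the combination appearing in the statement). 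Matching domains — i.e. checking $y\star\tau \in D(S^{-1})$ — comes for free because it equals $S(S^{-1}(y)\star\tau^{*\sharp})$ and $S^{-1}(y) \in D(S)$, $\tau^{*\sharp} \in L^1_\sharp(\G)$, so the first part applies.

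\textbf{Main obstacle.} The genuine difficulty is not the formal bookkeeping of the involutions but justifying the manipulations with the \emph{unbounded} antipode: $S$ is only $\sigma$-strictly closed, not everywhere defined, so one cannot naively apply $(S\otimes\iota)$ to a coproduct. The real work is to verify that $x\star\omega \in D(S)$ in the first place — the identity $\ip{x}{\omega^\sharp} = \overline{\ip{S(x)^*}{\omega}}$ must be leveraged through the characterisation of $D(S)$ (via strong left invariance, or via the description using $W$ and slices) so that one exhibits $x\star\omega$ as a limit, along a core for $S$, of elements on which $S$ is understood, with $S$ of those elements converging to $\omega^{\sharp*}\star S(x)$ in the $\sigma$-strict topology. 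Once membership in $D(S)$ is secured, computing the value is a short dual pairing argument. I expect this closedness/core argument to be where essentially all the technical care resides; the $S^{-1}$ half is then a formal corollary by the substitution $y = S(x)$.
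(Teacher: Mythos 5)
Your overall architecture is right --- prove the first part by pairing against $L^1_\sharp(\G)$, then deduce the $S^{-1}$ statements formally (the paper does this via $S^{-1}=*\circ S\circ *$ applied to $y^*$ and $\tau^*$; your substitution $y=S(x)$, $\omega=\tau^{*\sharp}$ is equivalent bookkeeping) --- but the step you defer, namely establishing $x\star\omega\in D(S)$ at all, is exactly where the gap sits, and the closedness/core approximation you gesture at is neither carried out nor needed. The missing idea is that the defining relation of $L^1_\sharp(\G)$ can be read as saying $S$ is the adjoint of the conjugate-linear, anti-multiplicative involution $\tau\mapsto\tau^\sharp$: concretely (this is \cite[Appendix~A]{bds}, which the paper simply invokes), to conclude $z\in D(S)$ with $S(z)=z'$ it \emph{suffices} to check $\ip{z}{\tau^\sharp}=\ip{z'}{\tau^*}$ for every $\tau\in L^1_\sharp(\G)$. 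With that criterion, domain membership and the value of $S$ are established simultaneously by the single chain
\[ \ip{x\star\omega}{\tau^\sharp}=\ip{x}{\omega\star\tau^\sharp}
=\ip{x}{(\tau\star\omega^\sharp)^\sharp}
=\overline{\ip{S(x)^*}{\tau\star\omega^\sharp}}
=\overline{\ip{\omega^\sharp\star S(x)^*}{\tau}}
=\ip{\omega^{\sharp*}\star S(x)}{\tau^*}, \]
which uses nothing beyond the anti-multiplicativity of $\sharp$ on $L^1_\sharp(\G)$; no limit along a core is required.

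Your proposed ``cleanest route'' --- applying $(S\otimes\iota)$ to $W$ and invoking $\Delta S=\sigma(S\otimes S)\Delta$, or reducing to matrix elements of the regular corepresentation and passing to general $x\in D(S)$ by closedness --- is the one to avoid: as you yourself note, those identities only hold on restricted domains, and making sense of $(S\otimes S)$ on $\Delta(D(S))$ for a general element of $D(S)$ is at least as hard as the lemma. As written, your proposal correctly locates the difficulty but does not resolve it; supplying the adjoint characterisation of $D(S)$ above closes the gap and collapses the whole first part to the displayed computation, after which your treatment of the $S^{-1}$ half goes through as a formal corollary.
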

\begin{proof}
By \cite[Appendix~A]{bds}, for example, to show that $x\star\omega\in D(S)$
with $S(x\star\omega) = \omega^{\sharp *} \star S(x)$, it suffices to show
that for all $\tau\in L^1_\sharp(\G)$, we have that
\[ \ip{x\star\omega}{\tau^\sharp}
= \ip{\omega^{\sharp *} \star S(x)}{\tau^*}. \]
However, this follows from a simple calculation:
\begin{align*} \ip{x\star\omega}{\tau^\sharp} &=
\ip{x}{\omega \star \tau^\sharp}
= \ip{x}{(\tau\star \omega^\sharp)^\sharp}
= \overline{ \ip{S(x)^*}{\tau\star \omega^\sharp} }
= \overline{ \ip{\omega^\sharp \star S(x)^*}{\tau} }
= \ip{\omega^{\sharp *} \star S(x)}{\tau^*}.
\end{align*}
The second claim is entirely analogous.

For the second part, we use that $S^{-1} = *\circ S\circ *$, and that
$\Delta$ is a $*$-homomorphism.  So $y^*\in D(S)$ and
$\tau^*\in L^1_\sharp(\G)$, and thus $y^*\star\tau^*\in D(S)$ with
$S(y^*\star\tau^*) = \tau^{*\sharp *} \star S(y^*)$.  Hence
$y\star\tau \in D(S^{-1})$ and
$S^{-1}(y\star\tau) = S(y^* \star \tau^*)^* =
\tau^{*\sharp}\star S^{-1}(y)$.  The other claim follows similarly.
\end{proof}

The following is essentially a restatement of \cite[Proposition~4.6]{bds};
but here our sums are finite, and so we can ignore convergence issues.

\begin{proposition}\label{prop:multsresult}
Let $x\in L^\infty(\G)$ be such that $\Delta(x) = \sum_{i=1}^n
a_i \otimes b_i$ where for each $i$, we have that $b_i^*\in D(S)$.
Then the map $L:L^\infty(\hat\G)\rightarrow \mc B(L^2(\G));
\hat x \mapsto \sum_{i=1}^n S(b_i^*)^* \hat x a_i$ maps into
$L^\infty(\hat\G)$ and is the adjoint of a (completely bounded) left
multiplier of $L^1(\hat\G)$.  In particular, $(\iota\otimes L)(W^*) =
(x\otimes 1)W^*$, equivalently, $\sum_i (1\otimes S(b_i^*)^*) \Delta(a_i)
= x\otimes 1$.
\end{proposition}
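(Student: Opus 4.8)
The plan is to reproduce \cite[Proposition~4.6]{bds}; the only simplification is that our $\Delta(x)=\sum_{i=1}^{n}a_{i}\otimes b_{i}$ is a \emph{finite} sum, so the absolute-convergence hypotheses present in that reference are vacuous. First I would make explicit why the two displayed formulae are equivalent, reducing everything to the single identity
\[
\sum_{i=1}^{n}(1\otimes S(b_{i}^{*})^{*})\,\Delta(a_{i})=x\otimes 1
\]
in $L^{\infty}(\G)\vnten L^{\infty}(\G)$: unwinding the definition of $L$ gives $(\iota\otimes L)(W^{*})=\sum_{i}(1\otimes S(b_{i}^{*})^{*})\,W^{*}(1\otimes a_{i})$, so multiplying on the right by $W$ and using $\Delta(a_{i})=W^{*}(1\otimes a_{i})W$ converts $(\iota\otimes L)(W^{*})=(x\otimes 1)W^{*}$ into the displayed identity (and conversely). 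Thus the proposition rests on this identity together with the structural claims that $L$ maps $L^{\infty}(\hat\G)$ into itself and is the adjoint of a completely bounded left multiplier of $L^{1}(\hat\G)$.

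The identity is, algebraically, nothing more than the antipode axioms. Writing $S^{-1}(b_{i})=S(b_{i}^{*})^{*}$ (legitimate since $b_{i}^{*}\in D(S)$ and $S^{-1}=*\circ S\circ*$), coassociativity $(\Delta\otimes\iota)\Delta(x)=(\iota\otimes\Delta)\Delta(x)$ reads $\sum_{i}\Delta(a_{i})\otimes b_{i}=\sum_{i}a_{i}\otimes\Delta(b_{i})$; applying to the last two tensor legs the contraction $p\otimes q\mapsto S^{-1}(q)\,p$ turns the left-hand side into $\sum_{i}(1\otimes S^{-1}(b_{i}))\Delta(a_{i})$ and, via the Hopf identity $\sum S^{-1}(c_{(2)})\,c_{(1)}=\varepsilon(c)\,1$, the right-hand side into $\sum_{i}\varepsilon(b_{i})\,a_{i}\otimes 1=\big((\iota\otimes\varepsilon)\Delta(x)\big)\otimes 1=x\otimes 1$. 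Of course a locally compact quantum group carries no bounded counit on $L^{\infty}(\G)$ and has an unbounded antipode, so this heuristic must be made precise through the Kustermans--Vaes apparatus -- slices of $W$, strong left invariance, and careful tracking of domains, the hypothesis $b_{i}^{*}\in D(S)$ being exactly what keeps $S(b_{i}^{*})$ in play (compare \cite[Corollary~5.34, Remark~5.44]{kv}, as used in Proposition~\ref{prop:admiss_iff_dsinv}). This is the substance of \cite[Proposition~4.6]{bds}, and I expect it to be the one genuinely delicate step.

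Granted the identity, the rest is soft. The map $L\colon \hat y\mapsto\sum_{i}S(b_{i}^{*})^{*}\,\hat y\,a_{i}$ is a finite sum of two-sided multiplications, hence automatically completely bounded and normal; normality gives $(\omega\otimes\iota)\big((\iota\otimes L)(W^{*})\big)=L\big((\omega\otimes\iota)(W^{*})\big)$ for $\omega\in L^{1}(\G)$, so from $(\iota\otimes L)(W^{*})=(x\otimes 1)W^{*}$ we obtain $L\big((\omega\otimes\iota)(W^{*})\big)=\big(\omega(x\,\cdot\,)\otimes\iota\big)(W^{*})\in L^{\infty}(\hat\G)$. Since $\{(\omega\otimes\iota)(W^{*}):\omega\in L^{1}(\G)\}$ is $\sigma$-weakly dense in $L^{\infty}(\hat\G)$ (being the image of the dense set $\{(\omega\otimes\iota)(W):\omega\in L^{1}(\G)\}$ under $z\mapsto z^{*}$, using $(\omega\otimes\iota)(W^{*})=\big((\omega^{*}\otimes\iota)(W)\big)^{*}$), $L$ is normal, and $L^{\infty}(\hat\G)$ is $\sigma$-weakly closed, it follows that $L(L^{\infty}(\hat\G))\subseteq L^{\infty}(\hat\G)$; being normal, $L$ is then the adjoint of a (completely) bounded map $L^{1}(\hat\G)\to L^{1}(\hat\G)$. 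That this map is a \emph{left} multiplier amounts to the intertwining relation $\hat\Delta\circ L=(L\otimes\iota)\circ\hat\Delta$, which I would verify on the same dense set using $(\iota\otimes\hat\Delta)(W^{*})=W_{12}^{*}W_{13}^{*}$ and the identity $(\iota\otimes L)(W^{*})=(x\otimes 1)W^{*}$; this is a short computation once the formula $\hat\Delta\big((\omega\otimes\iota)(W^{*})\big)=(\omega\otimes\iota\otimes\iota)(W_{12}^{*}W_{13}^{*})$ is in hand.
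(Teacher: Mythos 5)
Your proposal is correct and follows the same route as the paper, which offers no proof beyond the remark that the proposition is essentially a restatement of \cite[Proposition~4.6]{bds} with finite sums; like the paper, you place the analytic weight (making the Hopf-algebraic identity $\sum_i(1\otimes S(b_i^*)^*)\Delta(a_i)=x\otimes 1$ rigorous via strong left invariance) on that reference. The additional ``soft'' deductions you supply --- the equivalence of the two displayed formulae, normality and complete boundedness of $L$, the $\sigma$-weak density of $\{(\omega\otimes\iota)(W^*)\}$ in $L^\infty(\hat\G)$, and the verification of $\hat\Delta\circ L=(L\otimes\iota)\circ\hat\Delta$ on that dense set --- are all sound.
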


\begin{corollary}\label{corr:multsresult}
Let $x\in L^\infty(\G)$ be such that $\Delta(x) = \sum_{i=1}^n
a_i \otimes b_i$ where for each $i$, we have that $a_i\in D(S)$.
Then $1\otimes x = \sum_i (S(a_i)\otimes 1)\Delta(b_i)$.
\end{corollary}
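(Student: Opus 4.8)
The plan is to deduce Corollary~\ref{corr:multsresult} from Proposition~\ref{prop:multsresult} by a duality/flip argument. The statement to prove is that if $\Delta(x) = \sum_i a_i\otimes b_i$ with each $a_i\in D(S)$, then $1\otimes x = \sum_i (S(a_i)\otimes 1)\Delta(b_i)$. Proposition~\ref{prop:multsresult} gives, under the hypothesis that each $b_i^*\in D(S)$, the identity $\sum_i (1\otimes S(b_i^*)^*)\Delta(a_i) = x\otimes 1$. The symmetry I would exploit is that passing from $\G$ to its ``opposite'' or ``commutant'' quantum group swaps the roles of the two legs of $\Delta$: replacing $\Delta$ by $\Delta^{\op} = \sigma\circ\Delta$ (where $\sigma$ is the flip), the coproduct of $x$ becomes $\sum_i b_i\otimes a_i$, and the antipode of $\G^{\op}$ is related to that of $\G$ (for the opposite quantum group one has $S_{\op} = S^{-1}$, up to the appropriate conventions). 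So the hypothesis ``$a_i\in D(S)$'' should translate into a hypothesis about domains of $S_{\op}$ or $S_{\op}^{-1}$ to which Proposition~\ref{prop:multsresult} applies.

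Concretely, I would first note that the hypothesis $a_i\in D(S)$ is equivalent, via $S^{-1} = {*}\circ S\circ {*}$, to $a_i^*\in D(S^{-1})$, i.e.\ $a_i^*$ lies in the domain of $S^{-1}$; and one wants to manufacture from $\Delta(x)$ a new element whose coproduct has its \emph{second} leg in the domain of $S$. Apply the $*$-operation: since $\Delta$ is a $*$-homomorphism, $\Delta(x^*) = \sum_i a_i^*\otimes b_i^*$. Now I would apply Proposition~\ref{prop:multsresult}, but to the opposite quantum group $\G^{\op}$, whose coproduct is the flip of $\Delta$ and whose antipode is $S^{-1}$ (with suitable adjustments). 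Under $\Delta^{\op}$, $\Delta^{\op}(x^*) = \sum_i b_i^*\otimes a_i^*$, and the role of ``$b_i^*\in D(S)$'' in Proposition~\ref{prop:multsresult} is now played by the condition $a_i^{**} = a_i\in D(S^{-1})^{-1}$-type statement — here I would need to track the exact conventions so that the hypothesis ``$a_i\in D(S)$'' is precisely what Proposition~\ref{prop:multsresult} (for $\G^{\op}$) requires. That proposition then yields $\sum_i (1\otimes S^{-1}(a_i)^{\text{-ish}}) \Delta^{\op}(b_i^*) = x^*\otimes 1$, and after flipping back and applying $*$ again one obtains the claimed identity $1\otimes x = \sum_i (S(a_i)\otimes 1)\Delta(b_i)$.

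The cleanest route, which I would actually write up, is to reduce directly: set $y_i = b_i$ and observe we want to compute $\sum_i (S(a_i)\otimes 1)\Delta(b_i)$. Consider the element $x^*$ with $\Delta(x^*) = \sum_i a_i^*\otimes b_i^*$, so the \emph{first} leg $a_i^*$ satisfies $a_i^*\in D(S^{-1})$. Feeding this into the ``second leg in $D(S)$'' version via the flip map and the relation between $S$ on $\G$ and on $\G^{\op}$, Proposition~\ref{prop:multsresult} (or its mirror, Proposition~\ref{prop:multsresult} applied to the flipped coproduct, which is the content silently used) gives the mirrored identity. Then take adjoints of the whole operator identity in $L^\infty(\G)\vnten L^\infty(\G)$ (using that $\Delta$ is a $*$-map, that $(S(a_i))^* = S^{-1}(a_i^*)$, and that the flip is a $*$-map) to land exactly on $1\otimes x = \sum_i (S(a_i)\otimes 1)\Delta(b_i)$.

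The main obstacle is bookkeeping of conventions rather than any substantive new idea: I must be careful about (i) which antipode ($S$ versus $S^{-1}$) appears when passing to the opposite quantum group and how its domain behaves, (ii) the precise form of the ``mirror'' of Proposition~\ref{prop:multsresult} — since the cited Proposition~\ref{prop:multsresult} is stated with a built-in asymmetry (hypothesis on $b_i^*$, conclusion with $S(b_i^*)^*$ acting on the left), applying it to the flipped situation requires either invoking the analogous left-multiplier statement or re-deriving the flipped version, and I should check that \cite{bds} indeed contains (or trivially yields) this mirror — and (iii) that the sums being finite lets me ignore all convergence/closedness issues, so that taking adjoints term by term is legitimate. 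Modulo these conventions, the corollary is a one-line consequence of the proposition.
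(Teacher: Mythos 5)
Your proposal is correct and follows essentially the same route as the paper: pass to the opposite quantum group $\G^\op$ with $\Delta^\op=\sigma\Delta$, note that $S^\op=S^{-1}={*}\circ S\circ{*}$ so that $a_i\in D(S)$ is exactly the hypothesis $a_i^*\in D(S^\op)$ needed to apply Proposition~\ref{prop:multsresult} to $\G^\op$, and then flip back. Your worries about needing a ``mirror'' of the cited proposition or a detour through $x^*$ are unnecessary --- since $\G^\op$ is itself a locally compact quantum group, the proposition applies to it verbatim, and the identity $S^\op(a_i^*)^*=S(a_i)$ lands you directly on the stated conclusion after applying $\sigma$.
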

\begin{proof}
We follow \cite[Section~4]{kvvn}, and define the \emph{opposite quantum group}
to $\G$ to be $\G^\op$, where $L^\infty(\G^\op) = L^\infty(\G)$ and
$\Delta^\op = \sigma\Delta$.  Then we find that $R^\op = R$ and
$(\tau^\op_t) = (\tau_{-t})$.  It follows that $S^\op = R^\op \tau^\op_{-i/2}
= R \tau_{i/2} = S^{-1} = * \circ S \circ *$.

So $\Delta^\op(x) = \sum_i b_i \otimes a_i$ and for each $i$, we have that
$a_i^* \in D(S^\op)$.  So the previous proposition, now applied to $\G^\op$,
shows that $x\otimes 1 = \sum_i (1\otimes S^\op(a_i^*)^*) \Delta^\op(b_i)$,
or equivalently, $1\otimes x = \sum_i (S(a_i)\otimes 1)\Delta(b_i)$.
\end{proof}

We can now prove the main theorem of this section; the idea of this
construction is well-known in Hopf algebra theory (but of course we have
to work harder in the analytic setting).

\begin{theorem}\label{thm:pss_ap}
Let $x\in\mc{P}^\infty(\G) \cap D(S)\cap D(S^{-1})$.  There exists an
admissible corepresentation $U=(U_{ij})$ of $C_0(\G)$ such that $x$ is a matrix
element of $U$.  In particular, $x\in M(C_0(\G))$ and $x\in\mc{AP}(C_0(\G))$.
\end{theorem}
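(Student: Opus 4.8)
The plan is to build the corepresentation $U$ directly from the finite-rank tensor $\Delta(x)$, mimicking the classical Hopf-algebraic construction of the coalgebra generated by a grouplike-ish element, but paying attention to the analytic subtleties via the antipode results just proved. First I would write $\Delta(x) = \sum_{i=1}^n a_i \otimes b_i$ with the $b_i$ linearly independent (so $n$ minimal), and observe that, by coassociativity, $\sum_i \Delta(a_i) \otimes b_i = \sum_i a_i \otimes \Delta(b_i)$; by the linear independence of the $b_i$ this forces $\Delta(b_j) = \sum_i c_{ij} \otimes b_i$ for some $c_{ij} \in L^\infty(\G)$, i.e.\ the span of the $b_i$ is a "right coideal" and the $b_j$ form a (not necessarily unitary, not necessarily invertible) corepresentation-like object with matrix $C = (c_{ij})$. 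Applying $\iota\otimes\Delta$ again and using coassociativity of $\Delta$ on the $b_i$ shows $(\Delta\otimes\iota)(C) = C_{13}C_{23}$, so $C$ genuinely is a corepresentation of $C_0(\G)$ (in $\mathbb{M}_n(L^\infty(\G))$), and $x$ itself is a linear combination of the $b_i$, hence a matrix element of $C\oplus(\text{trivial})$ after adjusting. So the real content is to upgrade $C$ to an \emph{admissible} corepresentation, and then to invoke the compact-quantum-group machinery (via So{\l}tan) to conclude that $C$ is similar to a unitary and its matrix elements lie in $\mc{AP}(C_0(\G)) \subseteq M(C_0(\G))$.

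To get admissibility I would use $x\in D(S)\cap D(S^{-1})$ together with Lemma~\ref{lem:slices_dom_s}: slicing $\Delta(x)$ against functionals in $L^1_\sharp(\G)$ stays inside $D(S)$, which should let me arrange that (after passing to a suitable sub- or direct-sum corepresentation, or by choosing the $a_i$ and $b_i$ appropriately) all the $b_i$ lie in $D(S)$ and all $a_i\in D(S^\sharp)$-type domains. Then Proposition~\ref{prop:multsresult} applied with the roles of the legs as stated gives $\sum_i (1\otimes S(b_i^*)^*)\Delta(a_i) = x\otimes 1$, and Corollary~\ref{corr:multsresult}, applied to the expansion $\Delta(x)=\sum a_i\otimes b_i$ with $a_i\in D(S)$, gives $1\otimes x = \sum_i (S(a_i)\otimes 1)\Delta(b_i)$. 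Feeding the corepresentation identity $\Delta(b_j) = \sum_i c_{ij}\otimes b_i$ into the latter should produce an explicit matrix whose product with $C$ (or $C^\top$) is the identity --- exactly as in the computation at the end of Proposition~\ref{prop:admiss_iff_dsinv} where $VU^\top=1$ and $U^\top V=1$ were checked. This is where the assumption $x\in D(S^{-1})$ gets used: it lets $S^{-1}$ be applied to the $b_i$ (or to the conjugate matrix), producing the candidate inverse of $C^\top$.

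The step I expect to be the main obstacle is the bookkeeping needed to pass from "$x\in D(S)\cap D(S^{-1})$" to "\emph{every} matrix entry $b_i$ (and its adjoint) lies in $D(S)$ and $D(S^{-1})$". A priori the expansion $\Delta(x)=\sum a_i\otimes b_i$ only controls $x$, not the individual $b_i$; the fix is to slice: for $\omega\in L^1_\sharp(\G)$ one has $x\star\omega = (\omega\otimes\iota)\Delta(x) = \sum_i \ip{a_i}{\omega} b_i \in D(S)$ by Lemma~\ref{lem:slices_dom_s}, and since the $b_i$ are linearly independent and the functionals $\omega\mapsto\ip{a_i}{\omega}$ can be chosen (on a large enough subspace) to separate them, one extracts that each $b_i\in D(S)$ --- but one must check $L^1_\sharp(\G)$ is large enough, e.g.\ weak$^*$-dense or norm-dense in the relevant dual, and that $D(S)$ is closed under the needed limits, which it is since $S$ is strictly/$\sigma$-weakly closed. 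A parallel argument with $\G^\op$ (as in Corollary~\ref{corr:multsresult}) handles $D(S^{-1})$. Once admissibility of $C$ is established, I would finish by quoting \cite[Proposition~2.7, Corollary~2.9, Proposition~2.12]{soltan}: the C$^*$-algebra generated by the matrix elements of the admissible $C$ is a compact quantum group sitting inside $M(C_0(\G))$, its matrix elements lie in $\mc{AP}(C_0(\G))$, and therefore $x\in\mc{AP}(C_0(\G))\subseteq M(C_0(\G))$, as required.
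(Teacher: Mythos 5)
Your strategy is essentially the paper's: decompose $\Delta(x)$ minimally as a finite-rank tensor, use $L^1_\sharp(\G)$-slicing and Lemma~\ref{lem:slices_dom_s} to place all the legs and matrix entries in $D(S)$, use Proposition~\ref{prop:multsresult} and Corollary~\ref{corr:multsresult} to exhibit $x$ as a matrix element, and use $x\in D(S^{-1})$ (via $L^1_\sharp(\G)^*$, equivalently $\G^\op$) to handle the conjugate matrix. The bookkeeping you flag as the main obstacle is resolved exactly as you suggest; in the paper the basis elements are simply \emph{chosen} as slices $\omega_i\star x$ and $x\star\tau_i$ with $\omega_i,\tau_i\in L^1_\sharp(\G)$, which amounts to the same thing. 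Note also that the assertion in your first paragraph that ``$x$ is a linear combination of the $b_i$'' is not free here --- classically it uses the counit, which need not be bounded --- but it is indeed recovered from Corollary~\ref{corr:multsresult} once the $a_i$ are known to lie in $D(S)$, as you propose later.

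The one step that would fail as written is the invertibility of $C$ (and likewise of $\overline{C}$, which is what admissibility requires). The computation at the end of Proposition~\ref{prop:admiss_iff_dsinv} uses the identity $S(U_{jk})=U_{kj}^*$, which is a consequence of \emph{unitarity}; for your matrix $C\in\mathbb M_n(L^\infty(\G))$ there is no such formula, and the claim that the matrix with entries $S(c_{ij})$ inverts $C$ is precisely what must be proved --- it is the analytic substitute for the antipode axiom. The paper obtains it from \cite[Theorem~4.7]{bds}, which applies only after one verifies that the representation $L^1(\G)\rightarrow\mathbb M_n$, $\omega\mapsto(\ip{c_{ij}}{\omega})$, is non-degenerate; this is checked using the linear independence of the right legs together with the density of $\lin\{\omega\star\tau : \omega,\tau\in L^1(\G)\}$ in $L^1(\G)$. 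This invertibility is not cosmetic: it is what places $C$ in $M(C_0(\G))\otimes\mathbb M_n$ in the first place (invertible finite-dimensional corepresentations automatically have multiplier-algebra entries, \cite[Theorem~4.9]{bds}), and until that is known you cannot invoke So\l tan's Propositions 2.7 and 2.12, which are stated for corepresentations with entries in $M(A)$ rather than in $L^\infty(\G)$. Finally, for $\overline{C}$ the same theorem of \cite{bds} is applied after showing $c_{ij}^*\in D(S)$, which the paper does by building a second corepresentation from $L^1_\sharp(\G)^*$-slices and identifying it with $C$ via linear independence --- your ``parallel argument with $\G^\op$'' is the right idea, but this identification step is where the hypothesis $x\in D(S^{-1})$ actually enters.
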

\begin{proof}
That $x\in\mc{P}^\infty(\G)$ is equivalent to $R_x:L^1(\G)\rightarrow
L^\infty(\G); \omega\mapsto \omega\star x$ having a finite-dimensional
image, say $X$.  Then $R_x(L^1_\sharp(\G))$ is dense in $X$, as
$L^1_\sharp(\G)$ is dense in $L^1(\G)$.  As $X$ is finite-dimensional,
$R_x(L^1_\sharp(\G)) = X$, and so we can find $(\omega_i)_{i=1}^n \subseteq
L^1_\sharp(\G)$ with $\{ \omega_i \star x : 1\leq i\leq n\}$ forming a basis
for $X$.  Set $x_i = \omega_i\star x$.

As $\{x_i\}$ is a linearly independent set, the map $L^1(\G) \rightarrow
\mathbb C^n; \omega\mapsto (\ip{x_i}{\omega})_{i=1}^n$ is a linear surjection.
Hence the set $\{ (\ip{x_i}{\omega})_{i=1}^n : \omega\in L^1_\sharp(\G) \}$
is a dense linear subspace of $\mathbb C^n$, and hence equals $\mathbb C^n$.
So we can find $(\tau_i)_{i=1}^n \subseteq L^1_\sharp(\G)$ with
$\ip{x_j}{\tau_i} = \delta_{i,j}$ for $1\leq i,j\leq n$.  Let $y_i=
x \star \tau_i$.

For $\omega\in L^1(\G)$ there are unique $(a_i)_{i=1}^n\subseteq\mathbb C$
with $\omega\star x = \sum_i a_i x_i$.  Then $a_i = \ip{\omega\star x}{\tau_i}
= \ip{x\star\tau_i}{\omega} = \ip{y_i}{\omega}$, and so for $\tau\in L^1(\G)$,
\[ \ip{\Delta(x)}{\tau\otimes\omega} = \sum_i a_i \ip{x_i}{\tau}
= \sum_i \ip{x_i\otimes y_i}{\tau\otimes\omega}. \]
It follows that $\Delta(x) = \sum_i x_i \otimes y_i$.  Notice that then
$x_i = \omega_i \star x = \sum_j \ip{y_j}{\omega_i} x_j$, so as
$\{x_i\}$ is a linearly independent set, we conclude that
$\ip{y_j}{\omega_i} = \delta_{i,j}$ for all $i,j$.  In particular,
$\{ y_j \}$ is also a linearly independent set.

Set $U_{ij} = x_j \star \tau_i = \omega_j \star x \star \tau_i$.
Then
\[ \Delta^2(x) = \sum_j \Delta(x_j)\otimes y_j
= \sum_i x_i\otimes\Delta(y_i). \]
It follows that $\Delta(y_i) = \sum_j U_{ij} \otimes y_j$ for all $i$.
Then
\[ \Delta^2(y_i) = \sum_j \Delta(U_{ij}) \otimes y_j
= \sum_k U_{ik} \otimes \Delta(y_k)
= \sum_{k,j} U_{ik} \otimes U_{kj} \otimes y_j. \]
As $\{ y_j \}$ is also a linearly independent set, this shows that
$U=(U_{ij})$ is a corepresentation.

Using Lemma~\ref{lem:slices_dom_s}, notice that $y_i, x_i\in D(S)$
for all $i$, and that $U_{ij} \in D(S)$ for all $i,j$.  As $\Delta(x)=
\sum_i x_i\otimes y_i$, Corollary~\ref{corr:multsresult} shows that
\[ 1\otimes x = \sum_i (S(x_i)\otimes 1)\Delta(y_i)
= \sum_{i,j} S(x_i) U_{ij} \otimes y_j. \]
Hence $x \in \lin\{ y_i \}$.  We now notice that for each $i$,
$\Delta(y_i^*) = \sum_j U_{ij}^* \otimes y_j^*$, and so
Proposition~\ref{prop:multsresult} shows that
\[ y_i^*\otimes 1 = \sum_j (1\otimes S(y_j)^*)\Delta(U_{ij}^*)
= \sum_{jk} U_{ik}^* \otimes S(y_j)^*U_{kj}^*. \]
Hence $y_i \in \lin\{ U_{ik} : 1\leq k\leq n \}$ for each $i$, and
we conclude that $x$ is a matrix element of $U$.

The preceding argument is partly inspired by \cite[Section~4]{bds};
in particular, \cite[Theorem~4.7]{bds} shows that if the representation
$L^1(\G) \rightarrow \mathbb M_n; \omega \mapsto (\ip{U_{ij}}{\omega})$
is non-degenerate, then $U$ is invertible with inverse $(S(U_{ij}))_{i,j=1}^n$.
We claim that this representation is indeed non-degenerate.  For
$\xi\in\mathbb C^n$, as $\{y_i\}$ is a linearly independent set, there
is $\tau\in L^1(\G)$ with $\xi = ( \ip{y_j}{\tau} )_{j=1}^n$.  Then
\[ (\ip{U_{ij}}{\omega}) \xi
= \Big( \sum_j \ip{U_{ij}}{\omega} \ip{y_j}{\tau} \Big)_{i=1}^n
= \big( \ip{y_i}{\omega\star\tau} \big)_{i=1}^n. \]
We need to show that as $\omega,\tau$ vary, we get a subset whose linear
span is all of $\mathbb C^n$.  However, this is clear, because
$\lin\{ \omega\star \tau : \omega,\tau\in L^1(\G) \}$ is a dense subspace
of $L^1(\G)$, and the set $\{y_i\}$ is linearly independent.

We hence conclude that $U$ is invertible.  It is known that invertible
representations are members of $M(C_0(\G)\otimes\mathbb M_n)
= M(C_0(\G)) \otimes \mathbb M_n$, see \cite[Section~4]{woro},
\cite[Page~441]{bs2} and \cite[Theorem~4.9]{bds}.  So $x\in M(C_0(\G))$.

It remains to show that $U$ is admissible, that is, $U^\top$ is invertible,
or equivalently, that $\overline{U} = (U_{ij}^*)_{i,j=1}^n$ is invertible.
By hypothesis, also $x\in D(S^{-1})$.  Arguing as before, we can find
$(\omega_i')_{i=1}^n \subseteq L^1_\sharp(\G)^*$ with $x_i = \omega_i'\star x$.
Similarly, we find $(\tau_i')_{i=1}^n \subseteq L^1_\sharp(\G)^*$ with
$\ip{x_j}{\tau_i'} = \delta_{ij}$.  Setting $y_i' = x\star\tau_i'$, we follow
the argument above to conclude that $\Delta(x) = \sum_i x_i\otimes y_i'$.
As $\{ x_i\}$ is a linearly independent set, actually $y_i'=y_i$ for all $i$.
Then $U'_{ij} = x_j \star \tau_i'$ is a corepresentation, but now
$U'_{ij} \in D(S^{-1})$ for all $i,j$.  However,
\[ \sum_j U_{ij} \otimes y_j = \Delta(y_i) = \Delta(y_i')
= \sum_j U'_{ij} \otimes y_j' = \sum_j U'_{ij} \otimes y_j, \]
so using that $\{y_j\}$ is a linearly independent set, it follows that
$U_{ij} = U'_{ij}$ for all $i,j$.  Thus $U_{ij}^*\in D(S)$ for all $i,j$,
and so \cite[Theorem~4.7]{bds} shows that $\overline{U}$ is invertible
(as obviously $L^1(\G)\rightarrow\mathbb M_n;
\omega\mapsto (\ip{U_{ij}^*}{\omega})_{i,j=1}^n = 
(\overline{\ip{U_{ij}}{\omega^*}})_{i,j=1}^n$ is non-degenerate).
\end{proof}

\begin{remark}\label{rem:one}
An immediate corollary of the proof is that if $x\in\mc{P}^\infty(\G)
\cap D(S)$, then $x\in M(C_0(\G))$ and $x$ is a matrix element of an
invertible, finite-dimensional corepresentation of $C_0(\G)$.
\end{remark}

Combining Lemma~\ref{lem:in_ap_in_ss} and Theorem~\ref{thm:pss_ap}
immediately gives the following.

\begin{corollary}
We have that $\mc{AP}(C_0(\G)) = \mc{P}^\infty(\G) \cap D(S)
\cap D(S^{-1})$.
\end{corollary}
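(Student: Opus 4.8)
The plan is to observe that this corollary is nothing more than the conjunction of the two results that precede it, so the ``proof'' amounts to checking that each of the two inclusions is genuinely supplied, and that both statements refer to the same concrete copy of the algebra.

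For the inclusion $\mc{AP}(C_0(\G)) \subseteq \mc{P}^\infty(\G) \cap D(S) \cap D(S^{-1})$ I would first recall that $\mc{AP}(C_0(\G))$ is, by construction, spanned by the matrix elements of admissible corepresentations of $C_0(\G)$, viewed inside $M(C_0(\G))$; this is the identification fixed after Definition~\ref{defn:curlyap_of_g}. Each admissible corepresentation is finite-dimensional, and a matrix element $U_{ij}$ of such a $U$ satisfies $\Delta(U_{ij}) = \sum_k U_{ik} \otimes U_{kj}$, a finite-rank tensor in $L^\infty(\G) \vnten L^\infty(\G)$, so $\mc{AP}(C_0(\G)) \subseteq \mc{P}^\infty(\G)$ (this is exactly the first inclusion in Proposition~\ref{prop:one}, read at the algebraic level). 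The membership $\mc{AP}(\G) \subseteq D(S) \cap D(S^{-1})$, with respect to the same embedding into $M(C_0(\G))$, is precisely Lemma~\ref{lem:in_ap_in_ss}. Combining these gives the first inclusion.

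For the reverse inclusion there is nothing to do beyond quoting Theorem~\ref{thm:pss_ap}: any $x \in \mc{P}^\infty(\G) \cap D(S) \cap D(S^{-1})$ is shown there to be a matrix element of an admissible corepresentation of $C_0(\G)$, hence to lie in $\mc{AP}(C_0(\G))$. Consequently there is no real obstacle at the level of the corollary itself --- all the substance sits in Theorem~\ref{thm:pss_ap} --- and the only point meriting a sentence of care is bookkeeping: Lemma~\ref{lem:in_ap_in_ss} and Theorem~\ref{thm:pss_ap} both use the identification of $\mc{AP}(\G)$ with the matrix elements of admissible corepresentations inside $M(C_0(\G))$, so the domains $D(S)$, $D(S^{-1})$ appearing in the two statements are the same, and the two inclusions may be intersected without ambiguity.
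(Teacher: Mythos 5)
Your proposal is correct and follows exactly the paper's route: the paper's proof is simply the observation that Lemma~\ref{lem:in_ap_in_ss} (together with the evident fact that matrix elements of finite-dimensional corepresentations are periodic) gives the forward inclusion, while Theorem~\ref{thm:pss_ap} gives the reverse. Your extra remarks on the bookkeeping of the embedding into $M(C_0(\G))$ are sensible but add nothing beyond what the paper already implicitly relies on.
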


For Kac algebras, this takes on a very pleasing form, because $S=R$ is
bounded.  In particular, this answers a question ask by So\l tan,
see \cite[Page~1260]{soltan}, as to whether a finite-dimensional, unitary
corepresentation of a discrete Kac algebra is automatically admissible--
the answer is ``yes'', as would be true for any Kac algebra, and any
finite-dimensional corepresentation.

\begin{corollary}\label{corr:kacokay}
Let $\G$ be a Kac algebra.  Then $\mc{AP}(C_0(\G)) = \mc{P}^\infty(\G)$,
and $\mathbb{AP}(C_0(\G)) = \mathbb{P}^\infty(\G)$.
\end{corollary}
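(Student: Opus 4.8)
The plan is to deduce both statements from the Corollary immediately above, which identifies $\mc{AP}(C_0(\G)) = \mc{P}^\infty(\G) \cap D(S) \cap D(S^{-1})$, together with the one structural fact that characterises a Kac algebra: the scaling group $(\tau_t)$ is trivial, so the polar decomposition $S = R\tau_{-i/2}$ collapses to $S = R$, a bounded normal $*$-anti-automorphism of $L^\infty(\G)$. In particular both $S$ and $S^{-1} = R$ are everywhere defined, i.e. $D(S) = D(S^{-1}) = L^\infty(\G)$. This is the only input beyond what has already been proved.

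Granted this, the first equality is immediate: intersecting $\mc{P}^\infty(\G)$ with $D(S) \cap D(S^{-1}) = L^\infty(\G)$ changes nothing, so $\mc{AP}(C_0(\G)) = \mc{P}^\infty(\G)$. One point worth noting explicitly is that $\mc{AP}(C_0(\G))$ was introduced as a subalgebra of $M(C_0(\G))$ while $\mc{P}^\infty(\G)$ lives a priori in $L^\infty(\G)$; but Theorem~\ref{thm:pss_ap} already shows that every $x \in \mc{P}^\infty(\G) \cap D(S) \cap D(S^{-1})$ in fact lies in $M(C_0(\G))$ (and is a matrix element of an admissible corepresentation), so in the Kac case $\mc{P}^\infty(\G) \subseteq M(C_0(\G)) \subseteq L^\infty(\G)$ and there is no ambiguity about the ambient algebra.

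For the second equality I would simply pass to norm closures: by definition $\mathbb{P}^\infty(\G)$ is the norm closure of $\mc{P}^\infty(\G)$ in $L^\infty(\G)$ and $\mathbb{AP}(C_0(\G))$ is the norm closure of $\mc{AP}(C_0(\G))$ in the same C$^*$-norm, so since the two dense $*$-subalgebras coincide, so do their closures. I do not expect any real obstacle here — the content is precisely that the antipode hypotheses appearing in Theorem~\ref{thm:pss_ap}, Lemma~\ref{lem:in_ap_in_ss} and the preceding Corollary become vacuous for a Kac algebra — so the only step that deserves a sentence is the reduction ``$S = R$ bounded $\implies D(S) = D(S^{-1}) = L^\infty(\G)$'', which is standard.
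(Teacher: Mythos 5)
Your proof is correct and is essentially the paper's own argument: the paper deduces the corollary from the identity $\mc{AP}(C_0(\G)) = \mc{P}^\infty(\G) \cap D(S) \cap D(S^{-1})$ by observing that for a Kac algebra $S=R$ is bounded, so the domain conditions are vacuous, and then passes to norm closures. Your explicit remarks about the ambient algebra ($M(C_0(\G))$ versus $L^\infty(\G)$) and about the closures being taken in the same C$^*$-norm are correct and merely make explicit what the paper leaves implicit.
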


So at least for a Kac algebra, this corollary provides a way, just starting
from $L^1(\G)$, of finding $\mc{AP}(C_0(\G))$ and $\mathbb{AP}(C_0(\G))$.
In the general case, $D(S)\cap D(S^{-1})$ will be smaller than $L^\infty(\G)$,
and so in principle it might be hard to calculate $\mc{P}^\infty(\G)
\cap D(S)\cap D(S^{-1})$.  Compare with Theorem~\ref{thm:ss_okay_cmpt}
and Conjecture~\ref{conj:one} below.

\section{The cocommutative case}\label{sec:cocomcase}

So\l tan showed in \cite[Section~4.2]{soltan} that if $G$ is a locally
compact group and $\G = \hat G$, then $\mathbb{AP}(C_0^u(\G)) =
\mathbb{AP}(C^*(G))$ is the closed linear span of the translation operators
$\{ L_s : s\in G \}$ inside $M(C^*(G))$.  Actually, much the same proof work
for $C^*_r(G) = C_0(\G)$.  So in light of Proposition~\ref{prop:same_red_uni},
we see that if $G_d$ denotes $G$ with the discrete topology, then
the compactification of $\G$ in ${\sf LCQG}$ is $\widehat{G_d}$

Let us give a different, short proof of this, using our previous results.
In the literature (see \cite{chou} for example) it is common to write
$C^*_\delta(G)$ for the C$^*$-subalgebra of $VN(G)=L^\infty(\G)$ generated
by the left-translation maps $\{ \lambda(s) : s\in G \}$.  Let us write
$\mathbb C[G]$ for the algebra (not norm closed) generated by the operators
$\lambda(s)$.  We first note that each $\lambda(s)\in M(C_0(\G))$, and that
$\Delta$ restricts to $\mathbb C[G]$ turning $(\mathbb C[G], \Delta)$ into a
unital bialgebra.  Thus $(C^*_\delta(G),\Delta)$ is a unital C$^*$-bialgebra.
It is easy to verify the density conditions to show that
$(C^*_\delta(G),\Delta)$ is a compact quantum group.
Hence $C^*_\delta(G) \subseteq \mathbb{AP}(C_0(\G))$.
However, Chou showed in \cite[Proposition~2.3]{chou} (translated to our
terminology) that $\mc P^\infty(\G) = \mathbb C[G]$, from which it follows
that $\mathbb P^\infty(\G) = C^*_\delta(G)$.  So, with reference to
Proposition~\ref{prop:one}, it follows that $\mathbb{AP}(C_0(\G)) =
C^*_\delta(G)$.

\subsection{Completely almost periodic elements}

We now wish to investigate Runde's definition $\CAP(A(G))$.  Firstly,
we have the following result, which holds for any compact Kac algebra.

\begin{theorem}\label{thm:cap_cmpt_kac_case}
Let $\G$ be a compact Kac algebra.  If $x\in L^\infty(\G)$ with
$\Delta(x) \in L^\infty(\G) \otimes L^\infty(\G)$ (the C$^*$-algebraic
minimal tensor product) then $x\in C(\G)$.
\end{theorem}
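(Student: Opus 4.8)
The plan is to exploit the structure of compact Kac algebras, where the Haar state $h$ is tracial and faithful, the antipode $S=R$ is bounded, and $L^\infty(\G)$ decomposes (via Peter--Weyl theory) as the $\sigma$-weak closure of the direct sum of the matrix blocks $\mathbb{M}_{n_\alpha}$ coming from the irreducible unitary corepresentations $\{U^\alpha\}$. First I would recall that $C(\G)$ is exactly the norm closure of the algebraic direct sum $\mathcal{O}(\G) = \bigoplus_\alpha \mathbb{M}_{n_\alpha}$ of those blocks inside $L^\infty(\G)$, i.e.\ the set of $x$ whose ``Fourier coefficients'' decay in the C$^*$-norm. So the task is: given $x\in L^\infty(\G)$ with $\Delta(x)\in L^\infty(\G)\otimes_{\min}L^\infty(\G)$, show $x$ lies in this norm-closed span.

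The key step is to use Proposition~\ref{prop:one} and Corollary~\ref{corr:kacokay}: since $\G$ is a Kac algebra, $S$ is bounded, so $D(S)\cap D(S^{-1})=L^\infty(\G)$ and hence $\mathbb{AP}(C_0(\G))=\mathbb{P}^\infty(\G)$, the norm closure of the elements $x$ with $\Delta(x)$ a \emph{finite-rank} tensor in $L^\infty(\G)\vnten L^\infty(\G)$. So I would show that the hypothesis ``$\Delta(x)$ lies in the minimal C$^*$-tensor product'' forces $x\in\mathbb{P}^\infty(\G)$; combined with $\mathbb{AP}(C_0(\G))=\mathbb{P}^\infty(\G)\subseteq M(C_0(\G))=C(\G)$ (compactness makes $C_0(\G)$ unital), this finishes. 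To get $x\in\mathbb{P}^\infty(\G)$, I would argue that $\Delta(x)\in L^\infty(\G)\otimes_{\min}L^\infty(\G)$ means $\Delta(x)$ is a C$^*$-norm limit of elements of $L^\infty(\G)\odot L^\infty(\G)$, hence a $\vnten$-norm limit of finite-rank tensors; since $L_x(\omega)=(\omega\otimes\iota)\Delta(x)$, this says $L_x:L^1(\G)\to L^\infty(\G)$ is a norm limit of finite-rank maps, and by the theorem preceding Proposition~\ref{prop:one} each finite-rank completely bounded \emph{module} map is of the form $L_y$; the subtlety is that a generic finite-rank approximant of $\Delta(x)$ need not satisfy the coassociativity relation $(\Delta\otimes\iota)=(\iota\otimes\Delta)$ that pins it down as $\Delta(y)$.

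This is the main obstacle, and the way I would get around it is by an averaging/projection argument using the invariant weights: the ``module map'' condition is a closed linear constraint on $y\in L^\infty(\G)\vnten L^\infty(\G)$, and one can project an arbitrary approximant onto the subspace $\{(\Delta\otimes\iota)(z)=(\iota\otimes\Delta)(z)\}=\Delta(L^\infty(\G))$ by slicing against a suitable element of $L^1(\G)$ built from the Haar state (this is precisely the kind of averaging that produces conditional expectations onto the fixed-point algebra of a coaction, cf.\ the proof of the theorem before Proposition~\ref{prop:one}). Concretely, if $\varphi\in L^1(\G)$ is the Haar state, then $(\iota\otimes\varphi)\Delta = 1\cdot\varphi(\cdot)$, and slicing a finite-rank approximant $y\approx\Delta(x)$ on appropriate legs against $\varphi$ returns an element of $\Delta(L^\infty(\G))$ which still approximates $\Delta(x)$ in norm while now genuinely being of the form $\Delta(x')$ with $L_{x'}$ finite-rank; passing to the limit gives $x\in\mathbb{P}^\infty(\G)$. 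Alternatively --- and perhaps more cleanly --- I would use the Peter--Weyl decomposition directly: write $x=\sum_\alpha x_\alpha$ with $x_\alpha$ in the $\alpha$-block (convergence $\sigma$-weakly), note $\Delta(x_\alpha)$ lives in the algebraic tensor product $\mathbb{M}_{n_\alpha}\odot\mathbb{M}_{n_\alpha}$, and show that $\Delta(x)\in L^\infty(\G)\otimes_{\min}L^\infty(\G)$ forces $\|x_\alpha\|\to 0$; then $x=\lim_{F}\sum_{\alpha\in F}x_\alpha$ in C$^*$-norm exhibits $x\in C(\G)$. The estimate $\|x_\alpha\|\to0$ would come from applying a slice of a minimal-tensor-product approximant and using that the block projections are compatible with $\Delta$, so that the $\alpha$-component of $\Delta(x)$ is controlled by $\|x_\alpha\|$ uniformly; I expect this second route to be the most transparent, with the only real work being the norm estimate relating the block components of $x$ to those of $\Delta(x)$.
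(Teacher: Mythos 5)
Your instinct to average against the Haar trace is the right one, but both of the concrete routes you sketch have a gap at the decisive step. For the first route: you never actually produce the projection onto $\Delta(L^\infty(\G))$ with the property you need. Slicing with the Haar state as you suggest, $(\iota\otimes\varphi)\Delta$, collapses everything to scalar multiples of $1$ --- it is not a projection onto $\Delta(L^\infty(\G))$. The natural norm-one idempotent with range $\Delta(L^\infty(\G))$ is $\tau\mapsto\Delta\bigl((\varphi\otimes\iota)(W\tau W^*)\bigr)$ (it fixes $\Delta(x)$ because $W\Delta(x)W^*=1\otimes x$), but this map sends an elementary tensor $y\otimes z$ to $\Delta(c)$ for a \emph{general} element $c\in C(\G)$, not to some $\Delta(x')$ with $\Delta(x')$ finite rank. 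So this route does not deliver $x\in\mathbb{P}^\infty(\G)$; and note that the intermediate claim ``$\Delta(x)\in L^\infty(\G)\otimes L^\infty(\G)$ implies $x\in\mathbb{P}^\infty(\G)$'' is, via Corollary~\ref{corr:kacokay}, equivalent to the theorem itself, so nothing is gained by aiming at it. The second route fails at its last sentence: knowing $\|x_\alpha\|\to0$ does \emph{not} give norm convergence of the partial sums $\sum_{\alpha\in F}x_\alpha$. Already for $\G=\mathbb T$ there are $f\in L^\infty(\mathbb T)\setminus C(\mathbb T)$ with $\hat f(n)\to0$, and even for $f\in C(\mathbb T)$ the Fourier partial sums need not converge uniformly; one would need a Fej\'er-type summation method, which is not supplied.

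The paper's proof is exactly the ``averaging'' map above, used without composing with $\Delta$: set $\beta(\tau)=(\varphi\otimes\iota)(W\tau W^*)$, a contraction on $L^\infty(\G)\vnten L^\infty(\G)$ with $\beta(\Delta(x))=x$. The whole content is the computation --- using that $\varphi$ is a trace and $\varphi\circ R=\varphi$, which is where the Kac hypothesis enters --- showing that
\[ \beta(y\otimes z)=(\iota\otimes\hat\omega)(W^*)\in C(\G), \qquad
\hat\omega=\omega_{y\xi_0,\,R(z^*)\xi_0}\in L^1(\hat\G), \]
for all $y,z\in L^\infty(\G)$. Given that, one approximates $\Delta(x)$ in norm by $\tau\in L^\infty(\G)\odot L^\infty(\G)$ and applies $\beta$ to conclude $x\in C(\G)$. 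If you want to repair your argument, this identification of where the elementary tensors land under $\beta$ is the missing ingredient; without it neither of your two routes closes.
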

\begin{proof}
As $\G$ is Kac, we have that $R=S=S^{-1}$ and the Haar state $\varphi$ is
a trace.  Let $\xi_0\in L^2(\G)$ be the cyclic vector for $\varphi$.
Then, for $y,z\in L^\infty(\G)$ and $a,b\in C(\G)$,
\begin{align*} & \big( (\varphi\otimes\iota)(W(y\otimes z)W^*) a \xi_0
  \big| b \xi_0 \big)
= \big( (y\otimes z) W^*(\xi_0\otimes a\xi_0) \big|
  W^*(\xi_0\otimes b\xi_0) \big) \\
&= \big( \Delta(b^*)(y\otimes z) \Delta(a) \xi_0\otimes \xi_0 \big|
  \xi_0\otimes\xi_0 \big)
= (\varphi\otimes\varphi)\big( \Delta(b^*)(y\otimes z) \Delta(a) \big) \\
&= (\varphi\otimes\varphi)\big( \Delta(ab^*)(y\otimes z) \big).
\end{align*}
using that $\varphi$ is a trace.  By \cite[Theorem~2.6(4)]{woro2} we 
know that
\[ R\big( (\varphi\otimes\iota)\big( (ab^*\otimes 1)\Delta(y) \big) \big)
= (\varphi\otimes\iota)\big( \Delta(ab^*)(y\otimes 1) \big). \]
Hence we get
\begin{align*}
\varphi\big( R\big( (\varphi\otimes\iota)\big( (ab^*\otimes 1)\Delta(y)
\big) \big)   z \big)
= \varphi\big( R(z) \big( (\varphi\otimes\iota)\big( (ab^*\otimes 1)\Delta(y)
\big) \big) \big),
\end{align*}
using that $\varphi \circ R = \varphi$.  This is then equal to
\begin{align*}
& (\varphi\otimes\varphi)\big( (ab^*\otimes R(z))\Delta(y) \big)
= \big( W^*(\xi_0\otimes y\xi_0) \big| ba^*\xi_0 \otimes R(z^*)\xi_0 \big) \\
&= \big( (\iota\otimes\hat\omega)(W^*) \xi_0 \big| ba^*\xi_0 \big)
= \varphi\big( ab^* c \big) = \varphi\big(b^*ca\big)
= \big( c a\xi_0 \big| b\xi_0 \big),
\end{align*}
where $\hat\omega = \omega_{y\xi_0,R(z^*)\xi_0} \in L^1(\hat\G)$ and
$c = (\iota\otimes\hat\omega)(W^*)\in C(\G)$.  As $a,b\in C(\G)$ were
arbitrary, this shows that
\[ (\varphi\otimes\iota)(W(y\otimes z)W^*) \in C(\G). \]
We remark that a similar idea to this construction, in a very different
context, appears in \cite[Section~3]{rx}.

For $\epsilon>0$ we can find $\tau \in L^\infty(\G)\odot L^\infty(\G)$
with $\|\Delta(x)-\tau\|<\epsilon$.  Thus
\[ \| (\varphi\otimes\iota)(W\Delta(x)W^*) -
(\varphi\otimes\iota)(W\tau W^*) \| < \epsilon. \]
However, as $W^*\Delta(x)W=1\otimes x$, this shows that there is some
$d\in C(\G)$ with $\|x-d\|<\epsilon$.  So $x\in C(\G)$ as required.
\end{proof}

\begin{corollary}
Let $\G$ be a compact Kac algebra with $L^\infty(\G)$ injective
(for example, $\G=\hat G$ for a discrete amenable group $G$).  Then
$\CAP(L^1(\G)) = C(\G)$.
\end{corollary}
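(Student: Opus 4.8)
The plan is to read off both inclusions from results already established. Since $L^\infty(\G)$ is injective, the discussion preceding Section~\ref{sec:e2} (that is, \cite[Theorem~2.4]{runde}) identifies
\[ \CAP(L^1(\G)) = \{ x\in L^\infty(\G) : \Delta(x) \in L^\infty(\G)\otimes L^\infty(\G) \}, \]
where $\otimes$ denotes the minimal C$^*$-tensor product, regarded as a subalgebra of $L^\infty(\G)\vnten L^\infty(\G)$. As $\G$ is a compact Kac algebra, Theorem~\ref{thm:cap_cmpt_kac_case} applies to every $x$ in this set and shows $x\in C(\G)$; hence $\CAP(L^1(\G)) \subseteq C(\G)$.

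For the reverse inclusion I would argue as follows. Because $\G$ is compact, $C(\G)$ is unital and $\Delta$ is an honest $*$-homomorphism $C(\G)\to C(\G)\otimes C(\G)$ into the minimal tensor product, with no multiplier algebra intervening. Functoriality of the minimal tensor product for the inclusion $C(\G)\hookrightarrow L^\infty(\G)$ gives $C(\G)\otimes C(\G)\subseteq L^\infty(\G)\otimes L^\infty(\G)$, so $\Delta(x)\in L^\infty(\G)\otimes L^\infty(\G)$ for every $x\in C(\G)$, and the displayed description places $x$ in $\CAP(L^1(\G))$. Alternatively, one can sidestep the tensor-product bookkeeping: a compact quantum group is its own Bohr compactification, so $\mathbb{AP}(C_0(\G)) = C(\G)$, whence Proposition~\ref{prop:one} gives $C(\G) = \mathbb{AP}(C_0(\G)) \subseteq \mathbb{P}^\infty(\G) \subseteq \CAP(L^1(\G))$. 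Combining the two inclusions yields $\CAP(L^1(\G)) = C(\G)$.

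For the parenthetical example, when $G$ is discrete the quantum group $\hat G$ is compact and, being the dual of a classical group, is a Kac algebra; and $L^\infty(\hat G) = VN(G)$ is injective precisely when $G$ is amenable, so the hypotheses hold. I do not expect any real obstacle here: all of the analytic content sits in Theorem~\ref{thm:cap_cmpt_kac_case} (where the trace property of the Haar state and the identity $R=S$ are used), and in this corollary the only point worth a word is the elementary functoriality of the minimal tensor product needed to see $\Delta(C(\G))\subseteq L^\infty(\G)\otimes L^\infty(\G)$.
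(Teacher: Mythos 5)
Your argument is correct and is essentially the proof the paper intends: the corollary is an immediate consequence of Theorem~\ref{thm:cap_cmpt_kac_case} combined with Runde's identification $\CAP(L^1(\G)) = \{x : \Delta(x)\in L^\infty(\G)\otimes L^\infty(\G)\}$ under injectivity, with the reverse inclusion being the easy observation that $\Delta(C(\G))\subseteq C(\G)\otimes C(\G)\subseteq L^\infty(\G)\otimes L^\infty(\G)$ by injectivity of the minimal tensor norm. Your alternative route via Proposition~\ref{prop:one} is also valid and equally quick.
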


To deal with the non-injective case would presumably require a much more
detailed study of the notion of a completely compact map.  We do not currently
see how to adapt our ideas to the non-Kac setting.

In the rest of this section, we start a programme of extending
Theorem~\ref{thm:cap_cmpt_kac_case}.  Having a trace seemed very important to
this proof, so to make progress in the non-compact case we shall restrict
ourselves to the case when $\G=\hat G$ for a [SIN] group $G$.  We shall prove
the following result:

\begin{theorem}\label{thm:cap_partial_case}
Let $G$ be a [SIN] group, and let $x_0\in VN(G)$ be such that
$\Delta^2(x_0) \in VN(G)\otimes VN(G)\otimes VN(G)$.
Then $x_0\in C^*_\delta(G)$.
\end{theorem}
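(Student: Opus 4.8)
The plan is to carry out the argument of Theorem~\ref{thm:cap_cmpt_kac_case} inside the cocommutative Kac algebra $\G=\hat G$, where compactness is lost: in that proof compactness was used only to have the Haar \emph{state} $\varphi$ (so $\varphi(1)=1$) together with a cyclic vector, and the [SIN] hypothesis supplies replacements for both.

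First I would assemble the tools. A [SIN] group is unimodular, so the Plancherel weight $\varphi$ on $VN(G)=L^\infty(\G)$ is a faithful normal semifinite \emph{trace}, with $\varphi\circ R=\varphi$; and $\G=\hat G$ is cocommutative, hence a Kac algebra, so $S=R$, the scaling group is trivial, and the strong-invariance identities for $\varphi$ used in Theorem~\ref{thm:cap_cmpt_kac_case} (the analogues of \cite[Theorem~2.6(4)]{woro2}) remain valid, now as identities among unbounded objects. The [SIN] property also gives a bounded approximate identity $(e_i)$ for $L^1(G)$ consisting of positive, symmetric, \emph{central} functions supported in a shrinking base of compact invariant neighbourhoods of $e$. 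Set $z_i=\lambda(e_i)$; these are positive contractions in the centre $Z(VN(G))$ which lie in the GNS-domain of $\varphi$, with $\varphi(z_i^2)<\infty$, and which converge $\sigma$-strongly to $1$; crucially, the normalised functionals $\tfrac{1}{\varphi(z_i)}\varphi(z_i\,\cdot\,)\in A(G)$ converge, on $C^*_\delta(G)$, to the canonical trace $\lambda(s)\mapsto\delta_{s,e}$, which is precisely the Haar state of the compact quantum group $(C^*_\delta(G),\Delta)$. They thus play the role of ``approximate Haar states'' for the compactification.

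Next I would feed in the hypothesis. Writing $\Delta^2(x_0)=(\Delta\otimes\iota)\Delta(x_0)=(\iota\otimes\Delta)\Delta(x_0)$ and fixing the multiplicative unitary $W$ of $\hat G$, so that $\Delta(y)=W^*(1\otimes y)W$ and hence (up to leg conventions) $\Delta^2(x_0)=W_{12}^*W_{23}^*\,(1\otimes1\otimes x_0)\,W_{23}W_{12}$, one sees that $x_0$ is recovered from $\Delta^2(x_0)$ by conjugating back by $W_{23}W_{12}$ and then applying the norm-continuous renormalised \emph{double} slice $\tfrac{1}{\varphi(z_i)^2}\big(\varphi(z_i\,\cdot\,)\otimes\varphi(z_i\,\cdot\,)\otimes\iota\big)$. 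Since by hypothesis $\Delta^2(x_0)$ lies in the minimal tensor product $VN(G)\otimes VN(G)\otimes VN(G)$, it is a norm-limit of elementary tensors $\sum_k a_k\otimes b_k\otimes c_k$. Applying the renormalised double slice to such a tensor, and running the computation of Theorem~\ref{thm:cap_cmpt_kac_case} twice -- once for each factor $\Delta$, each time using that $\varphi$ is a trace, that $\varphi\circ R=\varphi$, and that the normalised slices approximate the Haar state of $C^*_\delta(G)$ -- expresses the result, in the limit over $i$, as a combination of slices of $W^*$ lying in $C^*_\delta(G)=\overline{\lin}\{\lambda(s):s\in G\}$; recall from the start of Section~\ref{sec:cocomcase} (and Chou's identity $\mc P^\infty(\hat G)=\mathbb C[G]$) that $C^*_\delta(G)=\mathbb{AP}(C_0(\hat G))$. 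Letting the tensor-product approximation improve then exhibits $x_0$ as a norm-limit of elements of the norm-closed algebra $C^*_\delta(G)$, whence $x_0\in C^*_\delta(G)$. The reason the \emph{iterated} coproduct is needed -- rather than just $\Delta(x_0)\in VN(G)\otimes VN(G)$, i.e.\ Runde's $\CAP$ -- is that a single slice recovers only the \emph{orbit} $\{\omega\star x_0:\omega\in A(G)\}$, and for non-amenable $G$ there is no bounded approximate identity in $A(G)$ to pass from the orbit back to $x_0$; the double slice recovers $x_0$ directly.

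The hard part will be the third step: making the double trace-and-slice manipulation rigorous when $\varphi$ is only a semifinite weight -- keeping track of the GNS domains, justifying the strong-invariance identities in the unbounded setting, and controlling the passage to the limit over $i$ -- and, above all, verifying that the doubly-sliced approximants really do lie in $C^*_\delta(G)$, this being the exact point at which the [SIN] central approximate identity and the hypothesis on $\Delta^2(x_0)$ are both consumed.
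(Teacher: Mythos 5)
Your overall strategy (trace, central/invariant approximate units, averaging against $W\,\cdot\,W^*$) is the right one and matches the paper's in outline, but there is a genuine gap at exactly the point you defer to the end, and your stated explanation of why the \emph{iterated} coproduct is needed is not the correct one. A single renormalised slice $(\omega_{\xi_V}\otimes\iota)(W\Delta(x)W^*)$ already recovers $x$ exactly (not just its orbit), so the obstruction is not the absence of a bounded approximate identity in $A(G)$. The real problem is what the single averaging does to an \emph{elementary tensor}: in the limit over invariant neighbourhoods one finds $\alpha(a\otimes b)=\mu_{b\xi_1,R(a)^*\xi_1}$, a functional on $C_0(G)$, i.e.\ a general measure in $M(G)$ acting by convolution (Proposition~\ref{prop:what_alpha_is}). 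Approximating $\Delta(x_0)$ by elementary tensors therefore only places $x_0$ in the norm closure of $M(G)$ inside $VN(G)$, which for non-discrete $G$ is strictly larger than $C^*_\delta(G)=\overline{\lin}\{\lambda(s)\}$: your claim that the sliced approximants are ``combinations of slices of $W^*$ lying in $C^*_\delta(G)$'' fails, because slices of $W^*$ against measures with a continuous part do not lie in $C^*_\delta(G)$.

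The missing idea, which your proposal does not contain and which cannot be supplied by ``running the compact computation twice using the trace'', is that the GNS construction for the limit trace $\Phi=\lim_{\mc U}\omega_{\xi_V}$ \emph{annihilates the continuous part of measures}: for $\mu,\nu\in M(G)$ one has $\Phi(\nu^*\mu)=(\nu^*\mu)(\{e\})=\sum_s\overline{\nu(\{s\})}\mu(\{s\})$, so the GNS vector $\mu\xi_1$ coincides with $\theta(a)$ for $a\in\ell^1(G)$ the atomic part of $\mu$, and consequently $\mu_{\mu\xi_1,\,\eta}\in\ell^1(G)$. This is why two averagings are required: the first turns each elementary tensor into a measure, and the second sees only its atomic part and hence lands in $\ell^1(G)\subseteq C^*_\delta(G)$. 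Without this atomic-part lemma (and the complete boundedness of $\alpha$ needed to extend it to a map $\overline\alpha$ on the triple tensor product so that $\alpha\overline\alpha\Delta^2(x_0)=x_0$), the argument stalls at $x_0\in\overline{M(G)}$, which is weaker than the assertion of the theorem.
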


By definition, $G$ is [SIN] group if $G$ contains
arbitrarily small neighbourhoods of the identity which are invariant under
inner automorphisms.  Equivalently, by \cite[Proposition~4.1]{tay},
$VN(G)$ is a finite von Neumann algebra.  Discrete, compact and
abelian groups are all [SIN] groups.  A connected group $G$ is [SIN] if
and only if the quotient of $G$ by its centre is compact, if and only if
$G\rightarrow G^\sap$ is injective, see \cite[Chapter~12]{palmer}.
Also we note that [SIN] groups are always unimodular.

Recall that the fundamental unitary $W$ of $VN(G)$ satisfies
$W\xi(s,t) = \xi(ts,t)$ for $\xi\in L^2(G\times G), s,t\in G$.
As $A(G)$ is commutative, we of course have that $\omega\star x =
x\star\omega$ for $\omega\in A(G)$ and $x\in VN(G)$.  We write the module
action of $VN(G)$ on $A(G)$ by juxtaposition.

\begin{lemma}
Let $G$ be unimodular, and let $\xi\in L^2(G)$ satisfy $\xi(ab)=
\xi(ba)$ for (almost) every $a,b\in G$.  For all $z\in VN(G\times G)$
we have that $(\omega_\xi\otimes\iota)(WzW^*) \in VN(G)$.  Furthermore,
for $x,y\in VN(G)$ we have that $(\omega_\xi\otimes\iota)(W(x\otimes y)W^*)
= \varphi_x \star y$ where $\varphi_x = (\omega_\xi x)\circ R \in A(G)$.
\end{lemma}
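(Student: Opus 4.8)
The plan is to reduce both assertions to a single explicit computation, that of $(\omega_\xi\otimes\iota)(W(\lambda(a)\otimes\lambda(b))W^*)$ for $a,b\in G$, and then to bootstrap by normality. Throughout I would use that $\hat G$ is a cocommutative Kac algebra, so the unitary antipode $R$ is the (bounded) antipode with $R(\lambda(a))=\lambda(a^{-1})$, and the coproduct on $VN(G)$ satisfies $\Delta(\lambda(a))=\lambda(a)\otimes\lambda(a)$; I would also record that $W^*\zeta(s,t)=\zeta(t^{-1}s,t)$, which is immediate from $W\zeta(s,t)=\zeta(ts,t)$.

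A preliminary fact I would isolate first is that the vector functional $\omega_\xi$ restricts to a \emph{trace} on $VN(G)$. Putting $b=a^{-1}s$ in the hypothesis $\xi(ab)=\xi(ba)$ shows that $\xi$ is invariant under all inner automorphisms of $G$; combining this with unimodularity (right Haar measure $=$ left Haar measure) a short change of variables gives $\omega_\xi(\lambda(a)\lambda(b))=\omega_\xi(\lambda(b)\lambda(a))$ for all $a,b\in G$, and since $\mathbb C[G]$ is $\sigma$-weakly dense in $VN(G)$ and $\omega_\xi$ is normal, $\omega_\xi$ is tracial on $VN(G)$.

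For the core computation I would test $(\omega_\xi\otimes\iota)(W(\lambda(a)\otimes\lambda(b))W^*)$ against vectors $\xi\otimes\eta$ and $\xi\otimes\zeta$, rewrite the resulting matrix coefficient as $\big((\lambda(a)\otimes\lambda(b))W^*(\xi\otimes\eta)\,\big|\,W^*(\xi\otimes\zeta)\big)$, insert the explicit formula for $W^*$, and integrate out the first (``$s$'') variable using left-invariance of Haar measure. This leaves an integral over $t$ of $\eta(b^{-1}t)\overline{\zeta(t)}$ against a scalar of the form $(\lambda(t^{-1}ab^{-1}t)\xi\,|\,\xi)$, and the point where the hypotheses genuinely enter is that, by the trace property of $\omega_\xi$ just established, this scalar is independent of $t$ and equals $\varphi_{\lambda(a)}(\lambda(b))=(\lambda(b^{-1}a)\xi\,|\,\xi)$. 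Pulling it out of the $t$-integral yields $\varphi_{\lambda(a)}(\lambda(b))\,(\lambda(b)\eta\,|\,\zeta)$, i.e. $(\omega_\xi\otimes\iota)(W(\lambda(a)\otimes\lambda(b))W^*)=\varphi_{\lambda(a)}\star\lambda(b)$ (recalling $\varphi_{\lambda(a)}\star\lambda(b)=(\varphi_{\lambda(a)}\otimes\iota)\Delta(\lambda(b))=\varphi_{\lambda(a)}(\lambda(b))\lambda(b)$). I expect this computation to be the only real obstacle: one must recognise that ``centrality of $\xi$ together with unimodularity'' is exactly what makes the integrand $t$-independent, and there is some routine measure-theoretic bookkeeping hidden in the ``almost every'' hypothesis and in the use of Fubini, which I would not dwell on.

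Finally I would bootstrap. For the first assertion, the map $z\mapsto(\omega_\xi\otimes\iota)(WzW^*)$ on $\mc B(L^2(G\times G))$ is normal and, by the core computation, it sends the $\sigma$-weakly dense $*$-subalgebra $\mathbb C[G]\odot\mathbb C[G]$ into $VN(G)$ (as $\varphi_x\star y\in A(G)\star VN(G)\subseteq VN(G)$); since $VN(G)$ is $\sigma$-weakly closed in $\mc B(L^2(G))$, the whole of $VN(G\times G)=VN(G)\vnten VN(G)$ is mapped into $VN(G)$. For the final identity, I would extend it one variable at a time: for fixed $y\in\mathbb C[G]$ both $x\mapsto(\omega_\xi\otimes\iota)(W(x\otimes y)W^*)$ and $x\mapsto\varphi_x\star y$ are $\sigma$-weakly continuous (the latter because $R$ is normal, so $x\mapsto\varphi_x=(\omega_\xi x)\circ R$ is, and composing with the $\star$-action preserves this) and they agree on $\mathbb C[G]$, hence on all of $VN(G)$; then, for fixed $x\in VN(G)$, a symmetric argument in $y$ (using normality of $\Delta$) extends the identity to all $x,y\in VN(G)$, completing the proof.
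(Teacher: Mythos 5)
Your proposal is correct and follows essentially the same route as the paper: an explicit computation of $(\omega_\xi\otimes\iota)(W(\lambda(a)\otimes\lambda(b))W^*)$ using the formula for $W^*$, followed by extension to all of $VN(G\times G)$ by separate weak$^*$-continuity and $\sigma$-weak density of $\mathbb C[G]$. The only (cosmetic) difference is that you first package the hypotheses on $\xi$ and unimodularity into the statement that $\omega_\xi$ is a trace on $VN(G)$ and use that to make the inner integral $t$-independent, whereas the paper performs the same substitutions directly inside the double integral.
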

\begin{proof}
Let $s,t\in G$ and let $w = (\omega_\xi\otimes\iota)(W(\lambda(s)\otimes
\lambda(t))W^*)$.  Let $f,g\in L^2(G)$, and calculate:
\begin{align*}
\big( w f \big| g \big)
&= \big( (\lambda(s) \otimes\lambda(t)) W^*(\xi\otimes f) \big|
  W^*(\xi\otimes g) \big) \\
&= \int_G \int_G W^*(\xi\otimes f)(s^{-1}a,t^{-1}b)
  \overline{ W^*(\xi\otimes g)(a,b) } \ da \ db  \\
&= \int_G \int_G \xi(b^{-1}ts^{-1}a) \overline{\xi(b^{-1}a)}
   \ da \ f(t^{-1}b) \overline{g(b)} \ db \\
&= \int_G \int_G \xi(ts^{-1}ab^{-1}) \overline{\xi(ab^{-1})}
   \ da \ f(t^{-1}b) \overline{g(b)} \ db
   \qquad\text{by assumption on $\xi$}\\
&= \int_G \int_G \xi(ts^{-1}a) \overline{\xi(a)}
   \ da \ f(t^{-1}b) \overline{g(b)} \ db
   \qquad\text{as $G$ is unimodular}\\
&= \ip{\lambda(st^{-1})}{\omega_\xi} \big( \lambda(t) f \big| g \big).
\end{align*}
Thus $w = \ip{\lambda(st^{-1})}{\omega_\xi} \lambda(t)\in VN(G)$.
Now, 
\[ \varphi_{\lambda(s)} \star \lambda(t)
= \ip{\lambda(t)}{\varphi_{\lambda(s)}} \lambda(t)
= \ip{\lambda(t^{-1})}{\omega_\xi \lambda(s)} \lambda(t)
= w, \]
and so the stated formula holds when $x=\lambda(s)$ and $y=\lambda(t)$.
By separate weak$^*$-continuity, the formula holds for all $x,y$.
Then, by weak$^*$-continuity again, it follows that for any $z\in
VN(G\times G)$ we do indeed have that
$(\omega_\xi\otimes\iota)(WzW^*) \in VN(G)$.
\end{proof}

The obvious use of this result is that $(\omega_\xi\otimes\iota)(W\Delta(x)W^*)
= x$ for any $x\in VN(G)$ and any suitable $\xi$.  Thus this is very similar
to the argument in Theorem~\ref{thm:cap_cmpt_kac_case} above.

Note that $\varphi_x = R(x)(\omega_\xi\circ R)$ and that if $\xi$ is
real-valued, then $\omega_\xi\circ R=\omega_\xi$, as for $x\in VN(G)$, we have
that $R(x) = \hat Jx^*\hat J$, and here $\hat J:L^2(G)\rightarrow L^2(G)$
is just the map of pointwise conjugation.  If $G$ is a [SIN] group,
then whenever $V$ is an invariant neighbourhood of the identity, then
$\xi_V = |V|^{-1/2} \chi_V$ will be an invariant unit vector in $L^2(G)$.
Arguing as in \cite[Section~3]{tay} we see that $\omega_\xi$ is a tracial
state in $A(G)$.  Let $\mc U$ be an ultrafilter refining the order filter
on the set $I$ of invariant neighbourhoods of the identity in $G$, and let
$\Phi$ be the weak$^*$-limit, taken in $VN(G)^*$, along $\mc U$,
of the net $(\omega_{\xi_V})$.  Thus $\Phi$ is a trace on $VN(G)$.  Define
\[ \alpha:VN(G\times G)\rightarrow VN(G); \quad
\alpha(x) = \lim_{\mc U} (\omega_{\xi_V}\otimes\iota)(W(x\otimes y)W^*), \]
where the limit is in the weak$^*$-topology on $VN(G)$.
For $\omega\in A(G)$,
\begin{align*}
\ip{\alpha(x\otimes y)}{\omega}
&= \lim_{\mc U} \ip{(R(x)\omega_{\xi_V})\star y}{\omega}
= \lim_{\mc U} \ip{\Delta(y)(R(x)\otimes 1)}{\omega_{\xi_V} \otimes \omega}
= \Phi\big( (\omega\star y) R(x) \big). \end{align*}
Let $(H,\pi,\xi_0)$ be the GNS construction for $\Phi$.  Hence
\[ \ip{\alpha(x\otimes y)}{\omega}
= \big( \pi(\omega\star y)\xi_0 \big| \pi(R(x^*)) \xi_0 \big). \]

We wish to find $H$ is a more concrete way, for which we turn to the
notion of an ultrapower of a Banach space, \cite{hein}.
Let $\ell^\infty(L^2(G),I)$ be the Banach space of bounded families of
vectors in $L^2(G)$ indexed by $I$.  Define a degenerate inner-product
on $\ell^\infty(L^2(G),I)$ by
\[ \big( (\xi_i) \big| (\eta_i) \big) = \lim_{i\rightarrow\mc U}
(\xi_i|\eta_i). \]
The null-space is $N_{\mc U} = \{ (\xi_i) : \lim_{i\rightarrow\mc U}
\|\xi_i\|=0 \}$ and $\ell^\infty(L^2(G),I) / N_{\mc U}$ becomes a Hilbert
space, denoted by $(L^2(G))_{\mc U}$.  The equivalence class defined by
$(\xi_i)$ will be denoted by $[\xi_i]$.  In particular, set $\xi_1
= [\xi_V]$.  Any $T\in\mc B(L^2(G))$ acts on $(L^2(G))_{\mc U}$ by
$T[\xi_i] = [T(\xi_i)]$.  It is now easy to verify that the map
\[ \pi(x)\xi_0 \mapsto [x(\xi_V)] = x\xi_1 \qquad (x\in VN(G)) \]
is an isometry, and so extends to an isometric embedding $H\rightarrow
(L^2(G))_{\mc U}$.  We shall henceforth identify $H$ with a closed subspace
of $(L^2(G))_{\mc U}$.

\begin{lemma}
For $x\in VN(G)$ and $\omega\in A(G)$, we have that $(\omega\star x)\xi_1
= (\omega\otimes\iota)(W^*) x\xi_1$.  Hence $H$ is an invariant subspace of
$(L^2(G))_{\mc U}$ for the action of $C_0(G)$.
\end{lemma}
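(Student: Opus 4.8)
The plan is to prove the displayed identity first, and then read off the invariance of $H$ as a short formal consequence.

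Fix $\omega\in A(G)$ and write $\omega=\omega_{\alpha,\beta}$ for some $\alpha,\beta\in L^2(G)$, which is always possible since $A(G)=VN(G)_*$. A direct computation slicing the first leg of $W^*$ shows that $(\omega\otimes\iota)(W^*)=M_u$, the operator of multiplication by the function $u(t)=(\lambda(t)\alpha\mid\beta)$, which lies in $A(G)\subseteq C_0(G)$; as an operator on $(L^2(G))_{\mc U}$ it sends $[\zeta_V]$ to $[M_u\zeta_V]$. The one substantive input is that $\xi_V$ concentrates at the identity: since $W(\alpha\otimes\xi_V)(s,t)=\alpha(ts)\xi_V(t)$ and $\xi_V$ is a unit vector supported in $V$, one computes
\[ \|W(\alpha\otimes\xi_V)-\alpha\otimes\xi_V\|^2
= \int_G |\xi_V(t)|^2\,\|\lambda(t^{-1})\alpha-\alpha\|^2\,dt
\le \sup_{t\in V}\|\lambda(t^{-1})\alpha-\alpha\|^2, \]
which tends to $0$ along the net $I$ of invariant neighbourhoods of the identity, by strong continuity of $\lambda$ at $e$. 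Note that this bound depends only on $\alpha$.

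Now let $x\in VN(G)$ and let $(\eta_V)_{V\in I}$ be an arbitrary bounded family in $L^2(G)$. Using $\omega\star x=(\omega\otimes\iota)\Delta(x)$ and $\Delta(x)=W^*(1\otimes x)W$, we have
\[ \big((\omega\star x)\xi_V\bigm|\eta_V\big)
= \big(W^*(1\otimes x)W(\alpha\otimes\xi_V)\bigm|\beta\otimes\eta_V\big). \]
Replacing $W(\alpha\otimes\xi_V)$ by $\alpha\otimes\xi_V$ costs at most $\|x\|\,\|\beta\|\,\big(\sup_V\|\eta_V\|\big)\,\|W(\alpha\otimes\xi_V)-\alpha\otimes\xi_V\|$, which tends to $0$; and since $(1\otimes x)(\alpha\otimes\xi_V)=\alpha\otimes x\xi_V$, the main term equals $\big(\alpha\otimes x\xi_V\bigm|W(\beta\otimes\eta_V)\big)$, which the same slicing computation identifies with $\int_G(x\xi_V)(t)\,\overline{\eta_V(t)}\,u(t)\,dt=(M_u x\xi_V\mid\eta_V)$. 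Passing to the limit along $\mc U$ gives $\langle(\omega\star x)\xi_1,[\eta_V]\rangle=\langle(\omega\otimes\iota)(W^*)x\xi_1,[\eta_V]\rangle$. Because the error bound involves only $\sup_V\|\eta_V\|$, this holds for \emph{every} bounded family, hence against every element $[\eta_V]$ of $(L^2(G))_{\mc U}$, and we conclude $(\omega\star x)\xi_1=(\omega\otimes\iota)(W^*)x\xi_1$.

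For the last assertion, recall that $C_0(G)=C_0(\hat\G)$ is the closed linear span of $\{(\omega\otimes\iota)(W):\omega\in A(G)\}$, hence (taking adjoints) also of $\{(\omega\otimes\iota)(W^*):\omega\in A(G)\}$, and that $C_0(G)$ acts on $(L^2(G))_{\mc U}$ by multiplication operators of norm $\|f\|_\infty$. For $x\in VN(G)$ we have $\omega\star x=(\omega\otimes\iota)\Delta(x)\in VN(G)$, so the identity just proved shows $(\omega\otimes\iota)(W^*)(x\xi_1)=(\omega\star x)\xi_1\in\{y\xi_1:y\in VN(G)\}\subseteq H$. As $H$ is closed, spanned densely by the vectors $x\xi_1$, and the elements $(\omega\otimes\iota)(W^*)$ are dense in $C_0(G)$, the usual approximation argument using the uniform bound $\|f\|_{\mathrm{op}}=\|f\|_\infty$ gives $fH\subseteq H$ for every $f\in C_0(G)$. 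The only delicate point is ensuring the error estimate in the middle step is uniform over the family $(\eta_V)$, which is what allows the passage to the ultralimit; as observed this is automatic, and everything else is bookkeeping with the formulas for $W$, $\Delta$ and the $A(G)$-module action.
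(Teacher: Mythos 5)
Your argument is correct and is essentially the paper's proof: the one substantive input in both is the estimate $\|W(f\otimes\xi_V)-f\otimes\xi_V\|\to 0$ (which you usefully make explicit via $\sup_{t\in V}\|\lambda(t^{-1})f-f\|$), followed by density of $\{(\omega\otimes\iota)(W^*):\omega\in A(G)\}$ in $C_0(G)$ for the invariance of $H$. The only difference is stylistic: the paper obtains the identity by a direct norm estimate, writing both vectors as $(\overline{g}\otimes\iota)W^*(1\otimes x)$ applied to $W(f\otimes\xi_V)$ and to $f\otimes\xi_V$ respectively, which avoids your detour through pairing against arbitrary bounded families and the attendant uniformity check.
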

\begin{proof}
A direct calculation easily establishes that
\[ \lim_{V\rightarrow\{e\}} \| W(f\otimes\xi_V) - f\otimes\xi_V \|
= 0 \qquad (f\in L^2(G)). \]
Let $g\in L^2(G)$, and let $(\overline{g}\otimes\iota):L^2(G\times G)
\rightarrow L^2(G)$ be the operator $\xi\otimes\eta \mapsto (\xi|g)\eta$.
Then, for $\omega=\omega_{f,g}\in A(G)$,
\begin{align*} \| & (\omega\star x)\xi_1 -
  (\omega\otimes\iota)(W^*) x\xi_1 \| \\
&= \lim_{V\rightarrow\mc U} \| (\overline{g}\otimes\iota)W^*(1\otimes x)W
  (f\otimes\xi_V)
  - (\overline{g}\otimes\iota)W^*(1\otimes x)(f\otimes\xi_V) \| \\
&= \lim_{V\rightarrow\mc U} \| (\overline{g}\otimes\iota)W^*(1\otimes x)
\big( W(f\otimes\xi_V) - f\otimes\xi_V \big) \| = 0,
\end{align*}
as required.  As $\{ (\omega\otimes\iota)(W^*) : \omega\in A(G) \}$
is dense in $C_0(G)$, by continuity, it follows that $C_0(G)$, acting on
$(L^2(G))_{\mc U}$, maps $H$ to $H$.
\end{proof}

Let $\hat\pi:C_0(G)\rightarrow\mc B(H)$ the resulting $*$-homomorphism.
For $\xi,\eta\in H$, it follows that $\omega_{\xi,\eta}\circ\hat\pi:
C_0(G)\rightarrow\mathbb C$ is a functional, and so defines a measure in
$M(G) = C_0(G)^*$.  Then left convolution by the measure defines a member
of $VN(G)$ (actually, of $M(C^*_r(G))$) which we shall denote by
$\mu_{\xi,\eta}$.  Notice that for $\mu\in M(G)$, the convolution operator
so defined is $(\iota\otimes\mu)(W^*)$, which makes sense, as
$W^*\in M(C^*_r(G)\otimes C_0(G))$.
For the following result, notice that for $x\in VN(G)$,
we have that $R(x^*)\xi_1 = [ \overline{x\xi_V} ]$ and so
it follows that $\mu_{y\xi_1, R(x)^*\xi_1} = \mu_{x\xi_1, R(y)^*\xi_1}$.

\begin{proposition}\label{prop:what_alpha_is}
We have that $\alpha(x\otimes y) = \mu_{y\xi_1, R(x)^*\xi_1}$.
Consequently, for $z\in VN(G)$ with $\Delta(z)\in VN(G)\otimes VN(G)$,
we have that $z$ is in the norm closure of $M(G)$ inside $VN(G)$.
\end{proposition}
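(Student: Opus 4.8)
The plan is to establish the displayed identity $\alpha(x\otimes y) = \mu_{y\xi_1, R(x)^*\xi_1}$ by testing both sides against functionals in $A(G) = VN(G)_*$, and then to derive the ``consequently'' clause from the contractivity of $\alpha$ together with the identity $W\Delta(z)W^* = 1\otimes z$.

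First I would unwind the definition of $\mu_{y\xi_1, R(x)^*\xi_1}$. By construction it is the convolution operator $(\iota\otimes\nu)(W^*)$ attached to the measure $\nu = \omega_{y\xi_1,\,R(x)^*\xi_1}\circ\hat\pi \in M(G)$, so it lies in $M(C^*_r(G))\subseteq VN(G)$; since $\alpha(x\otimes y)\in VN(G)$ as well (by the preceding lemma, plus weak$^*$-compactness of the ball for the limit along $\mc U$), it suffices to match the two after pairing with an arbitrary $\omega\in A(G)$. Slicing gives
\[ \ip{\mu_{y\xi_1,\,R(x)^*\xi_1}}{\omega}
= \ip{(\omega\otimes\iota)(W^*)}{\nu}
= \big( \hat\pi\big((\omega\otimes\iota)(W^*)\big)\, y\xi_1 \,\big|\, R(x)^*\xi_1 \big). \]
Now $(\omega\otimes\iota)(W^*)\in C_0(G)$ and $\hat\pi$ is nothing but the spatial action on the invariant subspace $H\subseteq (L^2(G))_{\mc U}$, so by the previous lemma $\hat\pi((\omega\otimes\iota)(W^*))\, y\xi_1 = (\omega\otimes\iota)(W^*)\, y\xi_1 = (\omega\star y)\xi_1$. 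Feeding this through the identification $\pi(w)\xi_0 \leftrightarrow w\xi_1$ of $H$ with a subspace of $(L^2(G))_{\mc U}$ — so in particular $R(x)^*\xi_1 = R(x^*)\xi_1 = \pi(R(x^*))\xi_0$ — turns the right-hand side into $\big(\pi(\omega\star y)\xi_0 \,\big|\, \pi(R(x^*))\xi_0\big)$, which is exactly $\ip{\alpha(x\otimes y)}{\omega}$ by the formula derived just before the statement. As $\omega\in A(G)$ was arbitrary and $A(G)$ separates points of $VN(G)$, the identity follows.

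For the second assertion I would first note that $\alpha$ is a linear norm-contraction: it is a pointwise weak$^*$-limit along $\mc U$ of the normal completely positive contractions $w\mapsto(\omega_{\xi_V}\otimes\iota)(WwW^*)$ (each $\omega_{\xi_V}$ is a state), and each of these lands in $VN(G)$ by the lemma. By the identity just proved, $\alpha$ carries every elementary tensor, hence (by linearity) every element of the algebraic tensor product $VN(G)\odot VN(G)$, into the copy of $M(G)$ sitting inside $VN(G)$; by norm-continuity it therefore maps the minimal C$^*$-tensor product $VN(G)\otimes VN(G)$, the norm closure of $VN(G)\odot VN(G)$, into the norm closure of $M(G)$ in $VN(G)$. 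On the other hand, for any $z\in VN(G)$ we have $W\Delta(z)W^* = 1\otimes z$, and since $\omega_{\xi_V}$ is a unit vector state, $(\omega_{\xi_V}\otimes\iota)(W\Delta(z)W^*) = z$ for every $V$, so $\alpha(\Delta(z)) = z$. Hence, if $\Delta(z)\in VN(G)\otimes VN(G)$, then $z = \alpha(\Delta(z))$ lies in the norm closure of $M(G)$ inside $VN(G)$, as claimed.

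The main thing to be careful about is the bookkeeping of identifications — that under the isometric embedding $H\hookrightarrow(L^2(G))_{\mc U}$ the GNS vector $\pi(w)\xi_0$ is genuinely the class $w\xi_1$, that $R(x)^* = R(x^*)$, and that $\hat\pi$ restricted to $C_0(G)$ is just multiplication on $H$; once these are pinned down, both computations are purely formal and use nothing beyond the two lemmas immediately preceding the statement.
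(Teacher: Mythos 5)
Your argument is correct and follows essentially the same route as the paper: both proofs verify the identity by pairing against an arbitrary $\omega\in A(G)$ and chaining the formula $\ip{\alpha(x\otimes y)}{\omega}=\bigl(\pi(\omega\star y)\xi_0\big|\pi(R(x^*))\xi_0\bigr)$ with the lemma $(\omega\star y)\xi_1=(\omega\otimes\iota)(W^*)y\xi_1$ (you merely read the chain in the opposite direction), and both deduce the second claim from contractivity of $\alpha$ together with $\alpha\Delta(z)=z$ applied to a finite-rank approximant of $\Delta(z)$. No gaps.
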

\begin{proof}
For $\omega\in A(G)$, we have that
\begin{align*} \ip{\alpha(x\otimes y)}{\omega}
&= \big( \pi(\omega\star y)\xi_0 \big| \pi(R(x^*)) \xi_0 \big)
= \big( (\omega\otimes\iota)(W^*) y\xi_1 \big| R(x^*)\xi_1 \big) \\
&= \ip{ (\omega\otimes\iota)(W^*) }{\omega_{y\xi_1,R(x^*)\xi_1}\circ\hat\pi}
= \ip{ \mu_{y\xi_1,R(x^*)\xi_1} }{\omega}.
\end{align*}
For $\epsilon>0$, we can find $\tau\in VN(G)\odot VN(G)$ with
$\|\Delta(z)-\tau\|<\epsilon$.  Then
\[ \| z - \alpha(\tau) \| = \| \alpha\Delta(z) - \alpha(\tau) \|
< \epsilon. \]
We have just established that $\alpha(\tau)\in M(G)$ (inside $VN(G)$)
and so the result follows.
\end{proof}

Of course, we would like to prove that such $z$ are actually in
$C^*_\delta(G)$.  Let $M$ denote the norm closure of $M(G)$ in $VN(G)$.
Consider the closure of $\{ \pi(\mu)\xi_0 : \mu\in M \}$ in $H$.  We shall
shortly see that this Hilbert space is isomorphic to $\ell^2(G)$.  However,
we have been unable to decide if this is all of $H$ or not.  Furthermore, just
knowing that $\Delta(z) \in VN(G)\otimes VN(G)$ and that $z\in M$ does not
tell us that we can approximate $\Delta(z)$ by an element of $M\odot M$.
In the classical situation, when we compute the Bohr compactification of
$G$ (and not $\hat G$) then all our C$^*$-algebras are commutative, and so
they all have the approximation property, and so knowing, for example,
that $\omega\star z\in M$ for all $\omega$ does tell us that $z\in
M\otimes M$.  In our setting, working with operator spaces, it seems very
unlikely that $M$ will have the required operator space version of the
approximation property.  We consequently impose a slightly stronger condition;
see Theorem~\ref{thm:cap_partial_case} below.

Define $\theta:\ell^2(G)\rightarrow H$ by $\delta_s\mapsto \lambda(s)\xi_1$.
We note that
\[ \big( \lambda(s)\xi_1 \big| \lambda(t)\xi_1 \big)
= \lim_{V\rightarrow\mc U} \ip{\lambda(t^{-1}s)}{\omega_{\xi_V}}
= \delta_{t,s}, \]
because if $t^{-1}s$ is not the identity, then
$\ip{\lambda(t^{-1}s)}{\omega_{\xi_V}}$ will be zero for a sufficiently small
neighbourhood $V$.  It follows that $\theta$ is an isometry, and so $\theta
\theta^*$ is the orthogonal projection of $H$ into $\theta(\ell^2(G))$.

\begin{lemma}\label{lem:ell2_inv}
The action of $C_0(G)$ on $H$ leaves $\ell^2(G)$ invariant, and so
$\mu_{\theta(h),\eta} = \mu_{\theta(h),\theta\theta^*\eta} \in \ell^1(G)$
for all $h=(h_s)\in\ell^2(G)$ and $\eta\in H$.
\end{lemma}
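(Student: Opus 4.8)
The plan is to show that $C_0(G)$ acts \emph{diagonally} on the copy $\theta(\ell^2(G))$ of $\ell^2(G)$ inside $H$, in the sense that $\hat\pi(f)\theta(\delta_s) = f(s)\theta(\delta_s)$ for every $f\in C_0(G)$ and $s\in G$; everything in the lemma will follow from this. First I would recall that $C_0(G)$ acts on $L^2(G)$ by pointwise multiplication -- this is the canonical action, and indeed each slice $(\omega\otimes\iota)(W)$, $\omega\in A(G)$, is such a multiplication operator -- so that $C_0(G)$ acts on the ultrapower $(L^2(G))_{\mc U}$ by $f\cdot[\eta_i]=[f\eta_i]$, with $\hat\pi$ the restriction of this action to the invariant subspace $H$. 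Since $\theta(\delta_s)=\lambda(s)\xi_1=[\lambda(s)\xi_V]$ and $(\lambda(s)\xi_V)(t)=\xi_V(s^{-1}t)=|V|^{-1/2}\chi_{sV}(t)$, I would observe that $\hat\pi(f)\theta(\delta_s)=[\,|V|^{-1/2}f\chi_{sV}\,]$.

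The one genuine estimate is then that, as $V$ shrinks to $\{e\}$,
\[ \big\| |V|^{-1/2} f\chi_{sV} - f(s)\,|V|^{-1/2}\chi_{sV} \big\|^2
  = |V|^{-1}\int_{sV} |f(t)-f(s)|^2\,dt
  \le \sup_{t\in sV}|f(t)-f(s)|^2 \longrightarrow 0, \]
using left-invariance of Haar measure and continuity of $f$ at $s$; passing to the limit along $\mc U$ gives $\hat\pi(f)\theta(\delta_s)=[\,f(s)|V|^{-1/2}\chi_{sV}\,]=f(s)\theta(\delta_s)$, as desired. Consequently $\hat\pi(f)\theta=\theta M_f$, where $M_f\in\mc B(\ell^2(G))$ is multiplication by $(f(s))_{s\in G}$ (both sides are bounded and agree on the orthonormal basis $\{\delta_s\}$), and in particular $\hat\pi(f)$ maps $\theta(\ell^2(G))$ into itself -- the first assertion.

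For the second half I would fix $h\in\ell^2(G)$, $\eta\in H$, and let $\nu_\eta\in M(G)=C_0(G)^*$ be the measure with $\ip{\nu_\eta}{f}=(\hat\pi(f)\theta(h)\,|\,\eta)$, so that $\mu_{\theta(h),\eta}$ is left convolution by $\nu_\eta$. Since $\hat\pi(f)\theta(h)\in\theta(\ell^2(G))$ while $(1-\theta\theta^*)\eta$ is orthogonal to $\theta(\ell^2(G))$, we get $(\hat\pi(f)\theta(h)\,|\,\eta)=(\hat\pi(f)\theta(h)\,|\,\theta\theta^*\eta)$ for all $f$, whence $\nu_\eta=\nu_{\theta\theta^*\eta}$ and so $\mu_{\theta(h),\eta}=\mu_{\theta(h),\theta\theta^*\eta}$. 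Writing $\theta\theta^*\eta=\theta(k)$ with $k\in\ell^2(G)$ and using $\hat\pi(f)\theta(h)=\theta(M_f h)$ together with the fact that $\theta$ is an isometry, I would compute
\[ \ip{\nu_{\theta(k)}}{f} = (M_f h\,|\,k)_{\ell^2(G)} = \sum_{s\in G} f(s)\,h_s\,\overline{k_s}. \]
By Cauchy--Schwarz $\sum_s|h_s\overline{k_s}|\le\|h\|_{\ell^2(G)}\|k\|_{\ell^2(G)}<\infty$, so $a=(h_s\overline{k_s})_{s\in G}\in\ell^1(G)$, and the display identifies $\nu_\eta$ with the atomic measure $\sum_s a_s\delta_s$; hence $\mu_{\theta(h),\eta}$ is left convolution by an $\ell^1(G)$-measure, i.e. $\sum_s a_s\lambda(s)\in\ell^1(G)\subseteq C^*_\delta(G)\subseteq VN(G)$.

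I expect the only real obstacle to be the diagonalisation identity $\hat\pi(f)\theta(\delta_s)=f(s)\theta(\delta_s)$: it depends on the normalised indicators $\xi_V$ behaving like an approximate identity concentrated at $e$ under pointwise multiplication by continuous functions, so that in the ultrapower the multiplication operator by $f$ collapses to the scalar $f(s)$ on $[\lambda(s)\xi_V]$. Once that is established, the invariance statement, the two expressions for $\mu_{\theta(h),\eta}$, and membership in $\ell^1(G)$ are all elementary manipulations with the isometry $\theta$ and the orthogonal projection $\theta\theta^*$.
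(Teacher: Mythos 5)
Your proposal is correct and follows essentially the same route as the paper: the key step in both is the estimate $\lim_{V\to\mc U}\|f\lambda(s)\xi_V-f(s)\lambda(s)\xi_V\|^2=0$ (using left-invariance of Haar measure and continuity of $f$ at $s$), giving the diagonalisation $\hat\pi(f)\theta(\delta_s)=f(s)\theta(\delta_s)$, from which invariance and the equality of the two measures follow via the projection $\theta\theta^*$. You merely spell out slightly more than the paper does, namely the passage from $\delta_s$ to general $h\in\ell^2(G)$ via $\hat\pi(f)\theta=\theta M_f$ and the explicit Cauchy--Schwarz identification of the resulting measure as an element of $\ell^1(G)$, both of which the paper leaves implicit.
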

\begin{proof}
For $f\in C_0(G)$, we find that
\begin{align*} \lim_{V\rightarrow\mc U} & \| f \lambda(s) \xi_V
- f(s) \lambda(s) \xi_V \|^2
= \lim_{V\rightarrow\mc U}
\int_G |f(t)\xi_V(s^{-1}t) - f(s)\xi_V(s^{-1}t)|^2 \ dt \\
&= \lim_{V\rightarrow\mc U} |V|^{-1}
\int_G |f(t) - f(s)|^2 \chi_V(s^{-1}t) \ dt.
\end{align*}
Now, $\chi_V(s^{-1}t)=0$ unless $t\in sV$, a small neighbourhood of $s$.
As $f$ is continuous, $|f(t) - f(s)|^2$ will be, on average, small on
the set $sV$, and hence the limit is zero.  It follows that $f\theta(\delta_s)
= f(s) \theta(\delta_s)$, and so
\begin{align*} \ip{f}{\mu_{\theta(\delta_s),\eta}}
&= f(s) \big( \theta(\delta_s) \big| \eta \big)
= f(s) \big( \theta(\delta_s) \big| \theta\theta^*\eta \big)
= \ip{f}{\mu_{\theta(\delta_s),\theta\theta^*\eta}},
\end{align*}
as required.
\end{proof}

\begin{lemma}
Let $\mu\in M(G)$, and treat $\mu$ as an operator in $VN(G)$.
Let $a\in\ell^1(G) \subseteq \ell^2(G)$ be the atomic part of $\mu$.
Then $\mu\xi_1 = \theta(a)$.
\end{lemma}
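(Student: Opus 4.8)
The plan is to split the finite measure $\mu$ into its atomic and continuous parts, $\mu=a+\mu_c$, where $a=\sum_s\mu(\{s\})\delta_s\in\ell^1(G)$ is the atomic part and $\mu_c$ is atomless. Recall that $\mu$ acts on $L^2(G)$ as the left-convolution operator $(\iota\otimes\mu)(W^*)$, which, tested against vector functionals, is the weak integral $\int_G\lambda(s)\,d\mu(s)$; in particular $\mu\xi_V=\mu*\xi_V$ for each invariant neighbourhood $V$ of the identity, so in the ultrapower $\mu\xi_1=[\mu\xi_V]=[a*\xi_V]+[\mu_c*\xi_V]$. Thus it suffices to prove: (i) $[a*\xi_V]=\theta(a)$; and (ii) $\lim_{\mc U}\|\mu_c*\xi_V\|_{L^2}=0$, which gives $[\mu_c*\xi_V]=0$.

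Step (i) is routine: as $\sum_s|\mu(\{s\})|=\|a\|_{\ell^1}<\infty$, the series $\sum_s\mu(\{s\})\lambda(s)$ converges in operator norm, to the convolution operator of $a$, so $a*\xi_V=\sum_s\mu(\{s\})\lambda(s)\xi_V$ with tail $\|\sum_{s\notin F}\mu(\{s\})\lambda(s)\xi_V\|$ dominated by $\sum_{s\notin F}|\mu(\{s\})|$ uniformly over the unit vectors $\xi_V$. Since the quotient map $\ell^\infty(L^2(G),I)\to (L^2(G))_{\mc U}$ is linear and contractive it commutes with this norm-convergent series, whence $[a*\xi_V]=\sum_s\mu(\{s\})[\lambda(s)\xi_V]=\sum_s\mu(\{s\})\lambda(s)\xi_1=\theta(a)$, the final series converging in $H$ because $\theta$ is an isometry and $a\in\ell^2(G)$.

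Step (ii) is where the work lies, and the device is to replace $\|\mu_c*\xi_V\|^2$ by the action of a \emph{single} measure: $\|\mu_c*\xi_V\|^2=(L_{\mu_c}^*L_{\mu_c}\xi_V\mid\xi_V)=(L_\nu\xi_V\mid\xi_V)$, where $\nu\in M(G)$ is the measure whose left-convolution operator is $L_{\mu_c}^*L_{\mu_c}$ (concretely $\nu=\rho*\mu_c$, with $\rho$ obtained from $\mu_c$ by complex conjugation and push-forward along $s\mapsto s^{-1}$). Since $\mu_c$ is atomless, so is $\rho$, hence so is the convolution $\nu$; in particular $|\nu|(\{e\})=0$. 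Now $(L_\nu\xi_V\mid\xi_V)=\int_G(\lambda(s)\xi_V\mid\xi_V)\,d\nu(s)$, and $(\lambda(s)\xi_V\mid\xi_V)=|V|^{-1}|sV\cap V|$ lies in $[0,1]$, equals $1$ at $s=e$, and vanishes for $s\notin VV^{-1}$; hence $|(L_\nu\xi_V\mid\xi_V)|\le|\nu|(VV^{-1})$. It remains to show $\lim_{\mc U}|\nu|(VV^{-1})=0$. Given $\epsilon>0$, outer regularity of the finite measure $|\nu|$ (with $|\nu|(\{e\})=0$) yields an open $O\ni e$ with $|\nu|(O)<\epsilon$; as $G$ is [SIN], choose an invariant neighbourhood $V_0$ of $e$ with $V_0V_0^{-1}\subseteq O$ (first pick a symmetric $O_0$ with $O_0O_0\subseteq O$, then an invariant $V_0\subseteq O_0$). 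Then $|\nu|(VV^{-1})\le|\nu|(V_0V_0^{-1})\le|\nu|(O)<\epsilon$ for all $V\subseteq V_0$, and $\{V\in I:V\subseteq V_0\}\in\mc U$ since $\mc U$ refines the order filter on $I$. So $\lim_{\mc U}\|\mu_c*\xi_V\|^2=0$, and with (i) this gives $\mu\xi_1=\theta(a)$.

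The only genuine obstacle is (ii). A direct expansion $\|\mu_c*\xi_V\|^2=\iint(\lambda(s)\xi_V\mid\lambda(t)\xi_V)\,d\mu_c(s)\,d\overline{\mu_c(t)}$ would require showing that the $(|\mu_c|\times|\mu_c|)$-mass of a shrinking net of neighbourhoods of the diagonal in $G\times G$ tends to $0$, and one cannot invoke dominated (or monotone) convergence here, since those fail for nets. Re-expressing the norm-square through the single, still atomless, measure $\nu=\mu_c^**\mu_c$ converts the problem into estimating the $|\nu|$-mass of neighbourhoods of the \emph{single point} $e$, which outer regularity handles at once; atomlessness of $\mu_c$ is used exactly in guaranteeing $|\nu|(\{e\})=0$.
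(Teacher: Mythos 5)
Your proof is correct and is essentially the paper's argument in slightly reorganised form: both reduce to the $*$-homomorphism property of $M(G)\rightarrow VN(G)$ and the computation $\lim_{\mc U}(L_\sigma\xi_V|\xi_V)=\sigma(\{e\})$ for a convolution measure $\sigma$ (your pairing against $(\lambda(s)\xi_V|\xi_V)$ is exactly the paper's pairing of $\nu^*\mu$ against $a_V=(\omega_V\otimes\iota)(W^*)$), together with the fact that the mass of a convolution at $\{e\}$ sees only the atoms. The only difference is that you decompose $\mu=a+\mu_c$ and kill $\|\mu_c\xi_1\|^2$ via $(\mu_c^*\ast\mu_c)(\{e\})=0$, whereas the paper polarises, computes $(\mu\xi_1|\nu\xi_1)=(\nu^*\mu)(\{e\})$ for all pairs, and then expands $\|\mu\xi_1-a\xi_1\|^2$.
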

\begin{proof}
Let $\mu,\nu\in M(G)$ and note that $M(G)$ is a Banach $*$-algebra
(the $*$-operation is $\ip{\mu^*}{a} = \int \overline{a(s^{-1})} d\mu(s)$
for $a\in C_0(G)$) with the natural map $M(G)\rightarrow VN(G)$ a
$*$-homomorphism.  Then
\[ (\mu\xi_1|\nu\xi_1) = \varphi(\nu^*\mu)
= \lim_{V\rightarrow\mc U} \big( \nu^*\mu \xi_V \big| \xi_V \big)
= \lim_{V\rightarrow\mc U} \ip{\nu^*\mu}{(\omega_V\otimes\iota)(W^*)}. \]
If we let $a_V = (\omega_V\otimes\iota)(W^*) \in C_0(G)$, then it's easy
to see that $a_V(e)=1$ for all $V$ (with $e$ the unit of $G$) and that
for any open set $U$ containing $e$, eventually the support of $a_V$ is
contained in $U$.  As $\nu^*\mu$ is a regular measure, it follows that
\[ \lim_{V\rightarrow\mc U} \ip{\nu^*\mu}{(\omega_V\otimes\iota)(W^*)}
= (\nu^*\mu)(\{e\}), \]
the measure of the singleton $\{e\}$.  However, for any Borel set $E$
we have that (see \cite[Theorem~19.11]{hr})
\[ (\nu^*\mu)(E) = \int_G \nu^*(E s^{-1}) \ d\mu(s). \]
Now, for any $s\in G$, we have that $\nu^*(\{s^{-1}\}) = \overline{\nu(\{s\})}$
and there is hence a countable (or possibly finite) subset $F\subseteq G$ with
$\nu^*(\{s^{-1}\})=0$ if $s\not\in F$.  So the function $s\mapsto
\nu^*(\{s^{-1}\})$ is countably supported, and hence
\[ (\nu^*\mu)(\{e\}) = \int_G \nu^*(\{s^{-1}\}) \ d\mu(s)
= \sum_{s\in G} \overline{\nu(\{s\})} \mu(\{s\}). \]
If $a,b\in\ell^1(G)$ are the atomic parts of $\mu$ and $\nu$ respectively,
then it follows that $(\nu^*\mu)(\{e\}) = (b^*a)(\{e\})$ and hence
$(\mu\xi_1|\nu\xi_1) = (a\xi_1|b\xi_1)$.

It hence follows that
\[ \| \mu\xi_1 - a\xi_1 \|^2 = (\mu\xi_1|\mu\xi_1) - (\mu\xi_1|a\xi_1)
- (a\xi_1|\mu\xi_1) + (a\xi_1|a\xi_1) = 0. \]
The proof is then complete by observing that if $\iota:\ell^1(G)
\rightarrow\ell^2(G)$ is the formal identity, then $\theta\iota(a)=a\xi_1$.
\end{proof}

\begin{theorem}[Theorem~\ref{thm:cap_partial_case}]
Let $G$ be a [SIN] group, and let $x_0\in VN(G)$ be such that
$\Delta^2(x_0) \in VN(G)\otimes VN(G)\otimes VN(G)$.
Then $x_0\in C^*_\delta(G)$.
\end{theorem}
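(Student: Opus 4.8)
The plan is to strengthen the conclusion of Proposition~\ref{prop:what_alpha_is} using the extra hypothesis on $\Delta^2$. Write $M$ for the norm closure of $M(G)$ in $VN(G)$, and recall from the build-up to Proposition~\ref{prop:what_alpha_is} the maps $\beta_V\colon VN(G\times G)\to VN(G)$, $\beta_V(z)=(\omega_{\xi_V}\otimes\iota)(WzW^*)$, and their weak$^*$-ultralimit $\alpha$, which satisfies $\alpha\circ\Delta=\iota$ and $\alpha(a\otimes b)=\mu_{b\xi_1,\,R(a)^*\xi_1}$ for $a,b\in VN(G)$. Since conjugation by $W$ is completely isometric and slicing by a state is completely contractive, each $\beta_V$, and hence $\alpha$, is completely contractive. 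The key claim is that $\Delta^2(x_0)\in VN(G)\otimes VN(G)\otimes VN(G)$ (minimal tensor product) forces $\Delta(x_0)\in VN(G)\otimes_{\min}M$. Granting this, the theorem follows by repeating the argument of Proposition~\ref{prop:what_alpha_is} while keeping track of the legs: given $\varepsilon>0$, choose $\tau=\sum_{i=1}^n a_i\otimes\mu_i$ in the algebraic tensor product $VN(G)\odot M(G)$ with $\|\Delta(x_0)-\tau\|<\varepsilon$ (possible since $M(G)$ is norm-dense in $M$). Then $\|x_0-\alpha(\tau)\|=\|\alpha(\Delta(x_0)-\tau)\|<\varepsilon$, while $\alpha(\tau)=\sum_i\mu_{\mu_i\xi_1,\,R(a_i)^*\xi_1}$ by Proposition~\ref{prop:what_alpha_is}. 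By the lemma immediately preceding this theorem, $\mu_i\xi_1=\theta(b_i)$ where $b_i\in\ell^1(G)\subseteq\ell^2(G)$ is the atomic part of the measure $\mu_i$; and then Lemma~\ref{lem:ell2_inv} gives $\mu_{\theta(b_i),\,R(a_i)^*\xi_1}\in\ell^1(G)$. As each $\lambda(f)=\sum_s f(s)\lambda(s)$ (for $f\in\ell^1(G)$) converges in operator norm, $\lambda(\ell^1(G))\subseteq C^*_\delta(G)$, so $\alpha(\tau)\in C^*_\delta(G)$; letting $\varepsilon\to0$ and using that $C^*_\delta(G)$ is norm-closed yields $x_0\in C^*_\delta(G)$.

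To prove the claim, I would first note that $\beta_V\circ\Delta=\iota$ exactly for every $V$: $W\Delta(w)W^*=1\otimes w$ and $\omega_{\xi_V}$ is a state. Since $\beta_V$ is normal, $\iota\otimes\beta_V$ is a normal map on the von Neumann tensor products, and coassociativity in the form $\Delta^2=(\iota\otimes\Delta)\Delta$ then gives $(\iota\otimes\beta_V)(\Delta^2(x_0))=\Delta(x_0)$ for every $V$. Next, complete contractivity of $\alpha$ (and of each $\beta_V$) makes $\iota\otimes\alpha$ and $\iota\otimes\beta_V$ bounded by $1$ on $VN(G)\otimes_{\min}VN(G\times G)$; on the algebraic tensor product one has $(\iota\otimes\alpha)(z)=\lim_{\mc U}(\iota\otimes\beta_V)(z)$ weak$^*$, and the estimate $\|(\iota\otimes\beta_V)(z-z')\|\le\|z-z'\|$ (uniform in $V$) lets one extend this identity to all $z$ in the minimal tensor product by a standard interchange-of-limits argument. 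Taking $z=\Delta^2(x_0)$, which lies in $VN(G)\otimes_{\min}VN(G)\otimes_{\min}VN(G)$ by hypothesis, gives $(\iota\otimes\alpha)(\Delta^2(x_0))=\lim_{\mc U}(\iota\otimes\beta_V)(\Delta^2(x_0))=\Delta(x_0)$. Finally, $\alpha$ carries $VN(G)\otimes_{\min}VN(G)$ into $M$: it sends elementary tensors into $M(G)$ and is norm-continuous, so it maps the norm-closure of the algebraic tensor product into $\overline{M(G)}=M$. Hence $\iota\otimes\alpha$ maps $VN(G)\otimes_{\min}VN(G)\otimes_{\min}VN(G)$ into $VN(G)\otimes_{\min}M$, and so $\Delta(x_0)=(\iota\otimes\alpha)(\Delta^2(x_0))\in VN(G)\otimes_{\min}M$.

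The main obstacle is exactly this claim, and the delicate point within it is that $\alpha$ itself is only completely contractive, not normal: one cannot simply feed $\Delta^2(x_0)$ through $\iota\otimes\alpha$ at the level of the von Neumann tensor product. The way around it is to do all the genuine computation with the normal approximants $\beta_V$ (where $\beta_V\circ\Delta=\iota$ holds on the nose and $\iota\otimes\beta_V$ is an honest normal map), restrict attention to the minimal tensor product---which is where $\Delta^2(x_0)$ lives and where complete boundedness of $\alpha$ gives a well-behaved $\iota\otimes\alpha$---and pass to the ultralimit using uniform boundedness. Once $\Delta(x_0)\in VN(G)\otimes_{\min}M$ is in hand, the remainder is essentially a transcription of the proof of Proposition~\ref{prop:what_alpha_is}, the new ingredient being that the second-leg factors of the approximants may now be taken to be genuine measures, so that the lemma on atomic parts of measures and Lemma~\ref{lem:ell2_inv} apply to pin the approximants down inside $\lambda(\ell^1(G))\subseteq C^*_\delta(G)$.
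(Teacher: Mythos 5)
Your proposal is correct and takes essentially the same route as the paper: collapse two of the three legs of $\Delta^2(x_0)$ with $\alpha$ so that one tensor factor becomes (norm limits of) measures, then apply $\alpha$ once more and use the atomic-part lemma together with Lemma~\ref{lem:ell2_inv} to land in $\lambda(\ell^1(G))\subseteq C^*_\delta(G)$. The only difference is in implementation: the paper defines $\overline\alpha=\alpha\otimes\iota$ on the full von Neumann triple tensor product via an operator-space duality formula and approximates $\Delta^2(x_0)$ directly, whereas you define $\iota\otimes\alpha$ only on the minimal tensor product and justify $(\iota\otimes\alpha)\Delta^2(x_0)=\Delta(x_0)$ through the normal approximants $\beta_V$ and an interchange of limits, extracting the intermediate statement $\Delta(x_0)\in VN(G)\otimes_{\min}M$; both variants are sound.
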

\begin{proof}
Define $\overline\alpha: VN(G)\vnten VN(G)\vnten VN(G) \rightarrow
VN(G)\vnten VN(G)$ by
\[ \ip{\overline\alpha(x)}{\omega\otimes\tau}
= \ip{\alpha( (\iota\otimes\iota\otimes\tau)(x) )}{\omega}. \]
We should justify why this makes sense.  Notice that for $f,g\in L^2(G)$,
\[ \big( \alpha(x)f \big| g \big) = \big( x [W^*(\xi_V \otimes f)] \big|
[W^*(\xi_V\otimes g)] \big), \]
where $[W^*(\xi_V\otimes f)]$ is an element of $(L^2(G\times G))_{\mc U}$.
It's now clear that $\alpha$ is completely bounded, and now standard
operator space techniques show that $\overline\alpha$ exists, and is completely
bounded with $\|\overline\alpha\|_{cb} \leq \|\alpha\|_{cb} = 1$.
For $x\in VN(G)$ and $\omega,\tau\in A(G)$,
\[ \ip{\overline{\alpha}\Delta^2(x)}{\omega\otimes\tau}
= \ip{\alpha\big( (\iota\otimes\iota\otimes\tau)\Delta^2(x) \big)}{\omega}
= \ip{\alpha\Delta\big( (\iota\otimes\tau)\Delta(x) \big)}{\omega}
= \ip{\Delta(x)}{\omega\otimes\tau}. \]
Hence $\overline{\alpha}\Delta^2(x) = \Delta(x)$.

For $\epsilon>0$, we can find $u = \sum_{i=1}^n a_i\otimes b_i\otimes c_i
\in VN(G)\odot VN(G)\odot VN(G)$ with $\|\Delta^2(x_0)-u\|<\epsilon$.
Then
\[ \Big\| x_0 - \sum_i \alpha\big( \alpha(a_i\otimes b_i) \otimes c_i \big)
\Big\| = \big\| x_0 - \alpha\overline\alpha(u) \big\| < \epsilon. \]
However, for each $i$ there is a measure $\mu_i$ with $\alpha(a_i\otimes b_i)
= \mu_i$.  Hence, by Proposition~\ref{prop:what_alpha_is},
\[ \sum_i \alpha\big( \alpha(a_i\otimes b_i) \otimes c_i \big)
= \sum_i \alpha( \mu_i \otimes c_i )
= \sum_i \mu_{\mu_i\xi_1, R(c_i)^*\xi_1}. \]
As $\mu_i\xi_1 \in \theta(\ell^2(G))$, by Lemma~\ref{lem:ell2_inv},
it follows that the sum defines a member of $\ell^1(G)$.  So $x_0$
can be norm approximated by elements of $\ell^1(G)$, that is, $x_0\in
C^*_\delta(G)$.
\end{proof}

We remark (without proof) that if $\Delta^2(x)\in VN(G)\otimes VN(G)\otimes
VN(G)$ then using that $\Delta_*:A(G)\proten A(G)\rightarrow A(G)$ is a
complete surjection, one can show that $A(G)\rightarrow VN(G);
\omega\mapsto \omega\star x$ is completely compact.  It follows, as in
\cite{runde}, that if $G$ is amenable or connected (so $VN(G)$ is injective)
then $x\in\CAP(A(G))$.  Conversely, if $\Delta(x)\in VN(G)\otimes VN(G)$
then $\Delta^2(x) \in VN(G\times G)\otimes VN(G) \cap VN(G)\otimes
VN(G\times G)$, but we don't see why $\Delta^2(x)$ need be in
$VN(G)\otimes VN(G)\otimes VN(G)$.  Obviously $x\in C^*_\delta(G)$ does
imply this, however.

\section{Further examples}\label{sec:egs}

In this section, we study various examples, and present some counter-examples
to conjectures in \cite{soltan}.

\subsection{Commutative case}

There is little to say here-- the categorical construction obviously
agrees with the usual \emph{strongly almost periodic} or \emph{Bohr
compactification}, \cite{berg, holm, holm1}.
Furthermore, in the commutative case, there is no distinction between
the reduced and universal case.

\subsection{Reduced quantum groups}\label{sec:cocomm_red}

In \cite[Question~2]{soltan}, So\l tan asked, in particular, if
$\mathbb{AP}(C_0(\G))$ is always a reduced compact quantum group, or in
our language, if $\mathbb{AP}(C_0(\G)) \rightarrow C(\G^\sap)$ is an
isomorphism.  In this section, we shall show that the answer is ``no'',
even if $\G$ is cocommutative.

As in the previous section, let $\G=\hat G$ for a locally compact group $G$.
Then the compactification of $\G$ is $\widehat{G_d}$, so $C(\G^\sap)
= C^*_r(G_d)$ and $C^u(\G^\sap) = C^*(G_d)$.  We follow the notation of,
in particular, \cite{bkls}, and write again $C^*_\delta(G)$ for the span of
the translation operators $\{ \lambda(s) : s\in G \}$ in $M(C_0(\G))=M(C^*_r(G))$.
Following \cite{bkls}, consider the following surjective Hopf $*$-homomorphisms:
\[ \xymatrix{ C^u(\G^\sap) = C^*(G_d) \ar@{->>}[r]^-{\Psi} &
\mathbb{AP}(C_0^u(\G)) \ar@{->>}[r]^-{\alpha} \ar@/_1pc/@{->>}[rr] &
\mathbb{AP}(C_0(\G)) = C^*_\delta(G) \ar@{->>}[r]^-{\Phi} &
C(\G^\sap) = C^*_r(G_d). } \]
Here we use the notation of \cite{bkls}, except that our map $\alpha$ is
denoted by $\Lambda$ there (which obviously clashes with other notation in
this paper).  Then \cite{bkls} proves the following (some of these
results also follow from work in \cite{bedos} and \cite{dr}):
\begin{itemize}
\item $\alpha$ is an isomorphism if and only if $G$ is amenable;
\item $\Phi\circ\alpha$ is an isomorphism if and only if $G_d$ is amenable;
\item $\Phi$ is an isomorphism if and only if $G$ contains an open subgroup
  $H$ with $H_d$ amenable;
\item if $G$ is a connected Lie group, then $\Psi$ is an isomorphism if
  and only if $G$ is solvable, if and only if $\Phi$ is an isomorphism.
  Recall that in this case, $G$ is solvable if and only if $G_d$ is amenable,
  \cite[Theorem~3.9]{pat}.
\end{itemize}

\begin{example}
In particular, we see that the compact quantum group $\mathbb{AP}(C_0(\G))$
is reduced if and only if $\Phi$ is an isomorphism.  Setting $\G$ to be the
dual of $SU(2)$ or $SO(3)$, we see that $\G$ is a discrete, cocommutative
quantum group, and $\Phi$ is not an isomorphism, as $G$ is connected,
but $G_d$ is not amenable (as it contains a free group,
\cite[Proposition~3.2]{pat}).
\end{example}

\begin{example}
Let $G$ be an amenable, connected Lie group with $G_d$ non-amenable
(again, $G=SU(2)$ or $SO(3)$ works), and set $\G=\hat G$.
Then $\mathbb{AP}(C_0^u(\G)) = \mathbb{AP}(C_0(\G))$, but these are not
equal to either $C_0^u(\G^\sap)$ nor to $C_0(\G^\sap)$.  Such compact quantum
groups, lying strictly between their universal and reduced versions, were
studied in \cite[Section~8]{ks}, so this example gives a whole family of
further ``exotic'' compact quantum group norms.  Furthermore, as again $\G$ is
a discrete quantum group, this answers in the negative a conjecture made
after \cite[Question~1]{soltan}, as $\mathbb{AP}(C_0^u(\G))$ is not
universal.
\end{example}

We finish this section by observing that we can prove something like
analogues for some of the above facts for general quantum groups.

There are many equivalent definitions of what it means for a general locally
compact quantum group $\G$ to be coamenable.  We shall choose the definition
that $\G$ is \emph{coamenable} if the counit is bounded on $C_0(\G)$,
see \cite[Section~3]{bt}.  That is, there is a state $\epsilon\in C_0(\G)^*$
with $(\iota\otimes\epsilon)\Delta = \iota$.  Then $\epsilon$ is unique, and
$(\epsilon\otimes\iota)\Delta=\iota$.  For a locally compact group $G$,
always $G$ is coamenable, while $\hat G$ is coamenable if and only if
$G$ is amenable.

\begin{proposition}\label{prop:some_amen_conds}
Let $\G$ be a locally compact quantum group and consider the maps
$\alpha:\mathbb{AP}(C_0^u(\G)) \rightarrow \mathbb{AP}(C_0(\G))$ and
$\Phi:\mathbb{AP}(C_0(\G)) \rightarrow C(\G^\sap)$.
Then:
\begin{enumerate}
\item\label{prop:some_amen_conds:one}
$\Phi\circ\alpha:\mathbb{AP}(C_0^u(\G)) \rightarrow C(\G^\sap)$ is
injective (and hence an isomorphism) if and only if $\G^\sap$ is
coamenable.
\item\label{prop:some_amen_conds:two}
Suppose that $\G$ is coamenable.  Then the natural map
$\Phi:\mathbb{AP}(C_0(\G)) \rightarrow C(\G^\sap)$ is injective (that is,
$\mathbb{AP}(C_0(\G))$ is reduced) if and only if $\G^\sap$ is coamenable.
\end{enumerate}
\end{proposition}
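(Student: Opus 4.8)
The plan is to reduce both statements to the standard criterion for compact quantum groups: $\K$ is coamenable if and only if its reduced form $C(\K)$ carries a bounded counit, equivalently if and only if the reducing morphism $\Lambda_\K\colon C^u(\K)\to C(\K)$ is an isomorphism (see \cite{bt}, or \cite{bmt} in the compact case). Recall also that, writing $A=\mathbb{AP}(C_0^u(\G))$ for the compact quantum group in question, the composite $\Phi\circ\alpha$ is precisely the reducing morphism of $A$ onto its reduced form $C(\G^\sap)$, while $\Psi\colon C^u(\G^\sap)\to A$ is the universal reducing morphism, so that $(\Phi\circ\alpha)\circ\Psi=\Lambda_{\G^\sap}$; moreover $\Psi$, $\alpha$, $\Phi$, and hence $\Phi\circ\alpha$, are all surjective. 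The one extra ingredient is the observation that $\mathbb{AP}(C_0^u(\G))$ always carries a bounded counit: the universal quantum group $C_0^u(\G)$ has a bounded counit $\epsilon_u$, whose strict extension to $M(C_0^u(\G))$ restricts to $\mathbb{AP}(C_0^u(\G))$; since the coproduct of $\mathbb{AP}(C_0^u(\G))$ is the (co)restriction of $\Delta_u$, this restriction satisfies the counit identities. Similarly, if $\G$ is coamenable then $C_0(\G)$ has a bounded counit $\epsilon$, and restricting its strict extension to $\mathbb{AP}(C_0(\G))\subseteq M(C_0(\G))$ gives a bounded counit there.

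For (\ref{prop:some_amen_conds:one}): if $\G^\sap$ is coamenable then $\Lambda_{\G^\sap}=(\Phi\circ\alpha)\circ\Psi$ is an isomorphism; as $\Psi$ and $\Phi\circ\alpha$ are surjective while their composite is injective, $\Psi$ is injective, hence an isomorphism, and therefore $\Phi\circ\alpha=\Lambda_{\G^\sap}\circ\Psi^{-1}$ is an isomorphism, in particular injective. Conversely, if $\Phi\circ\alpha$ is injective then, being also surjective, it is an isomorphism $\mathbb{AP}(C_0^u(\G))\to C(\G^\sap)$; transporting the bounded counit of $\mathbb{AP}(C_0^u(\G))$ across it yields a bounded counit on $C(\G^\sap)$, so $\G^\sap$ is coamenable.

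For (\ref{prop:some_amen_conds:two}): if $\G^\sap$ is coamenable then by (\ref{prop:some_amen_conds:one}) the map $\Phi\circ\alpha$ is injective; since $\alpha$ is surjective it follows that $\Phi$ is injective (if $\Phi(y)=0$, write $y=\alpha(x)$; then $(\Phi\circ\alpha)(x)=0$ forces $x=0$, hence $y=0$), and this half uses nothing about $\G$. Conversely, assume $\G$ is coamenable and $\Phi$ is injective; then $\Phi$ is an isomorphism $\mathbb{AP}(C_0(\G))\to C(\G^\sap)$, and transporting the bounded counit on $\mathbb{AP}(C_0(\G))$ (which exists because $\G$ is coamenable) across it shows $C(\G^\sap)$ has a bounded counit, so $\G^\sap$ is coamenable.

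I do not expect a serious obstacle here: the only real content is recognising that the ``universal-level'' algebra $\mathbb{AP}(C_0^u(\G))$ inherits a bounded counit from the ambient $M(C_0^u(\G))$, and likewise $\mathbb{AP}(C_0(\G))$ when $\G$ is coamenable, after which both equivalences are a short diagram chase combined with the coamenability criterion. The only care needed is to check that these restricted functionals genuinely satisfy the counit identities, which is immediate because the relevant coproducts are restrictions of $\Delta_u$ and of $\Delta$, and this is also where the hypothesis of coamenability of $\G$ is genuinely used in the converse of (\ref{prop:some_amen_conds:two}).
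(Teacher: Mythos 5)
Your proof is correct and follows essentially the same strategy as the paper's: restrict the (strict extension of the) counit of $C_0^u(\G)$, respectively of $C_0(\G)$, to the almost periodic subalgebra and transport it across the isomorphism when the relevant map is injective, and use the characterisation of coamenability of a compact quantum group via the collapse of the universal--reduced surjection for the converse of (\ref{prop:some_amen_conds:one}). The only genuine divergence is in the converse of (\ref{prop:some_amen_conds:two}): the paper deduces injectivity of $\Phi$ from faithfulness of the Haar state on $\mathbb{AP}(C_0(\G))$ (via \cite[Theorem~2.2]{bmt}), whereas you deduce it from part (\ref{prop:some_amen_conds:one}) together with surjectivity of $\alpha$; both are valid, and your route has the small advantage of making explicit that this implication does not use coamenability of $\G$ at all (which is also true of the paper's argument, though it is not remarked upon there).
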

\begin{proof}
For (\ref{prop:some_amen_conds:one}) we note that $C_0^u(\G)$ always
admits a bounded counit $\epsilon_u$, see \cite[Section~4]{kus1}.
If $\Phi\circ\alpha$ is injective
then it's an isomorphism (as the dense Hopf $*$-algebras agree, see
discussion around Definition~\ref{defn:curlyap_of_g}). Thus the restriction
of $\epsilon_u$ to $\mathbb{AP}(C_0^u(\G))$ induces a bounded counit on
$C(\G^\sap)$ and so $\G^\sap$ is coamenable.  Conversely, if $\G^\sap$ is
coamenable then $C^u(\G^\sap) = C(\G^\sap)$ (see \cite[Theorem~2.2]{bmt})
and so as the canonical surjection $C^u(\G^\sap) \rightarrow C(\G^\sap)$
factors through $\Phi\circ\alpha$, it follows that $\Phi\circ\alpha$
is injective.

For (\ref{prop:some_amen_conds:two}), let $\epsilon\in C_0(\G)^*$ be the
counit, which exists as $\G$ is coamenable.
If $\Phi$ is injective, then it is an isomorphism, and the restriction
of $\epsilon$ to $\mathbb{AP}(C_0(G)) \cong C(\G^\sap)$ defines a bounded
counit on $C(\G^\sap)$, showing that $\G^\sap$ is coamenable.
Conversely, if $\G^\sap$ is coamenable then by \cite[Theorem~2.2]{bmt} the
Haar state on $\mathbb{AP}(C_0(G))$ is faithful and so $\Phi$ is injective.
\end{proof}

\subsection{Compact quantum groups}

The whole theory is designed to ensure that if $\G$ is a compact quantum
group, then it is its own compactification (compare
\cite[Section~4.3]{soltan}).  Of interest here are links with
Section~\ref{sec:ba_sec}.  The following is an improvement upon
Theorem~\ref{thm:pss_ap}, in that we make no assumption about the antipode.
The proof is very similar to \cite[Theorem~2.6(2)]{woro2}, where it is
shown that, when $\G$ is compact, if $a\in\mc{P}^\infty(\G) \cap C_0(\G)$, 
then $a\in\mc{AP}(C_0(\G))$.  We give the details, because the argument is
not long, and makes an interesting link with Theorem~\ref{thm:pss_ap}.
Our proof avoids use of $L^2(\G)$, and so \emph{maybe} holds promise of
extension to the non-compact case.

\begin{theorem}\label{thm:ss_okay_cmpt}
Let $\G$ be compact.  Then $\mc{P}^\infty(\G) = \mc{AP}(C_0(\G))$.
\end{theorem}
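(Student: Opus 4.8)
The inclusion $\mc{AP}(C_0(\G)) \subseteq \mc{P}^\infty(\G)$ is immediate, since a matrix element of a finite-dimensional admissible corepresentation has finite-rank coproduct. For the reverse inclusion, fix $x \in \mc{P}^\infty(\G)$. The plan is to rerun the construction from the proof of Theorem~\ref{thm:pss_ap}, retaining only the steps that do not mention the antipode, and then to let compactness supply what the hypothesis $x\in D(S)\cap D(S^{-1})$ supplied there. Thus write $\Delta(x) = \sum_{i=1}^n x_i \otimes y_i$ with $\{x_i\}$ and $\{y_i\}$ each linearly independent (choose the $x_i$ to span the image of $R_x$ and the $y_i$ to be a dual system), obtaining $U = (U_{ij}) \in \mathbb{M}_n(L^\infty(\G))$ with
\[ \Delta(U_{ij}) = \sum_k U_{ik} \otimes U_{kj}, \quad \Delta(y_i) = \sum_j U_{ij} \otimes y_j, \quad \Delta(x_i) = \sum_j x_j \otimes U_{ji}, \]
together with the non-degeneracy of the representation $L^1(\G) \to \mathbb{M}_n$, $\omega \mapsto (\ip{U_{ij}}{\omega})_{i,j}$; none of this uses the antipode. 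By \cite[Theorems~4.7 and~4.9]{bds} it follows that $U$ is invertible with $U_{ij} \in M(C_0(\G))$ for all $i,j$, and since $\G$ is compact $M(C_0(\G)) = C(\G)$, so $U$ is a finite-dimensional invertible corepresentation of the compact quantum group $\G$.

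Here compactness enters: a finite-dimensional invertible corepresentation of a compact quantum group is equivalent to a unitary one, by averaging the inner product against the Haar state (used only as a faithful normal state on $L^\infty(\G)$, not via its GNS space). Hence $U$ is admissible, its matrix elements lie in the dense Hopf $*$-algebra $\mathcal A := \mc{AP}(C_0(\G))$, and $\mathcal A$ is stable under $S$ and $S^{-1}$; in particular $U_{ij}, U_{ij}^* \in D(S)$ for all $i,j$.

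It remains to transfer this regularity to $x$. Once one knows that $x_i, y_i \in D(S)$ for all $i$, the endgame of the proof of Theorem~\ref{thm:pss_ap} goes through unchanged: Corollary~\ref{corr:multsresult} (whose hypothesis needs only the $x_i$ in $D(S)$) gives $1 \otimes x = \sum_i (S(x_i) \otimes 1)\Delta(y_i) = \sum_{i,j} S(x_i) U_{ij} \otimes y_j$, so $x \in \lin\{y_j\}$; then Proposition~\ref{prop:multsresult} applied to each $y_i^*$ gives $y_i \in \lin\{U_{ik} : k\}$, so $x$ is a matrix element of the admissible corepresentation $U$, i.e. $x \in \mc{AP}(C_0(\G))$. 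Equivalently, the whole theorem reduces to the assertion that $\mc{P}^\infty(\G) \subseteq D(S)$ for compact $\G$ (the inclusion $\mc{P}^\infty(\G)\subseteq D(S^{-1})$ following from $S^{-1}=*\circ S\circ *$ and the fact that $x^*\in\mc{P}^\infty(\G)$), whereupon Theorem~\ref{thm:pss_ap} applies directly. To obtain $x_i, y_i \in D(S)$ one exploits that their one-sided orbits already lie in $\mathcal A$: the displayed relations give $\omega \star y_i \in \lin\{U_{ij} : j\} \subseteq \mathcal A$ and $x_i \star \omega \in \lin\{U_{ji} : j\} \subseteq \mathcal A$ for every $\omega \in L^1(\G)$, so that the left orbit of each $y_i$ and the right orbit of each $x_i$ sit inside a single finite-dimensional subalgebra of $\mathcal A$. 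One then argues as in Woronowicz's treatment of \cite[Theorem~2.6(2)]{woro2}: an element of $L^\infty(\G)$ whose orbit spans a finite-dimensional subspace of $\mathcal A$ must belong to $\mathcal A$, which follows by feeding the orbit into the spectral (Peter--Weyl) decomposition of $\mathrm{Pol}(\G)$, transported to $L^\infty(\G)$ by the block idempotents of $L^1(\G)$, so as to isolate the element inside finitely many corepresentation blocks.

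The main obstacle is precisely this last transfer. The algebraic identities linking $x$, the $x_i$, the $y_i$ and $U$ form a closed system and never by themselves force membership in $\mathcal A$, so a genuinely analytic input about compact quantum groups is unavoidable; it plays, for the compact case, the role that $x\in D(S)\cap D(S^{-1})$ played in Theorem~\ref{thm:pss_ap}. (As a sanity check, when $\G$ is moreover coamenable the step is transparent: a bounded approximate identity $(e_\alpha)$ for $L^1(\G)$ gives $e_\alpha \star y_i \to y_i$, so $y_i$ lies in the norm-closed, because finite-dimensional, subspace $\lin\{U_{ij} : j\} \subseteq \mathcal A$; the substance of the theorem is that this conclusion survives without coamenability.)
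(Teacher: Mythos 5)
Your reduction of the theorem to the claim that the legs of $\Delta(x)$ lie in $D(S)\cap D(S^{-1})$ is exactly the right one, but the proposal does not actually prove that claim, and the two steps you offer in its place both have problems. First, you invoke \cite[Theorem~4.7]{bds} to get invertibility of $U$ from non-degeneracy alone; but the way that result is applied in the proof of Theorem~\ref{thm:pss_ap} (and in Remark~\ref{rem:one}), the entries of $U$ are first placed in $D(S)$ --- via the choice $\omega_i,\tau_i\in L^1_\sharp(\G)$ and Lemma~\ref{lem:slices_dom_s} --- before invertibility, with inverse $(S(U_{ij}))$, is deduced. Your antipode-free $U$ has no such property, so this application is circular: membership of the $U_{ij}$ in $D(S)$ is essentially what you are trying to establish. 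Without invertibility you also cannot place $U_{ij}$ in $M(C_0(\G))=C(\G)$ via \cite[Theorem~4.9]{bds}, and the unitarization step stalls as well: averaging $U^*U$ against the Haar state (faithful since we are on $L^\infty(\G)$, and the averaged matrix is invertible by your non-degeneracy argument) only yields an \emph{isometric} corepresentation after conjugation, and upgrading an isometry to a unitary in $\mathbb M_n(C(\G))$ again uses invertibility of $U$. Second, and more seriously, the passage from ``$U_{ij}\in\mc{AP}(C_0(\G))$'' to ``$y_i\in\mc{AP}(C_0(\G))$'' (equivalently $x_i,y_i\in D(S)$) is the entire analytic content of the theorem, and you leave it as a one-sentence sketch about Peter--Weyl block idempotents while yourself calling it ``the main obstacle''. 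As your coamenable sanity check concedes, ``every orbit $\omega\star y_i$ lies in a fixed finite-dimensional subspace of $\mc{AP}(C_0(\G))$'' does not by itself force $y_i$ into that subspace; making the block-idempotent argument rigorous would require the faithfulness and invariance properties of the Haar state in a form you have not supplied.

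The paper closes this gap by a short direct computation with the Haar state $\varphi$ that never constructs $U$ first. Writing $\Delta(x)=\sum_i x_i\otimes y_i$ and choosing $a_i\in L^\infty(\G)$ with $\varphi(a_i^*y_j)=\delta_{ij}$ (possible because the functionals $\varphi(a^*\,\cdot\,)$ are dense in $L^1(\G)$, $\varphi$ being KMS), the strong-invariance formula
\[ S\bigl((\iota\otimes\varphi)(\Delta(a^*)(1\otimes b))\bigr)
   =(\iota\otimes\varphi)\bigl((1\otimes a^*)\Delta(b)\bigr) \]
exhibits each $x_i$ as $S$ applied to an explicit element, whence $x_i\in D(S^{-1})$; replacing $x$ by $x^*$ gives $x_i\in D(S)$, and passing to $\G^\op$ handles the $y_i$. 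With $x_i,y_i\in D(S)\cap D(S)^*$ in hand, the proof of Theorem~\ref{thm:pss_ap} runs as written, all invocations of \cite{bds} now being legitimate. Any repair of your Peter--Weyl route would have to import this same input (strong invariance, or faithfulness of $\varphi$ together with the $L^2$-decomposition), so the direct argument is both shorter and more transparent about where compactness enters.
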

\begin{proof}
It suffices to show that $x\in \mc{P}^\infty(\G)$ is in $\mc{AP}(C_0(\G))$.
Let $\varphi$ be the (normal) Haar state on $L^\infty(\G)$.
By \cite[Section~1]{kvvn}, compare also \cite[Theorem~2.6(4)]{woro2},
we know that for $a,b\in L^\infty(\G)$,
\[ (\iota\otimes\varphi)(\Delta(a^*)(1\otimes b) \in D(S), \quad
S\big( (\iota\otimes\varphi)(\Delta(a^*)(1\otimes b)) \big)
= (\iota\otimes\varphi)((1\otimes a^*)\Delta(b)). \]
For $a\in L^\infty(\G)$, let $\omega_a\in L^1(\G)$ be the functional
$\ip{b}{\omega_a} = \varphi(a^*b)$ for $b\in L^\infty(\G)$.  As $\varphi$
is a KMS state, such functionals are dense in $L^1(\G)$.

That $x\in\mc{P}^\infty(\G)$ means that $\Delta(x)=\sum_{i=1}^n x_i\otimes y_i$
with $\{x_i\}$ and $\{y_i\}$ linearly independent sets.  Arguing as in the
proof of Theorem~\ref{thm:pss_ap}, we can find $(a_i)\subseteq L^\infty(\G)$
such that $\ip{y_j}{\omega_{a_i}} = \delta_{ij}$.  Thus for each $i$,
\[ S\big( (\iota\otimes\varphi)(\Delta(a_i^*)(1\otimes x)) \big)
= (\iota\otimes\varphi)((1\otimes a_i^*)\Delta(x))
= \sum_j x_j \ip{y_j}{\omega_{a_i}}
= x_i. \]
So $x_i \in D(S^{-1})=D(S)^*$.  Applying the same argument to $x^*$ shows
that $x_i \in D(S)$.  Applying the same argument to $\G^\op$ shows that
$y_i \in D(S)\cap D(S)^*$ for all $i$.

A close examination of the proof of Theorem~\ref{thm:pss_ap} shows that
knowing that $x_i,y_i\in D(S)\cap D(S)^*$ for all $i$ is enough for the
proof to work, and so $x\in\mc{AP}(C_0(\G))$, as required.
\end{proof}

We remark that Woronowicz asked in \cite{woro2} if it was essential to
focus on \emph{reduced} compact quantum groups for this result hold.
This was answered affirmatively in \cite[Remark~9.6]{ks}; in our language,
a compact quantum group $(A,\Delta)$ is constructed, and an element $a\in A$
is found, such that $\Delta(a)$ is a finite-rank tensor in $A\odot A$,
but $a \not\in \mc{AP}(A)$.

\subsection{Discrete quantum groups}

Discrete quantum groups were extensively studied in \cite{soltan}.
An important tool is the \emph{canonical Kac quotient} of a compact
quantum group, an idea attributed to Vaes.  Given a compact quantum
group $(A,\Delta)$, let $I$ be the closed ideal formed of all $a\in A$
such that $\tau(a^*a)=0$ for all traces $\tau$.  If $A$ admits no traces,
set $I=A$.  Let $A_\kac=A/I$ with $\pi:A\rightarrow A_\kac$ the quotient map.
Then
\[ \Delta_\kac(a+I) = (\pi\otimes\pi)\Delta(a) \qquad (a\in A), \]
is well-defined, and $(A_\kac,\Delta_\kac)$ becomes a compact quantum group.
It turns out that the Haar state is tracial, and so $(A,\Delta_\kac)$ is
a Kac algebra.  In particular, the dual of $(A,\Delta_\kac)$ is a unimodular
discrete quantum group, and hence has a bounded antipode.  All this is
explained in \cite[Appendix~A]{soltan}.

Let us now adapt the argument given in \cite[Section~4.3]{soltan} and
single out a key idea.  Let $\G$ be discrete and set
$(A,\Delta) = (C^u(\hat\G),\Delta^u_{\hat\G})$.  Then $(A_\kac,\Delta_\kac)$
is a compact Kac algebra, and so also its universal form, say $(C^u(\hat\H),
\Delta^u_{\hat\H})$ is Kac.  Let $\pi_u:C^u(\hat\G) \rightarrow
C^u(\hat\H)$ be the unique lift of $\pi:A\rightarrow A_\kac$, and let
$\hat\pi:C_0(\H) \rightarrow C_0(\G)$ be the dual
(recall that $\G$ and $\H$ are discrete, and so $C_0^u(\H)=C_0(\H)$
and so forth).

\begin{proposition}
For all $n$, the map $V\mapsto (\hat\pi\otimes\iota)(V)$ gives a surjection
from the set of $n$-dimensional unitary corepresentations
$V\in M(C_0(\H))\otimes\mathbb M_n$ to the set of
$n$-dimensional unitary corepresentations of $C_0(\G)$.
\end{proposition}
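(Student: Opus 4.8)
The plan is to dualise and reduce the statement to one about factoring finite-dimensional $*$-representations through the canonical Kac quotient. Since $\G$ and $\H$ are discrete, the duals $\hat\G,\hat\H$ are compact, and there is the standard bijective correspondence between $n$-dimensional unitary corepresentations of $C_0(\G)$ and $n$-dimensional $*$-representations of $A=C^u(\hat\G)$ (automatically unital, $\mathbb M_n$ being unital); this is the familiar correspondence between representations of a compact quantum group and corepresentations of its discrete dual, implemented by slicing the universal bicharacter $\mc U_\G$ of $\G$, compare the description of $\mc B_u$ in the proof of Proposition~\ref{prop:same_red_uni} and \cite[Proposition~4.2]{mrw}, \cite[Section~5]{kus1}. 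The same holds for $\H$ and $C^u(\hat\H)$. Because $\hat\pi$ is the dual morphism of $\pi_u$, these correspondences intertwine the map $V\mapsto(\hat\pi\otimes\iota)(V)$ with precomposition $\psi\mapsto\psi\circ\pi_u$: the two bicharacters in $M(C_0(\G)\otimes C^u(\hat\H))$ obtained from $\mc U_\H$ by slicing $\hat\pi$ into the left leg and from $\mc U_\G$ by slicing $\pi_u$ into the right leg coincide. That $(\hat\pi\otimes\iota)(V)$ is again a unitary corepresentation is clear since $\hat\pi$ is a Hopf $*$-homomorphism, compare Remark~\ref{rem:uni_add_rep}. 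So it suffices to show that every unital $*$-homomorphism $\rho:A\rightarrow\mathbb M_n$ factors as $\rho=\psi\circ\pi_u$ for some $\psi:C^u(\hat\H)\rightarrow\mathbb M_n$.

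Given such a $\rho$, the key observation is that its range $\rho(A)\subseteq\mathbb M_n$ is a finite-dimensional C$^*$-algebra, hence carries a faithful tracial state $\tau_0$, and then $\tau_0\circ\rho$ is a trace on $A$. By the very definition of the canonical Kac quotient, the ideal $I\subseteq A$ satisfies $I\subseteq\{a\in A:\tau_0(\rho(a)^*\rho(a))=0\}=\ker\rho$, the last equality since $\tau_0$ is faithful on $\rho(A)$. Hence $\rho$ drops to a unital $*$-homomorphism $\bar\rho:A_\kac\rightarrow\mathbb M_n$ with $\bar\rho\circ\pi=\rho$, where $\pi:A\rightarrow A_\kac$ is the quotient map. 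I would then compose $\bar\rho$ with the surjection $\Lambda^u_{A_\kac}:C^u(\hat\H)\rightarrow A_\kac$ coming from the universal form of the compact Kac algebra $(A_\kac,\Delta_\kac)$, and set $\psi=\bar\rho\circ\Lambda^u_{A_\kac}$.

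It then remains to identify $\psi\circ\pi_u$ with $\rho$, i.e.\ to check $\Lambda^u_{A_\kac}\circ\pi_u=\pi$. Both sides are Hopf $*$-homomorphisms $C^u(\hat\G)\rightarrow A_\kac$. Taking $\pi_u$ to be the lift of $\Lambda^r_{A_\kac}\circ\pi$ furnished by Proposition~\ref{prop:uni_lifts}, so that $\Lambda_{\hat\H}\circ\pi_u=\Lambda^r_{A_\kac}\circ\pi$, and recalling $\Lambda^r_{A_\kac}\circ\Lambda^u_{A_\kac}=\Lambda_{\hat\H}$ (the commuting triangle of Section~\ref{sec:mor_lifts}), we obtain $\Lambda^r_{A_\kac}\circ(\Lambda^u_{A_\kac}\circ\pi_u)=\Lambda^r_{A_\kac}\circ\pi$. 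Lemma~\ref{lem:uni_lift_cqg}, applied to the compact quantum group $(A_\kac,\Delta_\kac)$ and its reducing morphism $\Lambda^r_{A_\kac}$, then forces $\Lambda^u_{A_\kac}\circ\pi_u=\pi$. Therefore $\psi\circ\pi_u=\bar\rho\circ\Lambda^u_{A_\kac}\circ\pi_u=\bar\rho\circ\pi=\rho$, as needed.

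The conceptual heart — that a finite-dimensional $*$-representation factors through the Kac quotient because its range carries a faithful trace — is short, and the standard dictionary between finite-dimensional unitary corepresentations of a discrete quantum group and representations of its compact dual I would quote rather than reprove. The step I expect to need the most care is the first paragraph's bookkeeping: verifying precisely, via the morphism calculus of \cite{mrw,kus1}, that pushing a corepresentation forward along $\hat\pi$ corresponds to precomposing the associated representation of the dual with $\pi_u$, and keeping straight which leg of which universal bicharacter one slices (there is a transpose lurking in the compact-quantum-group/discrete-dual dictionary, which is harmless for surjectivity but must be acknowledged). The closing identity $\Lambda^u_{A_\kac}\circ\pi_u=\pi$ is also slightly delicate, since it relies on the uniqueness assertion of Lemma~\ref{lem:uni_lift_cqg} rather than being immediate from the construction of $\pi_u$.
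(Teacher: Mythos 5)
Your proof is correct and follows essentially the same route as the paper: identify unitary corepresentations of $C_0(\G)$ with finite-dimensional $*$-representations of $C^u(\hat\G)$ via the universal bicharacter, use the faithful trace on $\mathbb M_n$ to factor such a representation through the canonical Kac quotient, and pull back along $\Lambda^u_{A_\kac}$. The only (harmless) difference is that the paper takes $\Lambda^u_{A_\kac}\circ\pi_u=\pi$ as the defining property of $\pi_u$ being ``the unique lift of $\pi$'', whereas you derive it from Lemma~\ref{lem:uni_lift_cqg}; your extra care there, and in spelling out why the trace argument gives $I\subseteq\ker\rho$, is welcome but not a change of method.
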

\begin{proof}
As $\hat\pi$ is a Hopf $*$-homomorphism, we need only prove surjectivity of
the map; namely, that if $U\in M(C_0(\G))\otimes\mathbb M_n$ is a unitary
corepresentation, then $U=(\hat\pi\otimes\iota)(V)$ for some suitable $V$.
Recall again the work of Kustermans in \cite{kus1}.  There is a unique
$*$-homomorphism $\phi:A\rightarrow\mathbb M_n$ with
$U = (\iota\otimes\phi)(\hat{\mc V}_{\G})$.  Furthermore, as $\G$ and $\H$
are discrete,
\[ \mc U_{\G} = \hat{\mc V}_{\G},\mc U_{\H} = \hat{\mc V}_{\H}
\implies (\iota\otimes\pi_u)(\hat{\mc V}_{\G})
= (\hat\pi\otimes\iota)(\hat{\mc V}_{\H}). \]
As $\mathbb M_n$ has a faithful trace, it is easy to see that there is
a unique $*$-homomorphism $\phi_0:A_\kac\rightarrow\mathbb M_n$ with
$\phi_0\circ\pi = \phi$.  Recall the reducing morphism $\Lambda^u_{A_\kac}:
C^u(\hat\H)\rightarrow A_\kac$, and set
$V = (\iota\otimes\phi_0\circ\Lambda^u_{A_\kac})
(\hat{\mc V}_\H)$, an $n$-dimensional unitary corepresentation of $C_0(\H)$.
Then
\[ (\hat\pi\otimes\iota)(V)
= (\iota\otimes\phi_0\circ\Lambda^u_{A_\kac}\circ\pi_u)(\hat{\mc V}_\G)
= (\iota\otimes\phi_0\circ\pi)(\hat{\mc V}_\G)
= U, \]
as required.
\end{proof}

Now, Corollary~\ref{corr:kacokay} shows that finite-dimensional
unitary corepresentations of $\H$ are automatically admissible (a result
not available to So\l tan) and so we get the following (which
So\l tan was able to prove by other means, see \cite[Theorem~4.5]{soltan}).
We will revisit this result below.

\begin{corollary}\label{corr:uni_implies_ad_disc}
Any finite-dimensional unitary corepresentation of a discrete quantum
group $\G$ is admissible.
\end{corollary}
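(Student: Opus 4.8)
The plan is to deduce admissibility by transporting the corepresentation to the dual $\H$ of the canonical Kac quotient, where the antipode is bounded and admissibility is automatic, and then pushing forward along the Hopf $*$-homomorphism $\hat\pi\colon C_0(\H)\to C_0(\G)$.

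First I would record that, by its very construction, $\H$ is the discrete dual of the compact Kac algebra $(A_\kac,\Delta_\kac)$; as explained in the discussion preceding the previous proposition, this dual is a unimodular discrete quantum group, so the antipode $S_\H$ coincides with the unitary antipode $R_\H$ and in particular is bounded, with $D(S_\H)=M(C_0(\H))$. Consequently Proposition~\ref{prop:admiss_iff_dsinv} applies trivially to $\H$: the condition ``$U_{ij}^*\in D(S_\H)$ for all $i,j$'' is automatically satisfied for any matrix over $M(C_0(\H))$, so every finite-dimensional unitary corepresentation of $C_0(\H)$ is admissible. (This is precisely the observation recorded in the remark immediately after Proposition~\ref{prop:admiss_iff_dsinv}.)

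Next, let $U\in M(C_0(\G))\otimes\mathbb M_n$ be a finite-dimensional unitary corepresentation (by Remark~\ref{rem:uni_add_rep} it is no loss of generality to restrict to the unitary case, which is in any event what is being claimed). By the surjectivity statement in the proposition just proved, there is a finite-dimensional unitary corepresentation $V\in M(C_0(\H))\otimes\mathbb M_n$ with $(\hat\pi\otimes\iota)(V)=U$. By the previous paragraph $V$ is admissible, and since $\hat\pi\colon C_0(\H)\to M(C_0(\G))$ is a Hopf $*$-homomorphism, Remark~\ref{rem:why_univ_prop} shows that $U=(\hat\pi\otimes\iota)(V)$ is again admissible, which is the assertion. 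One could instead note that the matrix elements of $V$ are periodic, hence lie in $\mc{P}^\infty(\H)=\mc{AP}(C_0(\H))$ by Corollary~\ref{corr:kacokay}, and that $\hat\pi$ maps $\mc{AP}(C_0(\H))$ into $\mc{AP}(C_0(\G))$; but extracting admissibility of $U$ itself (rather than merely membership of its entries in $\mc{AP}$) is most cleanly done through Proposition~\ref{prop:admiss_iff_dsinv} and Remark~\ref{rem:why_univ_prop} as above.

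The only place requiring any care — and so the ``main obstacle'', although a mild one — is the first step: one must be certain that the quantum group $\H$ produced by the previous proposition genuinely has a bounded antipode, i.e.\ that forming the canonical Kac quotient and then dualising yields a unimodular discrete quantum group. Granting this (it is contained in the cited discussion of the canonical Kac quotient), the remainder is formal bookkeeping: $\H$ being discrete means $C_0(\H)=C_0^u(\H)$, so no reduced/universal distinction intervenes, and Remarks~\ref{rem:uni_add_rep} and \ref{rem:why_univ_prop} supply everything else.
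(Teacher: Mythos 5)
Your argument is correct and follows essentially the same route as the paper: lift $U$ along the surjection of the preceding proposition to a unitary corepresentation $V$ of $C_0(\H)$, observe that $V$ is automatically admissible because $\H$ is a (discrete, unimodular) Kac algebra with bounded antipode, and push admissibility forward along the Hopf $*$-homomorphism $\hat\pi$. The only cosmetic difference is in the last step, where the paper transports the similarity of $\overline{V}$ to a unitary corepresentation rather than citing Remark~\ref{rem:why_univ_prop} directly, but the two justifications amount to the same calculation.
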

\begin{proof}
Let $U$ be a finite-dimension unitary corepresentation of $C_0(\G)$.
Then $U = (\hat\pi\otimes\iota)(V)$ for an (automatically) admissible
unitary corepresentation $V$ of $C_0(\H)$.  Then, as in the proof of
Proposition~\ref{prop:same_red_uni}, $\overline{V}$ is similar to a
unitary corepresentation, say $X$.  A simple calculation then shows that
$\overline{U} = (\hat\pi\otimes\iota)(\overline{V})$ is similar to
$(\hat\pi\otimes\iota)(X)$ and hence admissible.
\end{proof}

\section{Open questions}

The most interesting open question seems to be:

\begin{conjecture}\label{conj:one}
For any $\G$, we have that $\mc{P}^\infty(\G) = \mc{AP}(\G)$.
\end{conjecture}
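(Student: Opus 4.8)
The plan is to remove the antipode hypothesis from Theorem~\ref{thm:pss_ap} by pushing the argument of Theorem~\ref{thm:ss_okay_cmpt} beyond the compact case. Combining the Corollary after Theorem~\ref{thm:pss_ap} (which states $\mc{AP}(C_0(\G)) = \mc{P}^\infty(\G)\cap D(S)\cap D(S^{-1})$) with Proposition~\ref{prop:one} and Lemma~\ref{lem:in_ap_in_ss}, one sees that the conjecture is \emph{equivalent} to the purely analytic inclusion
\[ \mc{P}^\infty(\G) \subseteq D(S)\cap D(S^{-1}); \]
that is, a periodic element automatically lies in the domain of the (unbounded) antipode and of its inverse. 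Given $x\in\mc{P}^\infty(\G)$ with $\Delta(x)=\sum_{i=1}^r x_i\otimes y_i$, with $\{x_i\}$ and $\{y_i\}$ linearly independent, a routine slicing of $\Delta^2(x)$ against dual bases in $L^1(\G)$ produces, from coassociativity alone, a bounded finite-dimensional corepresentation $C=(c_{ij})$ of $L^\infty(\G)$ of which $x_1,\dots,x_r,y_1,\dots,y_r$ are matrix coefficients; moreover, once $x_i\in D(S)$ is known, Corollary~\ref{corr:multsresult} gives $x\in\lin\{y_i\}$, so $x$ too is a matrix coefficient of $C$. By the examination of the proof of Theorem~\ref{thm:pss_ap} carried out at the end of Theorem~\ref{thm:ss_okay_cmpt}, together with Proposition~\ref{prop:admiss_iff_dsinv}, it therefore suffices to prove that $C$ is admissible.

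To obtain admissibility one would like to invoke the strong left invariance identity of \cite{kvvn} used in Theorem~\ref{thm:ss_okay_cmpt}, namely $S\big((\iota\otimes\varphi)((1\otimes a^*)\Delta(b))\big)=(\iota\otimes\varphi)(\Delta(a^*)(1\otimes b))$, to show that suitable slices of $\Delta(x)$ lie in $D(S)$, and its $\G^\op$ counterpart to reach $D(S)^*$. In the compact case this works because the Haar state $\varphi$ is bounded, so $\mf N_\varphi = L^\infty(\G)$. In the non-compact case $\varphi$ is an unbounded n.s.f.\ weight, the identity requires $a,b\in\mf N_\varphi$, and a periodic element need not be $\varphi$-square-integrable: already for $\G=\mathbb R$ the periodic function $e^{i\lambda\,\cdot}$ is not in $L^2(\mathbb R)$ (this case is harmless, $S$ being bounded there, but it shows that the compact-case technique cannot be transplanted verbatim). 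This is the main obstacle I anticipate. The obvious remedy, smearing $x$ along the scaling group $(\tau_t)$ -- which preserves $\mc{P}^\infty(\G)$ coefficientwise, since $\Delta\tau_t=(\tau_t\otimes\tau_t)\Delta$, and lands one in $D(S)$ -- fails, because an average $\int\tau_t(x)f(t)\,dt$ need no longer have a finite-rank coproduct, $\mc{P}^\infty(\G)$ not being a closed subspace and $\mc{AP}(\G)$ not being weak$^*$-closed; so one cannot regularise and pass to a limit. Some genuinely a priori argument is needed, presumably showing directly that $C$, or the von Neumann subalgebra it generates, ``lives on'' $\G^\sap$.

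This ties the conjecture closely to Conjecture~\ref{conj:two}: admissibility of arbitrary finite-dimensional invertible corepresentations fails in ${\sf CSBa}$ (Wang's examples, \cite[Remark~2.10]{soltan}, \cite[Section~4]{wang}) but is conjecturally automatic for genuine locally compact quantum groups, and the reduction above shows that Conjecture~\ref{conj:one} follows from it. A staged attack seems reasonable: (i) the conjecture holds whenever $S$ is bounded on the linear span of any periodic element, which covers Kac algebras (Corollary~\ref{corr:kacokay}) and, via Theorem~\ref{thm:ss_okay_cmpt}, compact quantum groups; (ii) for discrete quantum groups, upgrade Corollary~\ref{corr:uni_implies_ad_disc} from unitary corepresentations to periodic elements, by showing that the corepresentation $C$ extracted above pulls back, via the canonical Kac quotient of Section~\ref{sec:egs}, from a quantum group in which finite-dimensional corepresentations are automatically admissible; (iii) attempt the general case by decomposing along the scaling group and combining (i) and (ii). Steps (ii) and (iii) are where I expect the real difficulty to concentrate.
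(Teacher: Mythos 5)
The statement you are addressing is Conjecture~\ref{conj:one}, which the paper leaves \emph{open}: there is no proof of it anywhere in the paper, only the remark that the natural line of attack is to show $\mc{P}^\infty(\G)\subseteq D(S)\cap D(S^{-1})$, which Theorem~\ref{thm:ss_okay_cmpt} achieves for compact $\G$. Your proposal correctly rediscovers exactly this reduction (via the Corollary to Theorem~\ref{thm:pss_ap} together with Lemma~\ref{lem:in_ap_in_ss} and Proposition~\ref{prop:one}), and correctly identifies why the compact-case argument does not transplant: the strong left invariance identity needs the Haar weight, which is an unbounded n.s.f.\ weight in general, and a periodic element need not be square-integrable (your $e^{i\lambda\,\cdot}$ example on $\mathbb R$ is apt). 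But what you have written is a research programme, not a proof. Steps (ii) and (iii) of your staged attack are explicitly left open, and no argument is supplied for the crucial inclusion $\mc{P}^\infty(\G)\subseteq D(S)\cap D(S^{-1})$ beyond the cases (Kac algebras, compact quantum groups) already covered by Corollary~\ref{corr:kacokay} and Theorem~\ref{thm:ss_okay_cmpt}. The statement remains a conjecture.

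There is also one specific unjustified claim: that Conjecture~\ref{conj:one} \emph{follows from} Conjecture~\ref{conj:two}. The paper presents Conjecture~\ref{conj:two} as the \emph{weaker} statement, i.e.\ the known implication runs the other way. Your reduction produces from a periodic $x$ only a bounded finite-dimensional corepresentation $C$ with entries in $L^\infty(\G)$; to invoke Conjecture~\ref{conj:two} you would need $C$ to be (similar to) a \emph{unitary} corepresentation of $C_0(\G)$, that is, invertible with entries in $M(C_0(\G))$. The paper obtains invertibility and membership of $M(C_0(\G))$ only \emph{after} assuming $x\in D(S)$ (Remark~\ref{rem:one}, via \cite[Theorem~4.7]{bds}, whose inverse formula $(S(U_{ij}))$ presupposes domain membership), which is precisely what is missing in the general case. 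Relatedly, ``from coassociativity alone'' you get only that the $y_i$ span a subcomodule, $\Delta(y_i)=\sum_j U_{ij}\otimes y_j$; concluding that $x$ and the $y_i$ are themselves matrix coefficients of $C$ again uses Proposition~\ref{prop:multsresult} and Corollary~\ref{corr:multsresult}, hence the antipode. So the circle cannot be closed this way without new input.
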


One obvious attack is suggested by Theorem~\ref{thm:pss_ap}: show that if
$x\in\mc{P}^\infty(\G)$ then automatically $x\in D(S)\cap D(S)^*$.
Woronowicz's argument, Theorem~\ref{thm:ss_okay_cmpt}, shows that this is
true for compact $\G$.

In the extreme case of one-dimensional corepresentations, the answer is
also affirmative.  To be precise, if $x\in L^\infty(\G)$ is a corepresentation,
meaning that $\Delta(x)=x\otimes x$, then automatically $x$ is unitary (and so
$x\in D(S)\cap D(S)^*$).  Two independent proofs are given in
\cite[Theorem~3.2]{dl} and \cite[Theorem~3.9]{kn}, but both proofs use,
for example, that also $x^*x$ is a
character, and there seems little hope of extending these sorts of arguments
to more general periodic elements.

A weaker conjecture is the following:

\begin{conjecture}\label{conj:two}
For any $\G$, the finite-dimensional unitary corepresentations of
$C_0(\G)$ are admissible.
\end{conjecture}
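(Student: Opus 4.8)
The plan is to reduce the conjecture to a statement purely about the scaling group, and then to try to transplant the canonical Kac quotient argument that handles the discrete case in Corollary~\ref{corr:uni_implies_ad_disc}. Let $U=(U_{ij})\in\mathbb M_n(M(C_0(\G)))$ be a finite-dimensional unitary corepresentation. The computation at the start of the proof of Proposition~\ref{prop:admiss_iff_dsinv} already gives $U_{ij}\in D(S)$ with $S(U_{ij})=U_{ji}^*$, so, writing $S=R\tau_{-i/2}$ with $R$ bounded, admissibility of $U$ is by Proposition~\ref{prop:admiss_iff_dsinv} equivalent to $U_{ij}\in D(S^{-1})=D(S)^*$ as well, i.e. to each $U_{ij}$ lying in the domain of $\tau_{i/2}$ in addition to that of $\tau_{-i/2}$. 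The cleanest way to secure this is to show that the finite-dimensional space $X=\lin\{U_{ij}\}$ is invariant under the scaling group $(\tau_t)$: then $t\mapsto\tau_t|_X$ is a norm-continuous one-parameter group on a finite-dimensional space, hence of the form $e^{itA}$ for a matrix $A$, so it extends to an entire action of $\mathbb C$ and places every $U_{ij}$ in $D(\tau_z)$ for all $z$, and in particular in $D(S)\cap D(S^{-1})$.

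To prove scaling-invariance of $X$ I would pass to the dual picture. Every finite-dimensional unitary corepresentation of $C_0(\G)$ is of the form $U=(\iota\otimes\phi)(\hat{\mc V})$ for a unital $*$-representation $\phi\colon C_0^u(\hat\G)\to\mathbb M_n$, where $\hat{\mc V}\in M(C_0(\G)\otimes C_0^u(\hat\G))$ is the semi-universal form of $W$ appearing in the proof of Proposition~\ref{prop:same_red_uni} (there written $\hat{\mc V}=(\Lambda_\G\otimes\iota)(\mc U)$). The scaling groups are intertwined through this unitary, $(\tau_t\otimes\iota)(\hat{\mc V})=(\iota\otimes\hat\tau^u_{-t})(\hat{\mc V})$, where $(\hat\tau^u_t)$ is the scaling group of $C_0^u(\hat\G)$; applying $(\iota\otimes\phi)$ gives $(\tau_t\otimes\iota)(U)=(\iota\otimes\phi\circ\hat\tau^u_{-t})(\hat{\mc V})$. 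Hence $X$ is $(\tau_t)$-invariant, in fact $\tau_t(U_{ij})=U_{ij}$ for all $t$, as soon as $\phi\circ\hat\tau^u_t=\phi$. Now, since $\mathbb M_n$ carries a faithful tracial state, $\phi$ annihilates every $x\in C_0^u(\hat\G)$ with $\rho(x^*x)=0$ for all tracial states $\rho$, and this ``trace-radical'' is a closed ideal which, because $\hat\tau^u_t$ is a $*$-automorphism and therefore permutes tracial states, is $(\hat\tau^u_t)$-invariant. One then wants the quotient $B$ of $C_0^u(\hat\G)$ by this ideal to be a genuine locally compact quantum group with the inherited coproduct, and to be of Kac type so that its scaling group is trivial; granting this, $\hat\tau^u_t$ descends to the identity on $B$, whence $\phi\circ\hat\tau^u_t=\phi$ and we are done. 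In the discrete case this is precisely the canonical Kac quotient of the compact dual used in Corollary~\ref{corr:uni_implies_ad_disc}, and in the cocommutative case it is trivial because the classical dual group is already Kac.

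The main obstacle is exactly this last step. A ``maximal Kac quotient'' of $C_0^u(\hat\G)$ is only developed in the literature for compact $\hat\G$ (equivalently, discrete $\G$); for a general locally compact quantum group it is unclear that the trace-radical ideal is a Hopf ideal, that the quotient is a locally compact quantum group at all, or, even if it is, that it is of Kac type rather than merely ``more unimodular''. Without the quotient being a quantum group, ``trivial scaling group'' has no content, and finite-dimensionality of the image $\phi(C_0^u(\hat\G))$ alone is not enough: a norm-continuous one-parameter $*$-automorphism group of a finite-dimensional C$^*$-algebra, even one preserving a faithful trace, need not be trivial. An alternative line would be to extend Woronowicz's argument from Theorem~\ref{thm:ss_okay_cmpt}, using the left Haar weight of $\G$ in place of a Haar state to obtain the required domain membership directly, but the matrix coefficients of $U$ (for instance the unit, coming from the trivial corepresentation) generally lie outside the manageable domain of the weight, so a genuinely new idea seems necessary. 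I would also keep in mind that Conjecture~\ref{conj:one} is strictly stronger, and that a counterexample to the present conjecture, should one exist, ought to be sought among strongly non-unimodular $\G$ — though the scarcity of finite-dimensional corepresentations there, as in the quantum $E(2)$ computation above, makes this seem unlikely.
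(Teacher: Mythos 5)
The statement you were given is labelled as a \emph{conjecture}: the paper offers no proof of it, only the partial cases (compact $\G$ from Woronowicz's work, Kac algebras from Corollary~\ref{corr:kacokay}, discrete $\G$ from Corollary~\ref{corr:uni_implies_ad_disc}) and, in the unlabelled proposition closing the paper, a reformulation as the statement that every finite-dimensional unitary corepresentation factors through a compact quantum group. So there is no proof in the paper to compare against, and your proposal, as you yourself acknowledge, does not settle the question either. What you have written largely retraces the paper's own discussion of why the problem is open: the reduction of admissibility to $U_{ij}\in D(S)\cap D(S^{-1})$, hence to analyticity for the scaling group, is exactly Proposition~\ref{prop:admiss_iff_dsinv} (and is the line of attack the author suggests for the stronger Conjecture~\ref{conj:one}), while the canonical Kac quotient of $C_0^u(\hat\G)$ is precisely the device behind Corollary~\ref{corr:uni_implies_ad_disc}, about which the paper explicitly warns that ``this technique is something special to the compact case''.

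Beyond the obstacle you flag, two further gaps are worth naming. First, for non-compact $\hat\G$ the algebra $C_0^u(\hat\G)$ is non-unital and may carry no tracial states at all, so your trace-radical could be the whole algebra; even when it is a proper ideal, nothing guarantees the quotient admits invariant weights, i.e.\ is a locally compact quantum group, let alone one of Kac type --- and without that, ``the scaling group descends to the identity'' has no content, as you partly observe. Second, for the $(\tau_t)$-invariance of $X=\lin\{U_{ij}\}$ you do not need $\phi\circ\hat\tau^u_t=\phi$ on the nose: a continuous family of unitary equivalences $\phi\circ\hat\tau^u_t=\mathrm{Ad}(u_t)\circ\phi$ would already make $X$ invariant and hence every $U_{ij}$ analytic for $(\tau_t)$; this weaker intermediate target might be a more realistic thing to aim for, but it is not established either. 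As it stands your text is an accurate map of the difficulty rather than a proof, and should be presented as such.
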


This is true for compact quantum groups from Woronowicz's work
(see \cite[Proposition~6.2]{woro2}, which is the key to showing that
the matrix elements of unitary corepresentations form a Hopf $*$-algebra,
in the compact case).  It is true for Kac algebras by
Corollary~\ref{corr:kacokay}, and is true for discrete quantum groups
by Corollary~\ref{corr:uni_implies_ad_disc}.  Recall that the final result
is proved by using the ``canonical Kac quotient'' of the dual: any
finite-dimensional $*$-representation of a compact quantum group factors
through a (compact) Kac algebra; but this technique is something special to the
compact case, and fails for discrete quantum groups, for example.
However, we wonder if some slightly different technique could be used to prove
the conjecture?  We note that in all computations of the quantum Bohr
compactification, one computes the finite-dimensional unitary corepresentations
of $C_0(\G)$ via computing the finite-dimensional $*$-representations of
$C_0^u(\hat\G)$, and then in each special case, it turns out that these
corepresentations are always admissible.

In the classical situation, consider the link between finite-dimensional
unitary representations $\pi$ of $G$ in $\mathbb M_n$, and group homomorphisms
from $G$ to compact groups.  Trivially, any such $\pi$ induces a group
homomorphism $G\rightarrow U(n)$; and the Peter-Weyl theory tells us that to
understand homomorphisms $G\rightarrow K$ for compact $K$,
it is enough (in some sense) to
know the finite-dimensional unitary representations of $G$.  This second point
was of course generalised by So{\l}tan in \cite{soltan}.  However, the first
point has links to Conjecture~\ref{conj:two}.  The following is easy to show,
as every finite-dimensional corepresentation of a compact quantum group is
admissible.

\begin{proposition}
Conjecture~\ref{conj:two} holds for $\G$ if and only if every
finite-dimensional unitary corepresentation $U$ of $\G$ factors through a
compact quantum group.
\end{proposition}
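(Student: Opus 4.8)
The plan is to prove both implications directly; the substance is just the combination of So\l tan's construction, Woronowicz's structure theory, and the fact (Remark~\ref{rem:why_univ_prop}) that admissibility of a corepresentation is inherited along Hopf $*$-homomorphisms. Throughout I read ``$U$ factors through a compact quantum group'' as: there is a compact quantum group $(B,\Delta_B)$, a Hopf $*$-homomorphism $\theta:B\to M(C_0(\G))$, and a unitary corepresentation $V\in\mathbb M_n(B)$ of $B$ with $(\theta\otimes\iota)(V)=U$.

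For the forward implication, I would assume Conjecture~\ref{conj:two} holds for $\G$ and take a finite-dimensional unitary corepresentation $U=(U_{ij})\in\mathbb M_n(M(C_0(\G)))$. By hypothesis $U$ is admissible, so \cite[Proposition~2.7]{soltan} applies: the C$^*$-algebra $B_U\subseteq M(C_0(\G))$ generated by the matrix elements of $U$, with the restriction of $\Delta$, is a compact quantum group, $\Delta$ restricts to a map $B_U\to B_U\otimes B_U$, and hence $U\in\mathbb M_n(B_U)$ is a unitary corepresentation of $B_U$. The inclusion $\theta:B_U\hookrightarrow M(C_0(\G))$ is a Hopf $*$-homomorphism with $(\theta\otimes\iota)(U)=U$, so $U$ factors through the compact quantum group $B_U$.

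For the converse, I would assume every finite-dimensional unitary corepresentation of $\G$ factors through a compact quantum group, and take such a $U$, say $U=(\theta\otimes\iota)(V)$ with $\theta:B\to M(C_0(\G))$ a Hopf $*$-homomorphism from a compact quantum group $B$ and $V\in\mathbb M_n(B)$ a unitary corepresentation of $B$. By Woronowicz's theory (\cite[Proposition~6.2]{woro2}; see also the discussion around \cite[Proposition~2.7]{soltan}) the conjugate corepresentation $\overline V=(V_{ij}^*)$ is similar to a unitary corepresentation, hence invertible; since $\overline V=(V^\top)^*$, the matrix $V^\top$ is invertible too, i.e.\ $V$ is admissible. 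Applying the observation of Remark~\ref{rem:why_univ_prop} to $\theta$ (concretely, $\overline U=(\theta\otimes\iota)(\overline V)$ has inverse $(\theta\otimes\iota)\bigl((\overline V)^{-1}\bigr)$), the corepresentation $U$ is admissible. As $U$ was arbitrary, Conjecture~\ref{conj:two} holds for $\G$.

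There is no serious obstacle here — the proposition is essentially a repackaging of results already in the excerpt. The only point to keep straight is fixing the meaning of ``factors through a compact quantum group'' and noticing that in the forward direction the algebra $B_U$ itself exhibits the required factorisation; in the reverse direction one needs $V$ to sit at a level where Woronowicz's structure theory applies, but since $V$ is a corepresentation of a genuine compact quantum group $B$ this is automatic (and there is no reduced/universal subtlety, as compactness gives $M(B)=B$).
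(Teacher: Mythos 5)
Your proof is correct and follows exactly the route the paper intends (the paper only gives the one-line hint that every finite-dimensional corepresentation of a compact quantum group is admissible): in the forward direction the admissible $U$ generates a compact quantum group $B_U$ via \cite[Proposition~2.7]{soltan} which supplies the factorisation, and in the converse direction admissibility of $V$ at the compact level transfers to $U$ along the Hopf $*$-homomorphism as in Remark~\ref{rem:why_univ_prop}. Both steps, including the identification $\overline V=(V^\top)^*$, are sound.
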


We remark that the quantum group analogues of the unitary groups are the
``universal'' quantum groups, in the sense of van Daele and Wang,
\cite{wang,vdw}.  If Conjecture~\ref{conj:two} is false, then there are
finite-dimensional unitary corepresentations of $\G$ which have nothing to
do with compact quantum groups: a very strange situation!

\vspace{5ex}

\noindent\emph{Author's Address:}
\parbox[t]{3in}{School of Mathematics\\
University of Leeds\\
Leeds\\
LS2 9JT}

\bigskip\noindent\emph{Email:} \texttt{matt.daws@cantab.net}

\end{document}